\documentclass[a4paper,11pt]{amsart}
\usepackage{amsmath, amssymb}
\usepackage{hyperref, xcolor}
\hypersetup{colorlinks=true,linkcolor=blue,citecolor=blue}

\usepackage[normalem]{ulem}
\usepackage[polish,english]{babel}
\usepackage{comment}
\usepackage{graphicx}
\usepackage{array}
\usepackage{tabularx}
\input cyracc.def

\numberwithin{equation}{section}
\newtheorem{thm}{Theorem}[section]

\newtheorem{prop}[thm]{Proposition}
\newtheorem{lem}[thm]{Lemma}
\newtheorem{cor}[thm]{Corollary}
\newtheorem{clm}[thm]{Claim}

\theoremstyle{definition}
\newtheorem{defn}[thm]{Definition}

\theoremstyle{remark}
\newtheorem{rem}[thm]{Remark}

\newcommand{\N}{\mathbb{N}}

\newcommand{\Q}{\mathbb{Q}}
\newcommand{\R}{\mathbb{R}}
\renewcommand{\P}{\mathcal{P}}
\newcommand{\E}{\mathcal{E}}
\newcommand{\uB}{{\underline{B}}}
\newcommand{\mux}{\mu_{X}}
\newcommand{\muy}{\mu_{Y}}

\newcommand{\AC}{\mathrm{AC}^2}

\newcommand{\Bu}{\mathcal B^{*}}

\newcommand{\defset}[2]{ \left\{ #1 : #2 \right\} }

\newcommand{\Entm}[2]{\ensuremath{{\rm Ent}}_{\mu_{#1}} (#2)}
\newcommand{\Ent}[2]{\ensuremath{{\rm Ent}}_{#1} (#2)}

\newcommand{\DomEnt}[1]{ D(\ensuremath{{\rm Ent}}_{\mu_{#1}}) }

\newcommand{\seqk}[1]{ \{#1\}_{k=1}^{\infty} }

\newcommand{\seqn}[1]{ \{#1\}_{n=1}^{\infty} }

\DeclareMathOperator{\supp}{supp}

\newcommand{\niN}{{n\in\N}}
\newcommand{\iiI}{{i\in I}}
\newcommand{\kti}{{k\to\infty}}

\newcommand{\limn}{\lim_{n\to\infty}}

\newcommand{\limsn}{\limsup_{n\to\infty}}

\newcommand{\limin}{\liminf_{n\to\infty}}

\newcommand{\ep}{\varepsilon}
\renewcommand{\phi}{\varphi}

\newcommand{\cB}{\mathcal{B}}

\newcommand{\lip}{\mathcal{L}ip}

\newcommand{\Lip}{{\rm Lip}}

\newcommand{\fstop}{\,.}
\newcommand{\comma}{\, ,}
\newcommand{\Ch}{{\sf Ch}}
\newcommand{\CD}{{\sf CD}}
\newcommand{\RCD}{{\sf RCD}}
\newcommand{\EVI}{{\sf EVI}}

\newcommand{\LS}{{\sf LS}}
\newcommand{\PI}{{\sf PI}}
\newcommand{\LH}{{\sf LH}}
\newcommand{\DFH}{{\sf H}_\infty}

\newcommand{\TL}{{\sf T}}

\newcommand{\ENT}{{\rm Ent}}

\renewcommand{\c}{\mathrm{c}}

\newcommand{\diff}{\mathop{}\!\mathrm{d}}	
\newcommand{\Pac}{\mathcal P_*}
\newcommand{\F}{\mathcal F}
\newcommand{\e}{\varepsilon}

\newcommand{\SL}[2]{\ensuremath{{D}}_{#1} (#2)}

\usepackage[usenames,dvipsnames]{xcolor}
\newcommand{\purple}{\textcolor{purple}}
\newcommand{\blue}{\textcolor{blue}}
\newcommand{\ilim}{\varprojlim}
\newcommand{\dlim}{\varinjlim}
\newcommand{\Cpl}{{\rm Cpl}}

\title[Curvature Bounds on Inverse Limit]{Stability of Synthetic Ricci Curvature Lower Bounds for Inverse Limit Extended Metric Measure Spaces}
\author[K.Suzuki]{Kohei Suzuki}
\address{Kohei Suzuki \\ Department of Mathematical Science, Durham University, Science Laboratories, South Road, DH1 3LE, United Kingdom}
\email{kohei.suzuki@durham.ac.uk}
\author[T. Yokota]{Takumi Yokota}
\address{Takumi Yokota \\ Mathematical Institute, Tohoku University, Sendai 980-8578, JAPAN}
\email{takumiy@tohoku.ac.jp}
\thanks{K.S.~gratefully acknowledges funding by the Alexander von Humboldt Stiftung.
T.Y. was partly supported by JSPS KAKENHI (No.18K03298).}
\date{\today}
\begin{document}
\maketitle

\begin{abstract}
We show that every Polish extended metric measure space arises as an
inverse limit of metric measure spaces up to isomorphism. We then prove that synthetic Ricci curvature lower bounds and several functional inequalities, including the
log-Sobolev, Talagrand, Poincar\'e, and dimension-free Harnack inequalities are stable under inverse limit. 
We discuss applications to infinite-dimensional spaces, including
abstract Wiener spaces and their quotient spaces.
\end{abstract}
\section{Introduction} 
\paragraph{\bf Inverse limit spaces}
In this paper, we say that $(X, d_X, \mu_X)$ is a metric measure space (or mm-space) if $(X, d_{X})$ is a complete separable metric space and $\mu_{X}$ is a fully supported Borel probability measure with the Borel $\sigma$-algebra in terms of the topology $\tau_{d_X}$ induced by $d_X$. 
An {\it inverse system} $\seqn{(X_n, d_{X_n}, \mu_{X_n}), f_n}$  is a sequence of metric measure spaces $(X_n, d_{X_n}, \mu_{X_n})$ with $1$-Lipschitz maps $f_{n}: X_{n+1} \to X_n$ satisfying $\mu_{X_n}=(f_n)_*\mu_{X_{n+1}}$ for every $n \in \N$. The {\it inverse limit extended metric measure space} of $\seqn{(X_n, d_{X_n}, \mu_{X_n}), f_n}$ is defined as the quadruplet $(X, \tau_X, d_X, \mu_X)$:
\begin{align*}
X= \varprojlim X_n := \defset{ (x_n) \in \prod_{n=1}^\infty X_n }{ f_n (x_{n+1}) = x_n \text{ for all } n }
\end{align*}
equipped with the subset topology $\tau_X$ induced by the product topology on $\prod_{n=1}^\infty X_n$, the {\it extended} distance $d_X (x, y)=\sup_{n} d_{X_n}(\pi_n(x), \pi_n(y)) \in [0, \infty]$, and the inverse limit measure $\mu_X$. 

\smallskip
The inverse limit is not necessarily a metric measure space in the usual sense: the limit
distance \(d_X\) may take the value \(+\infty\), the topology \(\tau_X\) is not necessarily
induced by \(d_X\), and the measure $\mu_X$ is not necessarily Borel with respect to the topology induced by $d_X$. This decoupling between the measurable/topological structure and the
metric structure leads to pathological phenomena: the support of $\mu_X$ may be empty with respect to the topology $\tau_{d_X}$ (i.e., every open metric ball is a $\mu_X$-negligible set)
because the support of \(\mu_X\) is given rather by the topology $\tau_{X}$.
Moreover, \(d_X\)-Lipschitz functions are not  necessarily 
\(\tau_X\)-Borel measurable. These pathologies naturally occur in infinite-dimensional
spaces, such as infinite-dimensional Gaussian spaces and abstract Wiener spaces.
For this reason, the usual theory of metric measure spaces cannot be applied na\"ively in the
present setting. We need a framework of extended metric measure spaces in which the topology,
measure, and extended distance are allowed to be decoupled. Several such frameworks have
been recently developed, providing useful classes of extended metric measure spaces, see, for instance,
\cite{AGS;C, AES, Sa, PTS, PT}.
%
\smallskip

\paragraph{\bf Extended metric measure spaces} According to~\cite{AGS;C},  a quadruplet $X=(X, \tau_X, d_X, \mux)$ is a \emph{Polish extended metric measure space} (for short: Polish extended mm-space) if 
\begin{enumerate}
\item  \label{d:PEMS1I} $\tau_X$ is a Polish topology on the set $X$. 
\item  \label{d:PEMS2I} $d_X :X\times X\to [0, \infty]$ is a complete extended distance that is lower semicontinuous with respect to the product topology $\tau_{X\times X}$. Here, {\it an extended distance $d_X$} means that $d_X$ satisfies the usual axioms of distance, but allowed to take the value $+\infty$.
\item  \label{d:PEMS3I} the topology $\tau_{d_X}$ induced by $d_X$ is stronger than or equal to $\tau_X$. 
\item  \label{d:PEMS4I} $\mu_X$ is a Borel probability measure on $(X, \tau_X)$.
\end{enumerate}

\smallskip
Let $X$ and $Y$ be Polish extended mm-spaces. In Definition~\ref{d:mmiso}, we introduce {\it isomorphism} between two Polish extended mm-spaces: we say that 
$X$ is {\it isomorphic} to $Y$ and write $X\cong Y$ 
if there exists a Borel map $f: X\to Y$ and a Borel set $X_0 \subset X$ with $\mux(X\setminus X_0)=0$,
\begin{align*}
d_Y (f(x), f(x')) = d_X (x, x')
\end{align*}
for any $x, x' \in X_0$, and $f_* \mux= \muy$.  In this case, we write 
$$X \cong Y.$$
Our first result is that every Polish extended mm-space arises as an inverse limit of mm-spaces.
\begin{thm}[{Theorem~\ref{prop;mmisotolim}}]\label{t:Int1}
For any Polish extended mm-space $X$,
there exists a sequence $\seqn{X_n}$ of mm-spaces with $X_n \prec X_{n+1}$ for all $n\in\N$ and
$$X\cong\varprojlim X_n.$$
\end{thm}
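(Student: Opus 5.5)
Our plan is to approximate the extended distance $d_X$ from below by an increasing sequence of bounded, $\tau_X$-continuous pseudo-distances, and to take as $X_n$ the metric completions of the corresponding quotients. The key input is the standard fact for Polish extended spaces (as in \cite{AGS;C}): since $d_X$ is $\tau_{X\times X}$-lower semicontinuous, there is a countable family $\{\phi_k\}_{k\in\N}$ of bounded $\tau_X$-continuous functions that are $1$-Lipschitz with respect to $d_X$ and satisfy
\begin{align*}
d_X(x,y)=\sup_{k}|\phi_k(x)-\phi_k(y)| \quad \text{for all } x,y\in X.
\end{align*}
We would obtain this family by taking, for a countable $\tau_X$-dense set and truncations, the $d_X$-Lipschitz envelopes $\inf_z\{g(z)+d_X(\cdot,z)\}$ of continuous functions. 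The lower semicontinuity is what makes the resulting envelopes $\tau_X$-continuous after a further approximation (inf-convolution along a compatible metric on $\tau_X$).

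We then set $d_n(x,y):=\max_{k\le n}|\phi_k(x)-\phi_k(y)|$, a $\tau_X$-continuous pseudo-distance with $d_n\uparrow d_X$. Let $\Phi_n=(\phi_1,\dots,\phi_n):X\to(\R^n,\|\cdot\|_\infty)$. We take $X_n$ to be the support of $(\Phi_n)_*\mux$, with the sup-distance and the measure $\mu_{X_n}:=(\Phi_n)_*\mux$. This is a complete separable metric space carrying a fully supported Borel probability measure, hence an mm-space. The coordinate projection $f_n:\R^{n+1}\to\R^n$ is $1$-Lipschitz for $\|\cdot\|_\infty$, maps $X_{n+1}$ into $X_n$ (by continuity and pushforward compatibility), and satisfies $(f_n)_*\mu_{X_{n+1}}=\mu_{X_n}$. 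This gives the relation $X_n\prec X_{n+1}$ in the sense defined in the paper. Consequently $\{X_n,f_n\}$ is an inverse system.

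Next we define $F:X\to\varprojlim X_n$ by $F(x)=(\Phi_n(x))_n$. On the full-measure Borel set $X_0=\bigcap_n\Phi_n^{-1}(X_n)$, $F$ is well defined and continuous, hence Borel. The limit distance is $\sup_n d_{X_n}(\Phi_n x,\Phi_n y)=\sup_n d_n(x,y)=d_X(x,y)$, so $F$ is isometric on $X_0$. It then remains to check that $F_*\mux$ is the inverse limit measure. Both measures have the same finite-dimensional marginals $(\Phi_n)_*\mux$, and cylinder sets generate the Borel $\sigma$-algebra of the inverse limit with the product topology. The uniqueness part of the Kolmogorov-type extension therefore gives equality.

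The main obstacle is the first step: producing a \emph{countable} family of $\tau_X$-continuous (not merely Borel), $d_X$-$1$-Lipschitz functions whose supremum recovers $d_X$ exactly, including at pairs with $d_X=\infty$. We would try first to cite the corresponding representation from \cite{AGS;C}. Failing that, we would prove it via a metric $\delta$ compatible with $\tau_X$, using the functions $x\mapsto\min\{m,\inf_z(d_X(x,z)\wedge m+ j\,\delta(z,\cdot))\}$ evaluated at points of a dense set, and lower semicontinuity to pass $j\to\infty$.
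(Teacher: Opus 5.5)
\textbf{The gap is in your first step.} Everything after it is exactly the paper's architecture: the maps $\Phi_n=(\phi_1,\dots,\phi_n)$ into $(\R^n,\|\cdot\|_\infty)$, $X_n=\supp(\Phi_n)_*\mux$, coordinate projections as bonding maps, and uniqueness of the projective-limit measure. But the first step, which you yourself flag as the main obstacle, is not established.

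You need countably many $\tau_X$-\emph{continuous}, $d_X$-$1$-Lipschitz $\phi_k$ with $d_X=\sup_k|\phi_k(x)-\phi_k(y)|$ for \emph{all} $x,y$. You cannot simply quote this from \cite{AGS;C}. Definition~\ref{d:PEMS} only gives lower semicontinuity of $d_X$, whereas your claim implies a countable form of item~(2) of Definition~\ref{Def:ext-mtmp}, which is an \emph{axiom} in the framework of \cite{AES}.

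Your fallback construction does not prove it either:
\begin{enumerate}
\item The inf-convolution $\psi(w)=\min\{m,\inf_z(d_X(x_0,z)\wedge m+j\,\delta(z,w))\}$ is $j$-Lipschitz for $\delta$, hence continuous, but it is not $d_X$-$1$-Lipschitz in general. Take $\R^2$ with the Euclidean topology and Euclidean $\delta$, and set $d_X((a,b),(a,b'))=\lambda(a)|b-b'|$ with $\lambda(a)=1/(1+|a|)$, and $d_X=+\infty$ between distinct vertical lines. This satisfies Definition~\ref{d:PEMS}. The choice $x_0=(0,0)$, $j=1$, $m=10$ gives $\psi(1,b)=\sqrt{1+b^2}$ for $|b|\le 3$. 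Hence $\psi(1,3)-\psi(1,0)=\sqrt{10}-1>\tfrac32=d_X((1,3),(1,0))$.
\item Letting $j\to\infty$, or using the $d_X$-envelope $\inf_z\{g(z)+d_X(\cdot,z)\}$, restores the Lipschitz bound but loses continuity. Such a function is an infimum of merely l.s.c.\ functions over a non-compact set.
\item Anchoring at points $x_j$ of a countable dense set is the wrong choice of anchor. On an abstract Wiener space $\mu(x_j+H)=0$, so $d_X(x_j,\cdot)\wedge m\equiv m$ $\mu$-a.e. These functions therefore carry no information about $d_X(x,x')$ for generic $x,x'$.
\end{enumerate}

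\textbf{How the paper closes this step.} In Lemma~\ref{lem;weaksep} the paper asks for less, which is all Definition~\ref{d:mmiso} requires: Borel $\phi_n$, and the identity only on a $\mu_X$-full Borel set. It takes compact $K_i$ with $\mu_X(K_i)>1-1/i$, sets $K=\bigcup_iK_i$, picks $\{x_j\}$ $\tau_X$-dense in $K$, and defines
\[
\phi_{i,j,k,l}=\min\{d_X(\cdot,\uB(x_j,1/k)\cap K_i),\,l\},\qquad l\in\Q_{>0}.
\]
These are $d_X$-distances to $\tau_X$-compact \emph{sets}. So they are automatically $d_X$-$1$-Lipschitz, and by joint lower semicontinuity they are l.s.c.\ (Lemma~\ref{l:LSD}), hence Borel.

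For $x,x'\in K$ with $d_X(x,x')>l$, lower semicontinuity gives $r>0$ with $d_X(\cdot,x')>l$ on $\uB(x,r)$. Choose $i$ with $x\in K_i$, $k$ with $2/k<r$, and $x_j\in\uB(x,1/k)$. Then $\phi(x)=0$ and $\phi(x')=l$. The anchor set contains $x$ itself, so no finiteness of $d_X(x_j,x)$ is needed.

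With this lemma, your remaining steps go through verbatim after three changes: replace ``continuous'' by ``Borel''; define $F$ as a constant off $\bigcap_n\Phi_n^{-1}(X_n)$; and assert the isometry only on $K\cap\bigcap_n\Phi_n^{-1}(X_n)$.
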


\paragraph{\bf Stability of Ricci curvature lower bound}
We study the stability of synthetic Ricci
curvature lower bounds and related functional inequalities under inverse limits for Polish extended mm-spaces.
Stability problems for curvature lower bounds, and for related objects such as spectra of
Laplacians, have a long history. For instance, such questions have been studied by
Fukaya~\cite{Fu}, Cheeger--Colding~\cite{CC1,CC2,CC3}, Sturm~\cite{St1,St2}, and
Gigli--Mondino--Savar\'e~\cite{GMS} in the setting of pointed measured
Gromov--Hausdorff convergence and its variants, and by
Funano--Shioya~\cite{FS} and Ozawa--Yokota~\cite{OY} in the setting of the concentration
topology.
These convergence theories, however, are not directly applicable to the extended metric measure
spaces. Due to the decoupling of the topology, the distance, and the measure,  it is not even clear in general how to formulate these
convergences in such a way that the relevant metric-measure structure is captured. 

As the second main result, under the inverse limit, we prove the
stability of the curvature-dimension condition \(\CD(K,\infty)\), the Riemannian
curvature-dimension condition \(\RCD(K,\infty)\),  the Evolution Variational Inequality~$\EVI(K,\infty)$, the log-Harnack $\LH(K)$/the dimension-free Harnack inequality~$\DFH(K)$,~the Poincar\'e inequality
\(\PI(C_P)\), the log-Sobolev inequality \(\LS(C_{LS},D_{LS})\), and the Talagrand inequality
\(\TL(C_T)\). The precise definitions of these curvature bounds and functional inequalities
are given in Section~\ref{s:FI}.
\begin{thm}[Theorem~\ref{t:PCD}, Corollary~\ref{c:RCD}, Theorems~\ref{t:P}--\ref{t:EVI}] \label{t:mthm}
Let~$\seqn{(X_n, d_{X_n}, \mu_{X_n}), f_n}$ be an inverse system of metric measure spaces and $(X, \tau_X, d_X, \mu_X)$ its inverse  limit Polish extended metric measure space.
If each $(X_n, d_{X_n}, \mu_{X_n})$ satisfies one of the conditions
\begin{quote}
$\CD(K, \infty)$, $\RCD(K, \infty)$,  $\EVI(K, \infty)$,
$\LH(K)$, $\DFH(K)$, $\PI(C_P)$,\\ $\LS(C_{LS}, D_{LS})$,  and $\TL(C_T)$,
\end{quote}
then so does the limit~$(X, \tau_X, d_X, \mux)$.
\end{thm}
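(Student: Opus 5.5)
The approach is to reduce every condition on the inverse limit $X=\varprojlim X_n$ to the corresponding condition on the finite levels $X_n$. Two tools do this: the projections $\pi_n:X\to X_n$, which are $1$-Lipschitz and satisfy $(\pi_n)_*\mu_X=\mu_{X_n}$, and an approximation scheme for functions and measures on $X$ by cylindrical objects pulled back from $X_n$. We first record the basic structural facts, all presumably established earlier in the paper. The space $(X,\tau_X,d_X,\mu_X)$ is a Polish extended mm-space. The equality $d_X(x,y)=\sup_n d_{X_n}(\pi_n x,\pi_n y)$ is a monotone limit. For $\mu\in\P(X)$ with $\mu\ll\mu_X$ and density $\rho$, the pushforwards $\mu^n:=(\pi_n)_*\mu$ have densities $\rho_n=\mathbb E_{\mu_X}[\rho\mid\pi_n^{-1}\mathcal B_{X_n}]\circ\pi_n^{-1}$. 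By the martingale convergence theorem and lower semicontinuity of entropy, $\Ent{\mu_{X_n}}{\mu^n}\uparrow\Ent{\mu_X}{\mu}$. By monotone convergence in the optimal transport duality, $W_{2,d_{X_n}}(\mu^n,\nu^n)\uparrow W_{2,d_X}(\mu,\nu)$; this uses lower semicontinuity of $d_X$ and compactness of couplings in the Polish topology $\tau_X$. With these monotone limits in hand, $\TL(C_T)$ follows immediately, since $W_2^2(\mu^n,\mu_{X_n})\le C_T\Ent{\mu_{X_n}}{\mu^n}\le C_T\Ent{\mu_X}{\mu}$ and we let $n\to\infty$.

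For $\CD(K,\infty)$, take $\mu_0,\mu_1$ of finite entropy. At each level $n$ choose a $W_2$-geodesic $(\mu^n_t)$ from $\mu_0^n$ to $\mu_1^n$ along which entropy is $K$-convex; equivalently, an optimal dynamical plan $\Pi^n$. The plan is to lift $\Pi^n$ to $X$ by gluing it with the disintegration of $\mu_0,\mu_1$ over $\pi_n$, which produces approximate geodesics on $X$. We then pass to a limit using tightness in $\tau_X$: the marginals $\mu_t$ have uniformly bounded entropy, so they are uniformly integrable with respect to $\mu_X$, and $\tau_X$-tightness is inherited from $\mu_X$. The lower semicontinuity of $W_{2,d_X}$ in $\tau_X$ and of $\Ent{\mu_X}{\cdot}$ then yields a $W_2$-geodesic on $X$ satisfying the convexity inequality, whose right-hand side converges by the monotone limits above. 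A cleaner route is to use compatibility: for fixed $t$, the family $\{(\pi_m)_*\mu^n_t\}_{n\ge m}$ is tight, so a diagonal argument gives a consistent family, and Kolmogorov extension for inverse limits produces $\mu_t$. We expect this step to be the main obstacle. Geodesics at level $n+1$ need not project to geodesics at level $n$, so the construction must go through limits rather than exact consistency, and one must verify that the limit curve is a genuine $d_X$-geodesic with $W_2(\mu_0,\mu_1)<\infty$.

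The analytic inequalities $\PI$, $\LS$, $\LH$ and $\DFH$ are handled through the Cheeger energy. The key lemma is that the cylinder functions $u=\phi\circ\pi_n$, with $\phi\in\Lip_b(X_n)$, are dense in energy in $W^{1,2}(X)$, and that $|\nabla(\phi\circ\pi_n)|_{X}\le|\nabla\phi|_{X_n}\circ\pi_n$. This should follow from the $\AC$ density results of \cite{AGS;C} together with the fact that every $\tau_X$-continuous bounded $d_X$-Lipschitz function can be approximated by such compositions, since $d_X=\sup_n d_{X_n}\circ\pi_n$. The inequalities $\PI$ and $\LS$ transfer to cylinder functions by pushforward, and then to all of $W^{1,2}(X)$ by density and lower semicontinuity. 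For the Harnack-type inequalities and $\EVI$/$\RCD$, we show that the heat semigroup $P_t$ on $X$ is compatible with the levels, $\mathbb E[P_t u\mid\pi_n]\to P_t u$, using Mosco convergence of the pulled-back Cheeger energies $\Ch_n(\phi)=\int|\nabla\phi|^2\,d\mu_{X_n}$, which increase to $\Ch_X$. Quadraticity is preserved under such monotone limits, so $\RCD$ follows from $\CD$. The inequalities $\LH(K)$ and $\DFH(K)$ pass to the limit pointwise via lower semicontinuity of $d_X$, and $\EVI$ follows from the equivalence with $\RCD(K,\infty)$ in \cite{AGS;C}.
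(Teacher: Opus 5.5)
\textbf{Main gap: the $\EVI$ step.} You obtain $\EVI(K,\infty)$ on $X$ ``from the equivalence with $\RCD(K,\infty)$''. That equivalence is a result for ordinary metric measure spaces \cite{AGS;D}. For Polish extended mm-spaces such as $\varprojlim X_n$ it is not known whether $\RCD(K,\infty)$ implies $\EVI(K,\infty)$, and this is exactly why the paper proves the stability of $\EVI$ as a separate statement. Your route therefore stops at $\RCD$, and you must pass $\EVI$ itself to the limit.

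The paper first integrates the level-$n$ inequality, using that entropy decreases along the heat flow $H^n_t$ of $X_n$:
\[
\tfrac12 W_{2,d_{X_n}}(H^n_t\sigma_n,\nu_n)^2-\tfrac12 e^{-K_nt}W_{2,d_{X_n}}(\sigma_n,\nu_n)^2
\le \frac{1-e^{-K_nt}}{K_n}\bigl(\Ent{\mu_{X_n}}{\nu_n}-\Ent{\mu_{X_n}}{H^n_t\sigma_n}\bigr),
\]
where $\sigma_n=(\pi_n)_*\sigma$, $\nu_n=(\pi_n)_*\nu$, and the factor is read as $t$ when $K_n=0$. It then lets $n\to\infty$:
\begin{itemize}
\item the second term converges by the $W_2$-convergence lemma;
\item $\Ent{\mu_{X_n}}{\nu_n}\le\Ent{\mu_X}{\nu}$;
\item the two remaining terms are lower semicontinuous, because the lifted flows $\bigl(H^{(n)}_t(\rho_n\circ\pi_n)\bigr)\mu_X$ converge weakly to $H_t\sigma$. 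Here $H^{(n)}_t$ is the pull-back of $H^n_t$ to $L^2(\mu_X)$, and the convergence uses $H^{(n)}_t\to H_t$ strongly in $L^2$, which comes from Mosco convergence.
\end{itemize}
Differentiating the limiting integrated inequality at $t=0$ and using the semigroup property gives $\EVI(K,\infty)$ on $X$.

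\textbf{Cheeger energies and heat flows.} The monotonicity in your Cheeger-energy step runs the wrong way. Since $d_n(x,y):=d_{X_n}(\pi_n(x),\pi_n(y))\uparrow d_X$, the pulled-back energies \emph{decrease}: $\Ch_{d_X,\mu_X}\le\Ch_{d_{n+1},\mu_X}\le\Ch_{d_n,\mu_X}$. Your bound $|\nabla(\phi\circ\pi_n)|_X\le|\nabla\phi|_{X_n}\circ\pi_n$ is only the easy liminf half of Mosco convergence.

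That bound does not let you transfer $\PI$ or $\LS$ to cylinder functions on $X$. The level-$n$ inequality has $\Ch_{d_{X_n},\mu_{X_n}}(\phi)$ on the right, and this can be strictly larger than $\Ch_{d_X,\mu_X}(\phi\circ\pi_n)$ because the bonding maps are only $1$-Lipschitz. So density of cylinder functions in the $\Ch_{d_X,\mu_X}$-energy is not enough. What you need is a recovery sequence measured in the level energies: $\phi_k\circ\pi_{n_k}\to u$ in $L^2(\mu_X)$ with $\Ch_{d_{X_{n_k}},\mu_{X_{n_k}}}(\phi_k)\to\Ch_{d_X,\mu_X}(u)$. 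This is the genuinely nontrivial input, and the paper takes it from \cite[Theorem~9.2]{AES}.

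Similarly, for $\LH$ and $\DFH$ the relevant statement is $H^{(n)}_tu_n\to H_tu$ in $L^2(\mu_X)$, again a consequence of Mosco convergence. Your statement $\mathbb E[P_tu\mid\pi_n]\to P_tu$ is plain martingale convergence and says nothing about the level semigroups, since $P_t$ does not in general commute with conditional expectation onto $\pi_n^{-1}\mathcal B(X_n)$.

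Your $\TL$ argument and your outline for $\CD$ are fine. In the $\CD$ step it is simplest to lift densities as $\rho^n_t\circ\pi_n$, which preserves entropy exactly. The geodesic property you flag as the main obstacle then follows, as in the paper, from three facts:
\begin{itemize}
\item lower semicontinuity of $W_{2,d_X}$ along the lifts;
\item the identity $W_{2,d_{X_n}}(\mu^n_s,\mu^n_t)=|s-t|\,W_{2,d_{X_n}}(\mu^n_0,\mu^n_1)\to|s-t|\,W_{2,d_X}(\mu_0,\mu_1)$;
\item the triangle inequality.
\end{itemize}
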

We note that, unlike in the classical metric measure setting, it is not presently known whether
\(\RCD\) implies the \(\EVI\) property for the heat flow in general
extended metric measure spaces. Consequently, in the present framework the \(\EVI\) property
cannot be obtained merely by combining the \(\CD\) condition with infinitesimal
Hilbertianity. We therefore discuss the stability of \(\EVI\) as a separate statement. 

\paragraph{\bf Application} As applications, we establish \(\RCD\) and \(\EVI\) for several
infinite-dimensional spaces, including the infinite product Gaussian space, classical Wiener
spaces, and their quotient spaces; see Theorems~\ref{t:CQ} and~\ref{t:WPE}. These applications are closely related to earlier work on curvature bounds for Wiener spaces. Fang--Shao--Sturm~\cite[Theorem~1.5]{FSS} proved
\(\CD(1,\infty)\) for the Wiener space. Moreover, Ambrosio--Erbar--Savar\'e~\cite{AES}
obtained an \(\EVI\)-type property for abstract Wiener spaces with respect to the dynamic transport-type
extended distance $W_{\mathcal E}$ associated with the Ornstein-Uhlenbeck Dirichlet form $\mathcal E$. It is currently not known whether $W_{\mathcal E}$ is identical to the static \(2\)-Wasserstein
extended distance \(W_{2,d}\) induced by the Cameron--Martin distance \(d\) used in the present paper.
To the best of our
knowledge,  the \(\EVI(1,\infty)\) property with respect to this static distance \(W_{2,d}\) on Wiener space has not been explicitly discussed in the literature although the aforementioned work provides strong evidence for it.
Our stability result gives this statement directly from the $\EVI$ property of the finite-dimensional Gaussian spaces, and also
extends it to quotient spaces of the Wiener space.

\smallskip
\paragraph{\bf The role of the topology}
The role of the topology in an extended metric measure space is not as essential as the one in ordinary metric measure spaces. In the ordinary setting, the metric topology induces the
Borel $\sigma$-algebra and the support of the measure.  In the extended setting,  the topology \(\tau_X\) provides the measurable structure and  the support of
\(\mu_X\),
while the extended distance \(d_X\) encodes the metric geometry. This separation explains why
our isomorphism does not necessarily preserve the topology. The role of the topology is rather supportive for the measurable structure,
but the metric-measure properties studied in this paper essentially depend only on the measure and the
extended distance up to null sets. Hence a measure-preserving map that preserves the extended
distance on a set of full measure is sufficient to transport these properties. In
Theorem~\ref{thm:energy} and Proposition~\ref{p:SI}, we prove  that our
isomorphisms preserve the Cheeger energy, the heat semigroup, the Wasserstein distance, the curvature-dimension conditions as well as the relevant functional inequalities. 
\smallskip
\paragraph{\bf Organisation} 
The paper is organised as follows. In Section~\ref{s:invlim}, we recall the relevant notions of
extended metric measure spaces and show that inverse limits of metric measure spaces fit into two
frameworks: they are Polish extended metric measure spaces in the sense of~\cite{AGS;C}
(Proposition~\ref{p:IPE}) and extended metric measure spaces in the sense of~\cite{AES}
(Proposition~\ref{prop;invlimextsp}). We then introduce a notion of isomorphism between Polish
extended metric measure spaces (Definition~\ref{d:mmiso}) and prove that every Polish extended
metric measure space is isomorphic to an inverse limit of metric measure spaces
(Theorem~\ref{prop;mmisotolim}). In Section~\ref{sec:CD}, we prove the stability of the
\(\CD\) condition under inverse limits. In Section~\ref{sec:Mosco}, we establish Mosco
convergence of the Cheeger energies associated with the finite-dimensional approximating
distances. In Section~\ref{s:FI}, we prove the stability of the \(\RCD\) condition and
several functional inequalities, including the log-Harnack inequality~\(\LH\), the dimension-free
Harnack inequality~\(\DFH\), the Poincar\'e inequality~\(\PI\), the log-Sobolev inequality~\(\LS\),
and the Talagrand inequality~\(\TL\). The precise definitions of these curvature and functional
inequality conditions are given there. In Section~\ref{sec:Inv}, we prove that the Cheeger energy is
invariant under isomorphisms of Polish extended metric measure spaces. In Section~\ref{sec:Sta},
we show that the curvature conditions and functional inequalities are also
invariant under such isomorphisms. Finally, in Section~\ref{ssec:WS}, we discuss examples,
including infinite product spaces, abstract Wiener spaces, and their quotient spaces.

\section{Inverse limits and extended metric measure spaces} \label{s:invlim}
In this section, we recall two definitions of extended metric measure spaces and show that inverse limit of metric measure spaces satisfy both of them.

\subsection{Extended metric measure spaces and inverse limits}
We say that $d_X: X \times X \to [0,\infty]$ is {\it an extended distance} (or {\it an extended metric})  if it satisfies the axioms of distance functions except that it may take $+\infty$. 
We say that $(X, d_X)$ is {\it complete} if every $d_X$-Cauchy sequence converges in $(X, d_X)$.  
\begin{defn}[Polish extended metric measure space {\cite[Definition~2.3]{AGS;C}}] \label{d:PEMS}
We call a quadruplet $X=(X, \tau_X, d_X, \mux)$ a \emph{Polish extended metric measure space} (for short: Polish extended mm-space) if 
\begin{enumerate}
\item  \label{d:PEMS1} $\tau_X$ is a Polish topology on the set $X$. 
\item  \label{d:PEMS2} $d_X :X\times X\to [0, \infty]$ is a complete extended distance that is lower semicontinuous with respect to the product topology $\tau_{X\times X}$.
\item  \label{d:PEMS3} the topology $\tau_{d_X}$ induced by $d_X$ is stronger than $\tau_X$. 
\item  \label{d:PEMS4} $\mu_X$ is a Borel probability measure on $(X, \tau_X)$.
\end{enumerate}
When $d_X:X\times X\to [0, \infty)$, $\tau_X=\tau_{d_X}$ and $\supp_{\tau_X}\mu_X=X$, we call $(X, \tau_X, d_{X}, \mu_X)$ a {\it metric measure space} (for short: mm-space).
In this  case, we shorten the notation $X=(X, \tau_X, d_X, \mux)$ to $X=(X, d_X, \mux)$.
\end{defn}
We note that the original definition in Ambrosio--Gigli--Savar\'{e}~\cite[Definition~2.3]{AGS;C} allows $\mu_X$ to have infinite total mass $\mu_X(X)=+\infty$. 
The following proposition gives a sufficient condition for $(X, \tau_{d_X}, d_X, \mu_X)$ (with the topology $\tau_{d_X}$, not $\tau_X$) to be  an mm-space .
\begin{prop} \label{p:PM}
Let $(X, \tau_X, d_X, \mux)$ be a Polish extended mm-space.
Suppose that $d_X (x, x')<\infty$ for any $x, x' \in X$, $\supp_{\tau_{d_X}}\mu_X=X$ and that  $(X, \tau_{d_X})$ is separable.
Then $(X, \tau_{d_X}, d_X, \mux)$ is an mm-space.
\end{prop}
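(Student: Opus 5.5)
We have to check the conditions in the definition of an mm-space for $(X,\tau_{d_X},d_X,\mu_X)$. These are: $d_X$ is a finite complete separable metric; the topology is $\tau_{d_X}$; $\mu_X$ is a Borel probability measure for $\tau_{d_X}$; and $\mu_X$ has full $\tau_{d_X}$-support. Finiteness of $d_X$ and full support are hypotheses. Completeness of $d_X$ is item (2) of Definition~\ref{d:PEMS}, and separability of $(X,\tau_{d_X})$ is assumed. So $(X,d_X)$ is a complete separable metric space, hence Polish. The only substantive point is that $\mu_X$, a priori Borel only for $\tau_X$, is a Borel measure for $\tau_{d_X}$.

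To handle this, I will show that the Borel $\sigma$-algebras coincide, $\mathcal B(\tau_X)=\mathcal B(\tau_{d_X})$. By item (3), $\tau_X\subset\tau_{d_X}$, so $\mathcal B(\tau_X)\subset\mathcal B(\tau_{d_X})$. For the converse, separability of $(X,d_X)$ means every $\tau_{d_X}$-open set is a countable union of open $d_X$-balls $B_r(x)=\{y: d_X(x,y)<r\}$, with centres in a countable dense set and rational radii. It therefore suffices to show each such ball lies in $\mathcal B(\tau_X)$.

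Lower semicontinuity of $d_X$ on $(X\times X,\tau_{X\times X})$ gives this. For fixed $x$, the map $y\mapsto d_X(x,y)$ is $\tau_X$-lower semicontinuous. Hence the closed balls $\bar B_s(x)=\{y: d_X(x,y)\le s\}$ are $\tau_X$-closed. Writing $B_r(x)=\bigcup_{s\in\Q,\,s<r}\bar B_s(x)$ shows that $B_r(x)$ is $\tau_X$-$F_\sigma$, hence $\tau_X$-Borel.

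It follows that $\mu_X$ is a Borel probability measure on $(X,\tau_{d_X})$, and all the conditions are met. I do not expect a genuine obstacle. The one step needing care is the $\sigma$-algebra identification, where separability of $\tau_{d_X}$ is essential: without it, $\tau_{d_X}$-open sets need not be countable unions of balls, and the pathology described in the introduction, where Lipschitz functions fail to be $\tau_X$-Borel, can occur.
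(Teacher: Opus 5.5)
Your proposal is correct and follows essentially the same route as the paper. Both arguments identify $\mathcal B(\tau_X)=\mathcal B(\tau_{d_X})$ using $\tau_X\subset\tau_{d_X}$, the fact that separability makes $\tau_{d_X}$ countably generated by open balls, and the representation of each open ball as a countable union of $\tau_X$-closed sublevel sets of the lower semicontinuous function $d_X(x,\cdot)$; your explicit checks of completeness, finiteness and support are the routine items the paper leaves implicit.
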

\begin{proof}
It suffices to check \eqref{d:PEMS4}  in Definition~\ref{d:PEMS} with $\tau_{d_X}$ in place of $\tau_X$. We show that  the Borel $\sigma$-algebra $\mathcal B(\tau_{d_X})$ generated by $\tau_{d_X}$ coincides with $\mathcal{B}(\tau_X)$. Since the topology $\tau_{d_X}$ is finer than $\tau_{X}$ due to the property~\eqref{d:PEMS3} in Definition~\ref{d:PEMS}, we have $\mathcal{B}(\tau_X)\subset\mathcal{B}(\tau_{d_X})$.  Since $\tau_{d_X}$ is countably generated by open metric balls due to the separability hypothesis, it suffices to show  that open metric balls are Borel with respect to $\tau_X$.  Closed metric balls are sublevel sets of the lower semicontinuous function $d_X(x_0, \cdot)$ for a fixed point $x_0 \in X$,  hence, they are $\tau_X$-closed. By $B_X(x, r)=\cup_{m \in \N} \{d_X(x, \cdot) \le r-1/m\}$, open balls are $\tau_X$-Borel. 
\end{proof}

For two metric measure spaces $X$ and $Y$, we write $Y \prec X$, and say that {\it $Y$ is below $X$ in the $1$-Lipschitz order} if there exists a Borel measurable map $f: X \to Y$ such that $f$ is  $1$-Lipschitz and measure-preserving. Namely, 
$$d_Y(f(x_1), f(x_2)) \le d_X(x_1, x_2) \quad  \forall x_1, x_2 \in X \quad \text{and} \quad f_*\mu_X =\mu_Y \comma$$
where $f_*\mu_X$ is the push-forward measure of $\mu_X$ by $f$.
\begin{defn}[Inverse limit extended mm-space]\label{def;invlimmeas}
Let $\seqn{X_n}$ be a sequence of mm-spaces with $X_n \prec X_{n+1}$ for all $n$
and $f_n :X_{n+1}\to X_n$ a $1$-Lipschitz map with $(f_n)_* \mu_{X_{n+1}} =\mu_{X_n}$.
Define the set
\[
X= \varprojlim X_n := \defset{ (x_n) \in \prod_{n=1}^\infty X_n }{ f_n (x_{n+1}) = x_n \text{ for all } n }
\]
and equip $X$ with the relative topology $\tau_X$ of the product topology on $\prod_{n=1}^\infty X_n$ and with the projective limit measure $\mux$,
i.e., the unique Borel probability measure on $(X, \tau_X)$ satisfying $(\pi_n)_* \mux = \mu_{X_n}$ for all $n$, where $\pi_n :X\to X_n$ is the projection. (This is a standard application of the Kolmogorov extension theorem for projective limit.) 
Furthermore, define
\begin{align}\label{eq;dx}
d_X(x, x')
:= \limn d_{X_n} (x_n, x'_n)
= \limn d_{X_n} (\pi_n(x), \pi_n(x'))
\in [0, \infty]
\end{align}
for $x=(x_n), x'=(x'_n) \in X$.
We call $(X, \tau_X, d_X, \mux)$ the \emph{inverse limit} of $\seqn{(X_n, f_n)}$.
\end{defn}

Note that $d_{X_n} (x_n, x'_n)$ is monotone non-decreasing due to the $1$-Lipschitz property of $f_n$, and therefore, $\limn d_{X_n} (x_n, x'_n)$ is well-defined (possibly $+\infty$).  

\begin{prop}\label{p:IPE}
Let $\seqn{X_n}$ be a sequence of mm-spaces with $X_n \prec X_{n+1}$ for all $n$.
Then the inverse limit $X:=\varprojlim X_n$
is a Polish extended mm-space in the sense of Definition~\ref{d:PEMS}.
\end{prop}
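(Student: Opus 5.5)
We verify the four conditions of Definition~\ref{d:PEMS} in turn, working inside the product $\prod_{n} X_n$ with the product topology.

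\textbf{Polishness of $\tau_X$.} A countable product of Polish spaces is Polish. Each map $(x_k)\mapsto (x_n, f_n(x_{n+1}))$ into $X_n\times X_n$ is continuous, since $f_n$ is $1$-Lipschitz and hence continuous. The diagonal of $X_n$ is closed, so each constraint set $\{f_n(x_{n+1})=x_n\}$ is closed. Then $X$ is a countable intersection of closed sets, hence closed in the product. A closed subset of a Polish space is Polish, which gives \eqref{d:PEMS1}.

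\textbf{The extended distance.} Symmetry and the triangle inequality pass to the monotone limit in \eqref{eq;dx}, and $d_X(x,x')=0$ forces $x_n=x'_n$ for all $n$. Since the sequence is non-decreasing, $d_X=\sup_n d_{X_n}\circ(\pi_n\times\pi_n)$. This is a supremum of continuous functions on $X\times X$ (each $\pi_n$ is continuous), hence lower semicontinuous for $\tau_{X\times X}$.

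\textbf{Completeness.} Let $(x^k)$ be $d_X$-Cauchy. Because $d_{X_n}(x^k_n,x^l_n)\le d_X(x^k,x^l)$, each $(x^k_n)_k$ is Cauchy in the complete space $X_n$, so it converges to some $y_n$. By continuity of $f_n$ we get $f_n(y_{n+1})=y_n$, so $y=(y_n)\in X$. Fix $\ep>0$ and choose $K$ with $d_X(x^k,x^l)<\ep$ for $k,l\ge K$. Then for every $n$ and $k\ge K$, letting $l\to\infty$ gives $d_{X_n}(x^k_n,y_n)\le\ep$. Taking the supremum over $n$ yields $d_X(x^k,y)\le\ep$, so $x^k\to y$ in $d_X$. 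This interchange of limit and supremum is the only delicate point, and the uniformity of the bound in $n$ handles it.

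\textbf{Comparison of topologies.} Basic open sets of $\tau_X$ are finite intersections of sets $\pi_n^{-1}(U)$ with $U\subset X_n$ open. Each $\pi_n$ is $1$-Lipschitz from $(X,d_X)$ to $X_n$, since $d_{X_n}(\pi_n x,\pi_n x')\le d_X(x,x')$, so it is $\tau_{d_X}$-continuous. Hence these basic sets are $\tau_{d_X}$-open, and $\tau_{d_X}\supset\tau_X$, which gives \eqref{d:PEMS3}.

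\textbf{The measure.} The measure $\mu_X$ is a Borel probability measure on $(X,\tau_X)$ with $(\pi_n)_*\mu_X=\mu_{X_n}$, as asserted in Definition~\ref{def;invlimmeas}, so \eqref{d:PEMS4} holds. If one wants to justify its existence, it comes from the Kolmogorov/Prokhorov projective limit theorem for Radon measures on Polish spaces. The consistency condition is exactly $(f_n)_*\mu_{X_{n+1}}=\mu_{X_n}$, and tightness holds because all the spaces are Polish. Uniqueness follows because the cylinder sets form a $\pi$-system generating $\mathcal B(\tau_X)$.

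I expect no step to be a real obstacle. The main care is in the completeness argument, specifically the uniform-in-$n$ bound used to pass to the limit.
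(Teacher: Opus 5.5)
Your proof is correct and follows essentially the same route as the paper. Completeness comes from coordinatewise limits plus the uniform-in-$n$ bound, Polishness from $X$ being closed in the countable product, and lower semicontinuity from $d_X$ being a supremum of continuous functions; the only differences are cosmetic (you get closedness as an intersection of preimages of the closed diagonals rather than via sequential closedness, and you spell out properties (3) and (4) in more detail than the paper does).
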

\begin{proof}
As Property~\eqref{d:PEMS4} follows from Definition~\ref{def;invlimmeas},  and Property~\eqref{d:PEMS3} follows by the definitions of $d_X$ and the product topology, we only verify Properties~\eqref{d:PEMS1} and~\eqref{d:PEMS2} in Definition~\ref{d:PEMS}. 

We prove that $(X, d_X)$ is complete. Let $\{y^k\}_{k \in \N} \subset X$ be a $d_X$-Cauchy sequence. Let $y_n^k:=\pi_n(y^k) \in X_n$. By $d_{X_n} (y_n^k, y_n^l) \le d_{X} (y^k, y^l)$, the sequence $\{y_n^k\}_{k \in \N}$ is $d_{X_n}$-Cauchy for every $n \in \N$. Since $(X_n, d_{X_n})$ is complete by hypothesis, the limit point $y_n \in X_n$ exists for every $n \in \N$. Set $y:=(y_n)_{n \in \N}$. Since the map $f_n: X_{n+1} \to X_n$ witnessing $X_n \prec X_{n+1}$ is $1$-Lipschitz, we obtain that 
\begin{align} \label{e:IPE-1}
f_n(y_{n+1})= \lim_{k \to \infty} f_n(y_{n+1}^k) =\lim_{k\to \infty}y_n^k= y_{n} \fstop
\end{align}
Thus, $y \in X$. It remains to show that \(y^k\to y\) with respect to \(d_X\). Let \(\varepsilon>0\).
Since \(\{y^k\}_k\) is \(d_X\)-Cauchy, there exists \(N\in\mathbb N\) such that
\[
d_X(y^k,y^\ell)<\varepsilon
\qquad\text{for all }k,\ell\ge N.
\]
Fix \(k\ge N\). Then, for every \(n\in\mathbb N\),
\[
d_{X_n}(y^k_n,y_n)
=
\lim_{\ell\to\infty}d_{X_n}(y^k_n,y^\ell_n)
\le
\varepsilon.
\]
Taking the supremum over \(n\), we obtain
\[
d_X(y^k,y)\le\varepsilon.
\]
Thus \(y^k\to y\) in \(d_X\), and \((X,d_X)\) is complete.

We now prove that $(X, \tau_X)$ is Polish. Since the product topology is Polish whenever $(X_n, \tau_{X_n})$ is Polish for every $n \in \N$, the product space $\prod_{n=1}^\infty X_n$ equipped with the product topology is Polish. By recalling that every closed set in a Polish space is again Polish with respect to the relative topology, it suffices to verify that $X$ is closed as a subset in the product space $\prod_{n=1}^\infty X_n$. Since a subset in a Polish space is closed if and only if it is sequentially closed, we only need to show that $X$ is sequentially closed.  
 Let $\{y^k\}_{k \in \N} \subset X$ be a $\tau_X$-converging sequence. We keep the same notation as in the first paragraph. Then, by the definition of the product topology and the same proof as in the previous paragraph regarding the completeness, we can verify \eqref{e:IPE-1} and, therefore, $X$ is closed. Thus, $(X, \tau_X)$ is Polish. 
The lower semicontinuity of $d_X$ is a direct consequence of the expression 
\begin{align*}
d_{X}(x, y)=\sup_{n \in \N}d_{X_n}(\pi_n(x), \pi_n(y))
\end{align*}
and the general fact that the monotone increasing limit of a sequence of continuous functions is lower semicontinuous. 
\end{proof}

Ambrosio--Erbar--Savar\'{e}~\cite{AES} introduced another definition  of extended mm-spaces. We say that $d_X: X \times X \to [0,\infty)$ is {\it semidistance} if it satisfies the axioms of distance functions except that $d_X(x,y)=0 \implies x=y$ is not required.
\begin{defn}[Extended mm-space~{\cite{AES}}]\label{Def:ext-mtmp}
We call $X=(X, \tau_X, d_X, \mux)$ an \emph{extended metric measure space} (for short: extended mm-space) if 
\begin{enumerate}
\item\label{Def:ext-mtmp1} $\tau_X$ is a Hausdorff topology on $X$.
\item\label{Def:ext-mtmp2} There exist $\tau_X^{\times 2}$-continuous bounded semidistances $d_i$ on $X$ with $\iiI$ and 
$$d_X (x, x') = \sup_{\iiI} d_i (x, x') \qquad x, x' \in X. $$
\item\label{Def:ext-mtmp3} $\tau_X$ is generated by 
$$\lip_b (X, \tau_X, d_X) := \defset{ f: X\to\R }{ f \text{ is bounded $\tau_X$-continuous $d_{X}$-Lipschitz} }\fstop$$
\item\label{Def:ext-mtmp4} $\mux$ is a Radon probability measure on $(X, \cB(\tau_X))$,
where $\cB(\tau_X)$ is the Borel $\sigma$-algebra of $(X, \tau_X)$.
\end{enumerate}
\end{defn}

\begin{prop}\label{prop;invlimextsp}
Let $\seqn{X_n}$ be a sequence of mm-spaces with $X_n \prec X_{n+1}$ for all $n$.
Then the inverse limit $X:=\varprojlim X_n$ is an extended mm-space  in the sense of Definition~\ref{Def:ext-mtmp}
\end{prop}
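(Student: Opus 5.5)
We verify the four conditions of Definition~\ref{Def:ext-mtmp} for $X=\varprojlim X_n$ with $\tau_X$, $d_X$, $\mux$ as in Definition~\ref{def;invlimmeas}. Condition~\eqref{Def:ext-mtmp1} is immediate: $(X,\tau_X)$ is Polish by Proposition~\ref{p:IPE}, hence Hausdorff. Condition~\eqref{Def:ext-mtmp4} follows because $\mux$ is a Borel probability measure on the Polish space $(X,\tau_X)$. Every finite Borel measure on a Polish space is tight, hence Radon, by Ulam's theorem.

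For condition~\eqref{Def:ext-mtmp2}, take the index set $I=\N\times\N$ and define
\[
d_{n,k}(x,x'):=\min\{d_{X_n}(\pi_n(x),\pi_n(x')),\,k\}.
\]
Each $d_{n,k}$ is a bounded semidistance. It is $\tau_X^{\times2}$-continuous, since $\pi_n:(X,\tau_X)\to(X_n,d_{X_n})$ is continuous by the definition of the product topology, and $d_{X_n}$ and truncation are continuous. By \eqref{eq;dx} and the monotonicity of $n\mapsto d_{X_n}(x_n,x_n')$, we get $\sup_{n,k}d_{n,k}(x,x')=\sup_n d_{X_n}(x_n,x'_n)=d_X(x,x')$, including the case where the value is $+\infty$.

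Condition~\eqref{Def:ext-mtmp3} is the only nontrivial point. Let $\tau'$ be the initial topology generated by $\lip_b(X,\tau_X,d_X)$. Since all these functions are $\tau_X$-continuous, $\tau'\subset\tau_X$. For the reverse inclusion, it suffices to show that each projection $\pi_n:(X,\tau')\to (X_n,d_{X_n})$ is continuous, because $\tau_X$ is the initial topology of the family $\{\pi_n\}$. The topology of the metric space $X_n$ is generated by the bounded $1$-Lipschitz functions $g_{z,k}(y):=\min\{d_{X_n}(z,y),k\}$ with $z\in X_n$, $k\in\N$. Indeed, balls $B(z,r)=\{g_{z,k}<r\}$ for $k>r$ form a base. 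It therefore suffices to check that $g_{z,k}\circ\pi_n\in\lip_b(X,\tau_X,d_X)$. Boundedness and $\tau_X$-continuity are clear. The Lipschitz property follows from
\[
|g_{z,k}(\pi_n x)-g_{z,k}(\pi_n x')|\le d_{X_n}(x_n,x'_n)\le d_X(x,x').
\]
Hence $\pi_n$ is $\tau'$-continuous, so $\tau_X\subset\tau'$.

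I expect the main (mild) obstacle to be this last step: arguing cleanly that the initial topology of the composed functions $g\circ\pi_n$ coincides with the product subspace topology. This reduces to the fact that the initial topology of a family of maps is the initial topology of any generating family of functions on their targets.
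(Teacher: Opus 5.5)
Your proof is correct and matches the paper's essentially step for step. You use the same truncated semidistances $d_{X_n}(\pi_n\cdot,\pi_n\cdot)\wedge M$ for (2), and the same functions $d_{X_n}(\pi_n(\cdot),a)\wedge M$ to recover the sets $\pi_n^{-1}(B_{X_n}(a,r))$ for (3). Your initial-topology formulation also sidesteps the compatibility $\pi_m=f_m\circ\cdots\circ f_{n-1}\circ\pi_n$ that the paper implicitly needs in order to call these sets a base rather than a subbase.
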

\begin{proof}
Property (1) follows from Proposition~\ref{p:IPE}, and property (4) follows because every Borel
probability measure on a Polish space is Radon. We verify property~ \eqref{Def:ext-mtmp2}. For \(M\in\mathbb N\),
set
\[
d_{n,M}(x,y):=d_{X_n}(\pi_n(x),\pi_n(y))\wedge M.
\]
Then \(d_{n,M}\) is a bounded continuous semidistance on \(X\), and
\[
d_X(x,y)=\sup_{n,M\in\mathbb N} d_{n,M}(x,y).
\]
Thus property (2) holds. 
It remains to prove \eqref{Def:ext-mtmp3}. We first prove that  the algebra $\lip_b (X, \tau_X, d_X)$ separates points in $X$. Let $x \neq y \in X$. 
Then there exists $n_0 \in \N$ so that $d_{X_n}(\pi_n(x), \pi_n(y))>0$ for any $n \ge n_0$. Thus, for a positive constant $M$, the function $\rho_{n}:=d_{X_n}(\pi_n(x), \pi_n(\cdot)) \wedge M \in \lip_b (X, \tau_X, d_X)$ separates $x$ and $y$ for any $n \ge n_0$ since 
$$\rho_n (x) =0 < \rho_n (y)\fstop$$
We now prove that the Lipschitz algebra generates the topology. Note that the product topology $\tau_X$ has a base of the form $\pi_n^{-1}(B_{X_n}(a, r))$ for $a \in X_n$, $r>0$ and $n \in \N$, where $B_{X_n}(a, r):=\{x \in X_n: d_{X_n}(a, x)<r\}$. We take $f_{n,a}(x):=d_{X_n}(\pi_n(x), a) \wedge M \in \lip_b(X, \tau_X, d_X)$. Noting $\pi^{-1}(B_{X_n}(a, r)) = f_{n, a}^{-1}((-\infty, r))$ with $M>r$, we conclude that every open set is generated by $\lip_b(X, \tau_X, d_X) $, hence $\tau_X$ is generated by $\lip_b(X, \tau_X, d_X) $. 
\end{proof}

\subsection{Isomorphism of extended metric measure spaces}
We introduce a concept of {\it isomorphism} for extended metric measure spaces. 
\begin{defn}\label{d:mmiso}
Let $X$ and $Y$ be Polish extended mm-spaces.
We say that $X$ is {\it isomorphic} to $Y$ and write $X\cong Y$
if there exists a Borel map $f: X\to Y$ and a Borel set $X_0 \subset X$ with $\mux(X\setminus X_0)=0$,
\begin{align}\label{eq;isom}
d_Y (f(x), f(x')) = d_X (x, x')
\end{align}
for any $x, x' \in X_0$, and $f_* \mux= \muy$. 
\end{defn}

\begin{rem}
The isomorphism defined in Definition~\ref{d:mmiso} does not see the topology. Nonetheless, this isomorphism preserves the synthetic Ricci curvature lower bound as well as related functional inequalities in extended metric measure spces, which we will discuss in Proposition~\ref{p:SI}.
In the framework of extended metric measure spaces, requiring further topological condition (e.g.~homeomorphism) is too strong for our purpose. In fact, in the case of abstract Wiener spaces~$(W, H, \mu)$ (see Section~\ref{ssec:WS}), any two abstract Wiener spaces with isometrically isomorphic Cameron–Martin Hilbert spaces are isomorphic in the sense of Definition~\ref{d:mmiso}, while there are many abstract Wiener spaces that are not necessarily homeomorphic to each other as the choice of the ambient Banach space $W$ is not topologically unique, e.g., $W$ can be taken as the space $C([0,1], \R)$ of continuous paths on $\R$ as well as $L^2([0,1], dx)$. See, e.g.,~\cite[Sec.~3.9 in Chap.~3]{Bo} for further details.
\end{rem}

We see that ``$\cong$" is an equivalence relation on the collection of all Polish extended metric measure spaces.
\begin{prop} \label{p:S}
The relation $X \cong Y$ is an equivalence relation. 
\end{prop}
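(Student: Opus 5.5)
The relation is reflexive, transitive and symmetric; I verify these three properties in turn. The first two are routine. Symmetry carries the real content, because Definition~\ref{d:mmiso} gives only a one-directional map, and I will produce the inverse map via the Lusin--Souslin theorem.

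\emph{Reflexivity and transitivity.} For reflexivity, take $f=\mathrm{id}_X$ and $X_0=X$.

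For transitivity, suppose $f\colon X\to Y$ with $X_0$ witnesses $X\cong Y$, and $g\colon Y\to Z$ with $Y_0$ witnesses $Y\cong Z$. I take the composition $g\circ f$, which is Borel, and the set
$$X_0':=X_0\cap f^{-1}(Y_0).$$
This set is Borel. It has full measure, since
$$\mu_X\bigl(f^{-1}(Y\setminus Y_0)\bigr)=\mu_Y(Y\setminus Y_0)=0$$
because $f_*\mu_X=\mu_Y$. For $x,x'\in X_0'$ we have $f(x),f(x')\in Y_0$, so
$$d_Z\bigl(g f(x),g f(x')\bigr)=d_Y\bigl(f(x),f(x')\bigr)=d_X(x,x').$$
Finally, $(g\circ f)_*\mu_X=g_*\mu_Y=\mu_Z$.

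\emph{Symmetry.} Let $f\colon X\to Y$ and $X_0$ witness $X\cong Y$.

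First, $f|_{X_0}$ is injective. Indeed, $f(x)=f(x')$ with $x,x'\in X_0$ gives $d_X(x,x')=d_Y(f(x),f(x'))=0$, hence $x=x'$, because $d_X$ is a genuine extended distance.

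Second, $X_0$ is a Borel subset of the Polish space $(X,\tau_X)$, so it is a standard Borel space. The restriction $f|_{X_0}$ is an injective Borel map into the Polish space $(Y,\tau_Y)$. By the Lusin--Souslin theorem, $Y_0:=f(X_0)$ is Borel in $Y$ and $h:=(f|_{X_0})^{-1}\colon Y_0\to X_0$ is Borel.

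Third, $Y_0$ has full measure:
$$\mu_Y(Y_0)=\mu_X\bigl(f^{-1}(f(X_0))\bigr)\ge\mu_X(X_0)=1.$$

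Now fix a point $x^*\in X$ and define $g:=h$ on $Y_0$ and $g:=x^*$ on $Y\setminus Y_0$. Then $g$ is Borel. For $y=f(x)$ and $y'=f(x')$ in $Y_0$,
$$d_X\bigl(g(y),g(y')\bigr)=d_X(x,x')=d_Y(y,y').$$

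It remains to check that $g_*\mu_Y=\mu_X$. For a Borel set $A\subset X$,
$$\mu_Y\bigl(g^{-1}(A)\bigr)=\mu_Y\bigl(g^{-1}(A)\cap Y_0\bigr)=\mu_Y\bigl(f(A\cap X_0)\bigr)=\mu_X\bigl(f^{-1}(f(A\cap X_0))\bigr).$$
By injectivity on $X_0$, the set $f^{-1}(f(A\cap X_0))$ meets $X_0$ exactly in $A\cap X_0$. Since $\mu_X(X\setminus X_0)=0$, this last quantity equals $\mu_X(A\cap X_0)=\mu_X(A)$.

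The main obstacle is this measurability step: the Borel measurability of $f(X_0)$ and of the inverse map. It is exactly where the Polish (standard Borel) hypothesis in Definition~\ref{d:PEMS} is needed. Everything else is bookkeeping with null sets.
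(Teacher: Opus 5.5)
Your proof is correct and follows the same route as the paper. Reflexivity and transitivity go by the identity and by composition. For symmetry, you invert $f|_{X_0}$ on $Y_0=f(X_0)$ via the Lusin--Souslin theorem (the paper cites Srivastava for the same fact), extend by a constant, and check distance and measure preservation. You also fill in details the paper only asserts: injectivity of $f|_{X_0}$, $\mu_Y(Y_0)=1$, the full-measure set $X_0\cap f^{-1}(Y_0)$ for transitivity, and discarding the null set $Y\setminus Y_0$ before computing $\mu_Y(g^{-1}(A))$.
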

\begin{proof}
Reflexivity is immediate, and transitivity follows by composition.
It remains only to prove symmetry~$X\cong Y \iff Y\cong X$.
Let $f$ and $X_0$ be a Borel map and a Borel set as in Definition~\ref{d:mmiso} respectively.
Fix a point $x_0 \in X$, and set~$Y_0 :=f(X_0)\subset Y$. Define a map $g: Y \to X$ as 
\begin{align*}
g(y):=
\begin{cases}
\ f^{-1}(y) &\quad \text{if } y \in Y_0 ;\\
\ x_0 & \quad \text{otherwise} \fstop
\end{cases}
\end{align*}
Since $f|_{X_0} :X_0 \to Y$ is injective, $Y_0\subset Y$ is a Borel set (see e.g.~Srivastava~\cite[Theorem~4.5.4, Proposition~4.5.1]{Sr}) and $g$ is a Borel map.

For \(y,y'\in Y_0\), choose \(x,x'\in X_0\) such that \(y=f(x)\) and \(y'=f(x')\). Then
\[
d_X(g(y),g(y'))
=
d_X(x,x')
=
d_Y(f(x),f(x'))
=
d_Y(y,y').
\]
Thus \(g\) preserves the distance on the full-measure set \(Y_0\).

Since \(\mu_Y(Y_0)=1\) and \(g\circ f=\mathrm{id}\) on \(X_0\), for every Borel set
\(A\subset X\),
\[
g_*\mu_Y(A)
=
\mu_Y(g^{-1}(A))
=
\mu_Y(f(A\cap X_0))
=
\mu_X(A\cap X_0)
=
\mu_X(A). \qedhere
\]
\end{proof}

In terms of the isomorphism defined in Definition~\ref{d:mmiso}, we can show that {\it any} Polish extended metric measure space can be constructed as the inverse limit of metric measure spaces. The following lemma is essential for the proof. 
\begin{lem}\label{lem;weaksep}
Let $X$ be a Polish extended metric measure space.
Then there exist a Borel set $K\subset X$ with $\mux(K)=1$ and $1$-Lipschitz Borel functions $\phi_n :X\to[0, \infty)$ with
\begin{align}\label{eq;weaksep}
d_X (x, x') = \sup_\niN |\phi_n (x)-\phi_n (x')|
\end{align}
for any $x, x'\in  K$.
\end{lem}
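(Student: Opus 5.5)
The plan is to use truncated distance functions to countably many $\tau_X$-compact sets, rather than distances to points. Distances to points will not work: $\mu_X$ need not charge any $d_X$-separable set. On abstract Wiener spaces the Cameron--Martin space is $\mu_X$-null, so a countable $d_X$-dense family of base points is unavailable.

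\textbf{Choice of $K$ and of the family.} Since $(X,\tau_X)$ is Polish, $\mu_X$ is Radon. I would choose an increasing sequence of $\tau_X$-compact sets $K_m$ with $\mu_X(K_m)>1-1/m$ and set $K:=\bigcup_m K_m$. This is a $\sigma$-compact, hence Borel, set with $\mu_X(K)=1$. Fix a countable base $\{B_j\}_{j\in\N}$ of $\tau_X$. For $j,m,M\in\N$ put $A_{j,m}:=\overline{B_j}\cap K_m$, which is $\tau_X$-compact. When $A_{j,m}\neq\emptyset$, define
\[
\phi_{j,m,M}(x):=d_X(x,A_{j,m})\wedge M,\qquad d_X(x,A):=\inf_{a\in A}d_X(x,a).
\]
This is a countable family, and I would re-enumerate it as $\{\phi_n\}$.

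\textbf{Regularity of each $\phi_n$.} By the triangle inequality, $x\mapsto d_X(x,A)$ is $1$-Lipschitz with respect to $d_X$ (including the case where it equals $+\infty$), and the truncation by $M$ keeps it $1$-Lipschitz and finite. For Borel measurability, note that $(x,a)\mapsto d_X(x,a)$ is lower semicontinuous on $X\times X$ by (\ref{d:PEMS2}) of Definition~\ref{d:PEMS}. Since $A$ is compact, the standard argument shows that $x\mapsto\min_{a\in A}d_X(x,a)$ is lower semicontinuous: the infimum is attained, and a sequence of near-minimisers has a convergent subnet in $A$. Hence each $\phi_n$ is $\tau_X$-Borel.

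\textbf{The identity \eqref{eq;weaksep} on $K$.} Fix $x,x'\in K$. Since each $\phi_n$ is $1$-Lipschitz, we get $\sup_n|\phi_n(x)-\phi_n(x')|\le d_X(x,x')$. For the reverse inequality, take $r<d_X(x,x')$ with $r\in\Q$ and $M>r$. By lower semicontinuity of $d_X$ on the product, there are $\tau_X$-open sets $U\ni x$ and $V\ni x'$ with $d_X>r$ on $U\times V$. Since $X$ is metrizable, hence regular, there is a basic set $B_j\ni x'$ with $\overline{B_j}\subset V$. Because $x'\in K$, we have $x'\in K_m$ for some $m$, so $x'\in A_{j,m}\subset V$. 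Then $\phi_{j,m,M}(x')=0$. On the other hand, $d_X(x,a)>r$ for all $a\in A_{j,m}$, and the minimum over the compact set $A_{j,m}$ is attained, so $\phi_{j,m,M}(x)\ge r$. Letting $r\uparrow d_X(x,x')$ gives equality, including the case $d_X(x,x')=\infty$, because $M$ ranges over all of $\N$.

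\textbf{Main obstacle.} The delicate point is the measurability and the attainment of the infimum, which is why I would use compact sets $A$ rather than arbitrary closed sets. For a general closed set, $d_X(\cdot,A)$ need not be Borel, and the bound $\phi(x)\ge r$ could fail, since the infimum need not be attained. Intersecting with the compacts $K_m$ is exactly what forces the restriction to the full-measure set $K$ in the statement.
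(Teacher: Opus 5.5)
Your proof is correct and is essentially the paper's argument. Both proofs exhaust $\mu_X$ by compacts, take truncated $d_X$-distances to small compact pieces, obtain Borel measurability from the lower semicontinuity of $d_X(\cdot,A)$ for compact $A$ (Lemma~\ref{l:LSD}), and separate $x,x'\in K$ via the lower semicontinuity of $d_X$; the only difference is that the paper uses closed $d_\tau$-balls around a countable dense subset of $K$ intersected with $K_i$, where you use closures of basic open sets intersected with $K_m$.

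One small correction to your closing remark: attainment of the infimum is not needed for $\phi_{j,m,M}(x)\ge r$, since the pointwise bound $d_X(x,a)>r$ on $A_{j,m}$ already gives it, so compactness is used only for measurability.
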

We first prove a preparatory lemma.
\begin{lem}\label{l:LSD}
Let $A \subset X$ be a $\tau_X$-compact subset and define $\rho_A(\cdot):=d_X(\cdot, A):=\inf_{x \in A}d_X(\cdot, x)$. Then, $\rho_A$ is $\tau_X$-lower semicontinuous.
\end{lem}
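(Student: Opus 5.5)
To prove Lemma~\ref{l:LSD}, we will show that every sublevel set $\{\rho_A\le t\}$, $t\in[0,\infty)$, is $\tau_X$-closed. This suffices because $\rho_A$ takes values in $[0,\infty]$, so lower semicontinuity is equivalent to closedness of all sublevel sets with finite $t$. We may assume $A\neq\emptyset$; otherwise $\rho_A\equiv+\infty$ and there is nothing to prove. Since $(X,\tau_X)$ is Polish, hence metrizable, it is enough to check sequential closedness.

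Fix $t\ge 0$ and a sequence $x_k\to x$ in $\tau_X$ with $\rho_A(x_k)\le t$. The first step is to pick, for each $k$, a point $a_k\in A$ with $d_X(x_k,a_k)\le t+1/k$; this is possible by the definition of the infimum, since $\rho_A(x_k)\le t<\infty$. The second step uses that $A$ is $\tau_X$-compact and, being a subset of a metrizable space, sequentially compact. Passing to a subsequence, we get $a_k\to a\in A$ in $\tau_X$. Then $(x_k,a_k)\to(x,a)$ in the product topology $\tau_{X\times X}$.

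The third step invokes property~\eqref{d:PEMS2} of Definition~\ref{d:PEMS}, namely the $\tau_{X\times X}$-lower semicontinuity of $d_X$. This gives
\[
\rho_A(x)\le d_X(x,a)\le \liminf_{k\to\infty} d_X(x_k,a_k)\le \liminf_{k\to\infty}\,(t+1/k)=t .
\]
Hence $x\in\{\rho_A\le t\}$, so the sublevel set is closed and $\rho_A$ is $\tau_X$-lower semicontinuous.

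The only point needing care is that lower semicontinuity of $d_X$ holds jointly in both variables, not merely separately. This is exactly what lets us pass to the limit along the moving pair $(x_k,a_k)$. Compactness of $A$ is what supplies the convergent $a_k$; without it, near-minimizers could escape, and the argument would fail.
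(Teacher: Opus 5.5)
Your proof is correct and follows essentially the same route as the paper. Both pick near-minimizers $a_k\in A$, extract a $\tau_X$-convergent subsequence by compactness, and conclude by the joint $\tau_{X\times X}$-lower semicontinuity of $d_X$. The only difference is cosmetic: you argue via closedness of the sublevel sets $\{\rho_A\le t\}$, whereas the paper bounds $\rho_A(x)$ directly by $\liminf_m \rho_A(x_m)$. Your formulation has the minor advantage of avoiding any bookkeeping for points where $\rho_A=+\infty$.
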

\begin{proof}
Take any sequence $x_m \xrightarrow{\tau_X}x$ and $L:=\liminf_{m \to \infty}\rho_A(x_m)$. We can take a (non-relabelled) subsequence of $(x_m)$ such that $L=\lim_{m \to \infty}\rho_A(x_m)$. For $m \in \N$, choose $a_m$ such that $d_X(x_m, a_m) \le \rho_A(x_m) + \frac{1}{m}$. Since $A$ is compact, we can extract a subsequence $a_{m_k} \xrightarrow{\tau_X} a \in A$. By the joint lower semicontinuity of $d_X$, we have 
$$d_X(x, a) \le \liminf_{k \to \infty}d_X(x_{m_k}, a_{m_k}).$$
By definition, $\rho_A(x) \le d_X(x, a)$. Thus, we get 
$$\rho_A(x) \le d_X(x, a ) \le  \liminf_{k \to \infty}d_X(x_{m_k}, a_{m_k}) \le  \liminf_{k \to \infty}(\rho_A(x_{m_k}) + \frac{1}{m_k}) =  \liminf_{k \to \infty}\rho_A(x_{m_k}) =L. \qedhere$$ 
\end{proof}
\begin{proof}[Proof of Lemma~\ref{lem;weaksep}]
For $i\in\N$, we take a compact set $K_i \subset X$ with $\mux(K_i)>1-(1/i)$.
We know that the Borel set $K:=\bigcup_{i\in\N} K_i \subset X$ satisfies $\mux(K)=1$.
We also take a countable dense set $D=\{x_j : j\in\N\}\subset K$ in $(K, \tau_X)$
and a complete metric $d_\tau :X\times X\to[0, \infty)$ which metrises $\tau_X$,
and let $L:=\Q\cap(0, \infty)$.
We use the notation
\[
\uB(x, r) := \{x' \in K : d_\tau (x, x')\le r\}
\]
for $x\in X$ and $r>0$.

For $i, j, k\in\N$ and $l\in L$, we define the function
\[
\phi_{i, j, k, l} :X\to[0, \infty), \quad \phi_{i, j, k, l} (x)= \min\{ d_X (x, \uB(x_j, 1/k)\cap K_i), l \},
\]
where we set $\phi_{i, j, k, l}(x)=l$ if $\uB(x_j, 1/k)\cap K_i =\emptyset$,
and rearrange them as
\[
\{\phi_{i, j, k, l} : i, j, k, l\in\N\} = \{\phi_n : n\in\N\}.
\]
Due to Lemma~\ref{l:LSD}, $\phi_n$ is Borel. 
We fix $x, x' \in K$ and $l\in L$ with $d_X (x, x')>l$.
Then, due to the $\tau_X$-lower semicontinuity of $d_X$,  there exists $\delta>0$ with $d_X (x'', x')>l$ for any $x''\in\uB(x, \delta)$.
We take $k\in\N$ with $2/k <\delta$, $x_j \in\uB(x, 1/k)$, and $i\in\N$ with $x\in K_i$.
Then 
\begin{align*}
\phi_{i, j, k, l} (x) &\le d_X (x, \uB(x_j, 1/k)\cap K_i) =0, \\
d_X (x', \uB(x_j, 1/k)\cap K_i) &\ge d_X (x', \uB(x_j, 1/k)) \ge d_X (x', \uB(x, \delta)) \ge l,
\end{align*} 
and hence
\[
|\phi_{i, j, k, l} (x)-\phi_{i, j, k, l} (x')| = \phi_{i, j, k, l} (x') =l.
\]
Since \(l<d_X(x,x')\) was arbitrary, we obtain
\[
\sup_n |\phi_n(x)-\phi_n(x')|\ge d_X(x,x').
\]

The reverse inequality follows from the fact that each \(\phi_n\) is \(1\)-Lipschitz with respect
to \(d_X\). Therefore \eqref{eq;weaksep} holds.
\end{proof}

In the following theorem, we show that any Polish extended metric measure space can be obtained as inverse limits of metric measure spaces. 
\begin{thm}\label{prop;mmisotolim}
For any Polish extended metric measure space $X$,
there exists a sequence $\seqn{X_n}$ of mm-spaces with $X_n \prec X_{n+1}$ for all $n\in\N$ and
$$X\cong\varprojlim X_n$$ in the sense of Definition~\ref{d:mmiso}.
\end{thm}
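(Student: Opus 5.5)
The plan is to build the approximating spaces from the functions of Lemma~\ref{lem;weaksep}. Let $K$ and $\{\phi_n\}_{n\in\N}$ be as in that lemma. For each $n$, define $\Phi_n := (\phi_1,\dots,\phi_n): X\to \R^n$, and equip $\R^n$ with the $\ell^\infty$ distance $\|v-w\|_\infty=\max_{i\le n}|v_i-w_i|$. Set $\nu_n := (\Phi_n)_*\mux$, a Borel probability measure on $\R^n$, because each $\phi_i$ is Borel. Let $X_n := \supp \nu_n$ (closed in $\R^n$), with $d_{X_n}$ the restricted $\ell^\infty$ distance and $\mu_{X_n}:=\nu_n$. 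Then $X_n$ is complete and separable, and $\mu_{X_n}$ is fully supported, so $X_n$ is an mm-space.

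Next we need the maps. Take $f_n: X_{n+1}\to X_n$ to be the coordinate projection forgetting the last coordinate. It is $1$-Lipschitz for the $\ell^\infty$ distances. It maps $\supp\nu_{n+1}$ into $\supp\nu_n$, since $f_n\circ\Phi_{n+1}=\Phi_n$, so $(f_n)_*\nu_{n+1}=\nu_n$ and the continuous image of the support lands in the support of the pushforward. Hence $X_n\prec X_{n+1}$, and Proposition~\ref{p:IPE} shows that $Y:=\varprojlim X_n$ is a Polish extended mm-space.

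For the isomorphism, define $F: X\to \prod_n \R^n$ by $F(x):=(\Phi_n(x))_n$. This map is Borel and compatible with the $f_n$. We must land in $Y$, which is the main technical point. The set $X':=\bigcap_n \Phi_n^{-1}(X_n)$ has full $\mux$-measure, because $\mux(\Phi_n^{-1}(\supp\nu_n))=\nu_n(\supp\nu_n)=1$ and it is a countable intersection. So set $X_0:=K\cap X'$, which is Borel with full measure. Define $F$ as above on $X_0$, and send $X\setminus X_0$ to a fixed point of $Y$; this keeps $F$ Borel. For $x,x'\in X_0$, the definition of $d_Y$ and \eqref{eq;weaksep} give
\[
d_Y(F(x),F(x'))=\sup_n \max_{i\le n}|\phi_i(x)-\phi_i(x')| = \sup_i|\phi_i(x)-\phi_i(x')| = d_X(x,x').
\]

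Finally we check that $F_*\mux=\mu_Y$. By uniqueness of the projective limit measure in Definition~\ref{def;invlimmeas}, it suffices that $(\pi_n)_*F_*\mux=\mu_{X_n}$. This holds because $\pi_n\circ F=\Phi_n$ $\mux$-a.e., and $(\Phi_n)_*\mux=\nu_n$.

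The only delicate point is ensuring that $F$ takes values in $Y$ (handled by restricting to the full-measure set $X'$) and that the pieces stay Borel. Beyond that, the argument is bookkeeping once Lemma~\ref{lem;weaksep} is available.
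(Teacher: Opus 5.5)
Your proposal is correct and follows essentially the same route as the paper's proof. Both use the maps $\Phi_n=(\phi_1,\dots,\phi_n)$ into $(\R^n,\ell^\infty)$, take the supports of $(\Phi_n)_*\mu_X$ as the $X_n$, send everything outside the full-measure Borel set $K\cap\bigcap_n\Phi_n^{-1}(X_n)$ to a fixed point, and identify the limit measure through its marginals. You also check that the coordinate projection maps $\supp\nu_{n+1}$ into $\supp\nu_n$, a detail the paper leaves implicit.
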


\begin{proof}
Let $\phi_n$ be Borel functions in Lemma~\ref{lem;weaksep}.
Let $\Phi_n: X \to \R^{n}$ be a Borel map defined by $\Phi_n(x):=(\phi_1(x), \phi_2(x),\ldots, \phi_n(x))$. 
Define 
$$\|(x_i)_{1 \le i \le n}-(y_i)_{1 \le i \le n}\|_{\infty, n}:=\max_{1 \le i \le n}|x_i-y_i|$$
for $(x_i)_{1 \le i \le n}, (y_i)_{1 \le i \le n} \in\R^n$ and define $X_n \subset \R^n$ by 
$$X_n:=\Bigl(\supp\bigl[ (\Phi_n)_*\mu_X\bigr], \|\cdot\|_{\infty, n}, (\Phi_n)_*\mu_X\Bigr).$$

We have $X_n \prec X_{n+1}$ with the projection map $\pi_{n+1, n}: \R^{n+1}\to \R^n$ restricted on $X_{n+1}$, thus $\seqn{(X_n, \pi_{n+1, n})}$ is an inverse system of mm-spaces.
Define $K' =\cap_{n \ge 1}\Phi_n^{-1}(X_n)$, which satisfies $\mu_X(K')=1$. 
Let $X'=(X', \tau_{X'}, d_{X'}, \mu_{X'}):=\varprojlim X_n$. Define $\Phi: X \to X' \subset\prod_{n=1}^\infty \R^n$ by 
\begin{align*}
\Phi(x)=
\begin{cases}
(\Phi_i(x))_{i=1}^\infty \quad & \text{if $x \in K'$}
\\
x_0 \quad & \text{otherwise},
\end{cases}
\end{align*}
where $x_0 \in X'$ is a fixed point. 
Then, the map $\Phi$ is Borel. For every \(n\),
\[
(\pi_n')_*\Phi_*\mu_X=(\Phi_n)_*\mu_X=\mu_{X_n},
\]
where \(\pi_n':X'\to X_n\) is the canonical projection. Since the inverse-limit measure is
uniquely determined by its finite-dimensional marginals, \(\Phi_*\mu_X=\mu_{X'}\).

Let \(K\subset X\) be the full-measure Borel set from Lemma~\ref{lem;weaksep}, and set
\[
\Omega:=K\cap K'.
\]
Then \(\mu_X(\Omega)=1\). For \(x,y\in\Omega\), we have
\[
\begin{aligned}
d_{X'}(\Phi(x),\Phi(y))
&=
\sup_n d_{X_n}(\Phi_n(x),\Phi_n(y))\\
&=
\sup_n \max_{1\le i\le n}|\phi_i(x)-\phi_i(y)|\\
&=
\sup_i |\phi_i(x)-\phi_i(y)|\\
&=
d_X(x,y).
\end{aligned}
\]
Thus \(\Phi\) is measure preserving and distance preserving on the full-measure Borel set
\(\Omega\). Hence \(X\cong X'\) in the sense of Definition~\ref{d:mmiso}.
%
%
\end{proof}

\section{Curvature-dimension condition for inverse limits} \label{sec:CD}
In this section, we prove the curvature-dimension condition ${\rm CD}(K,\infty)$ for inverse limits.  
Let $X=(X, \tau_X, d_X, \mu_X)$ be a Polish extended mm-space. Let $\mathcal P(X)$ denote the space of all Borel probability measures on $X$ and $\Pac(X):=\{\nu \in \mathcal P(X): \nu \ll \mu_{X}\}$ denote the subspace consisting of all absolutely continuous measures with respect to the reference measure $\mu_X$. We define the {\it relative entropy} $\ENT_{\mu_X}: \Pac(X) \to [0, \infty]$  as 
$$\Ent{\mu_X}{\nu}:=\int_{X} \rho \log \rho \diff \mu_X \quad \text{for $\nu \in \Pac$ with $\rho=\frac{\diff \nu}{\diff \mu_X}$} \fstop$$
We define the domain of $\ENT_{\mu_X}$ by $D(\ENT_{\mu_X}):=\{\nu \in \Pac(X): \Ent{\mu_X}{\nu}<\infty\}$. 
For $\nu, \sigma \in \mathcal P(X)$, define the space ${\rm Cpl}(\nu, \sigma) \subset \mathcal P(X \times X)$ of couplings by  
$${\rm Cpl}(\nu, \sigma) :=\{\c \in  \mathcal P(X \times X): \c(A \times X) = \nu(A) \comma \quad \c(X \times A) = \sigma(A)\comma  \quad \forall A \in \mathcal B(X)\} \fstop$$
For $\nu, \sigma \in \mathcal P(X)$, define {\it $2$-Wasserstein extended distance} as  
\begin{align} \label{d:Was}
W_{2, d_X}(\nu, \sigma)^2:=\inf_{c} \int_{X \times X} d_X(x, y)^2 \diff\c(x, y) \in [0, \infty]\comma
\end{align}
where the infimum is taken over all $\c \in {\rm Cpl}(\nu, \sigma)$. By e.g. \cite[Theorem~4.1]{Vi08} for lower semicontinuous cost functions (possibly taking $+\infty$), there exists an optimal coupling that attains the infimum of the right-hand side of \eqref{d:Was}. We denote by ${\rm OptCpl}_{d_{X}^2}( \nu, \sigma)$ the set of optimal couplings in ${\rm Cpl}(\nu, \sigma)$ with respect to the cost $d_{X}^2$. 

\begin{defn}[{\cite[Definition~9.1]{AGS;C}}]\label{def;CD}
Let $K \in \R$ and $X=(X, \tau_X, d_X, \mu_X)$ be a Polish extended mm-space.
We say that $X$ satisfies the \emph{$\CD(K, \infty)$ condition}
if for any $\nu_0, \nu_1\in\DomEnt{X}$ with $W_{2, d_X} (\nu_0, \nu_1)<\infty$
there exists a time-parametrised family $\{\nu_t\}_{t\in[0, 1]} \subset D(\ENT_{\mu_X})$  such that 
\begin{align*}
&W_{2, d_X} (\nu_s, \nu_t) = |s-t| W_{2, d_X}(\nu_0, \nu_1), \\
&\Entm{X}{\nu_t} \le (1-t) \Entm{X}{\nu_0} + t \Entm{X}{\nu_1} -\frac{K}{2} t(1-t) W_{2, d_X}(\nu_0, \nu_1)^2
\end{align*}
for any $s, t\in [0, 1]$.
\end{defn}

The main theorem of this section is the following: 
\begin{thm} \label{t:PCD}
Let $K \in \R$ and $\seqn{X_n}$ be a sequence of mm-spaces with $X_n \prec X_{n+1}$ for all $n$
and $X:=\varprojlim X_n$ its inverse limit extended mm-space. 
If $X_n$ satisfies CD$(K, \infty)$, then so does $X$.
\end{thm}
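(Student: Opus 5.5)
We will prove the stability by a projection--lifting argument. Given $\nu_0,\nu_1\in\DomEnt{X}$ with $W_{2,d_X}(\nu_0,\nu_1)<\infty$, we consider the marginals $\nu_i^n:=(\pi_n)_*\nu_i$ on $X_n$. Let $\rho_i=\diff\nu_i/\diff\mu_X$. Then $\diff\nu_i^n/\diff\mu_{X_n}$ is the conditional expectation $\mathbb E[\rho_i\mid\sigma(\pi_n)]$. By Jensen's inequality, $\Entm{X_n}{\nu_i^n}\le\Entm{X}{\nu_i}$. By the martingale convergence theorem (the $\sigma$-algebras $\sigma(\pi_n)$ increase and generate $\mathcal B(\tau_X)$), together with lower semicontinuity of entropy, $\Entm{X_n}{\nu_i^n}\uparrow\Entm{X}{\nu_i}$. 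Since $\pi_n$ is $1$-Lipschitz, $W_n:=W_{2,d_{X_n}}(\nu_0^n,\nu_1^n)\le W_{2,d_X}(\nu_0,\nu_1)<\infty$. We will also show $W_n\uparrow W:=W_{2,d_X}(\nu_0,\nu_1)$. Take optimal couplings $\c_n$ on $X_n\times X_n$ and lift them, via gluing along the maps $f_m$ (or via the disintegrations of $\nu_i$ over $\pi_n$), to couplings $\tilde\c_n\in\Cpl(\nu_0,\nu_1)$ with $(\pi_n\times\pi_n)_*\tilde\c_n=\c_n$. The family is tight because the marginals are fixed. For a limit point $\c$ and fixed $m$, the lower semicontinuity of $d_{X_m}^2\circ(\pi_m\times\pi_m)$ gives $\int d_{X_m}^2\diff\c\le\liminf_n W_n^2$. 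Monotone convergence in $m$ then yields $W^2\le\lim W_n^2$.

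Next we apply $\CD(K,\infty)$ on each $X_n$ to obtain geodesics $\{\nu_t^n\}$ in $(\P(X_n),W_{2,d_{X_n}})$ satisfying the entropy convexity inequality with $W_n$ and the entropies of $\nu_i^n$. To pass to the limit we need a consistent object on $X$. For each fixed $t$ and $m\le n$, set $\nu_t^{n,m}:=(f_m\circ\cdots\circ f_{n-1})_*\nu_t^n\in\P(X_m)$. Its entropy is at most $\Entm{X_n}{\nu_t^n}\le C:=\max_i\Entm{X}{\nu_i}+\frac{K^-}{8}W^2$, uniformly in $n$. Sublevel sets of entropy relative to a probability measure are weakly compact (uniform integrability of densities). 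A diagonal argument over $m\in\N$ and $t\in\Q\cap[0,1]$ therefore produces a subsequence along which $\nu_t^{n,m}\to\nu_t^m$ for all $m$ and rational $t$. These limits are consistent, i.e. $(f_m)_*\nu_t^{m+1}=\nu_t^m$. Kolmogorov extension (as in Definition~\ref{def;invlimmeas}) then gives $\nu_t\in\P(X)$ with $(\pi_m)_*\nu_t=\nu_t^m$, and $\nu_t$ satisfies $\Entm{X}{\nu_t}=\sup_m\Entm{X_m}{\nu_t^m}\le C$, so $\nu_t\ll\mu_X$.

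For the inequalities, we use that $\ENT_{\mu_{X_m}}$ is weakly lower semicontinuous and that the push-forward does not increase entropy. This gives
\[
\Entm{X_m}{\nu_t^m}\le\liminf_n\Bigl[(1-t)\Entm{X}{\nu_0}+t\Entm{X}{\nu_1}-\tfrac K2t(1-t)W_n^2\Bigr],
\]
and the right-hand side converges to the desired bound since $W_n\to W$. Taking the supremum over $m$ gives the entropy inequality. For the distances, lower semicontinuity of $W_{2,d_{X_m}}$ under weak convergence and $1$-Lipschitzness give $W_{2,d_{X_m}}(\nu_s^m,\nu_t^m)\le\liminf_n W_{2,d_{X_n}}(\nu_s^n,\nu_t^n)=|s-t|W$. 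Letting $m\to\infty$ and applying the lifting argument of the first paragraph to the pair $(\nu_s,\nu_t)$ yields $W_{2,d_X}(\nu_s,\nu_t)\le|s-t|W$. The triangle inequality then upgrades this to equality. Irrational times are handled by defining $\nu_t$ as a $W_{2,d_X}$-limit along rationals; this requires completeness of $(\P(X),W_{2,d_X})$ on finite-distance classes. Alternatively, we include all $t$ by taking a sequentially compact choice via Arzel\`a--Ascoli in the weak topology of each $X_m$.

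We expect the main obstacle to be the identity $W_{2,d_X}(\nu,\sigma)=\lim_n W_{2,d_{X_n}}((\pi_n)_*\nu,(\pi_n)_*\sigma)$, that is, the lifting of couplings and the tightness and lower semicontinuity argument with the extended, merely lower semicontinuous cost $d_X^2=\sup_m d_{X_m}^2\circ\pi_m$. We will handle it first as a separate lemma, constructing the lifted couplings through disintegration of $\nu_i$ with respect to $\pi_n$ and using Prokhorov on the Polish space $X\times X$.
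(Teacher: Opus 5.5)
Your argument is correct, but it takes a different route from the paper's.

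\textbf{The paper's route.} The paper works on $X$ itself. It replaces $\nu_i$ by the cylinder approximations $(\rho_{n,i}\circ\pi_n)\mu_X$, which converge weakly by the martingale lemma. It lifts couplings through a disintegration of $\mu_X$ over $\pi_n$, which yields $W_{2,d_{X_n}}=W_{2,d_n}$ on these lifts. It then lifts each $\CD$ geodesic $\nu_{n,t}$ to $(\rho_{n,t}\circ\pi_n)\mu_X$ and extracts weak limits directly in $\P(X)$ from the uniform entropy bound. Finally it gets $W_{2,d_X}(\nu_q,\nu_r)\le |q-r|L$ by a lower-semicontinuity argument with the monotone costs $d_n^2\uparrow d_X^2$, along couplings whose marginals vary with $n$.

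\textbf{Your route.} You stay on the finite levels. You glue disintegrations of $\nu_0,\nu_1$ themselves, so your lifted couplings have fixed marginals and tightness is automatic. This removes the need for the approximation lemma. It also gives
\[
W_{2,d_X}(\nu,\sigma)=\lim_m W_{2,d_{X_m}}\bigl((\pi_m)_*\nu,(\pi_m)_*\sigma\bigr)
\quad\text{for all }\nu,\sigma\in\P(X),
\]
not only for finite-entropy measures, and hence weak lower semicontinuity of $W_{2,d_X}$ for free. You then pass to the limit in the pushed-down geodesics on each $X_m$, where lower semicontinuity of $W_2$ and of the entropy is classical, and reassemble the limits by Kolmogorov extension. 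The paper's approach has the compensating advantage that its lifted identity $W_{2,d_{X_n}}=W_{2,d_n}$ is reused later, in Lemma~\ref{l:Hn} and the $\EVI$ stability.

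\textbf{Two points you should justify.}

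First, you assert that a consistent family with $\sup_m\operatorname{Ent}_{\mu_{X_m}}(\nu^m_t)<\infty$ comes from some $\nu_t\ll\mu_X$ with $\operatorname{Ent}_{\mu_X}(\nu_t)=\sup_m\operatorname{Ent}_{\mu_{X_m}}(\nu^m_t)$. Writing $\nu_t^m=\rho_t^m\mu_{X_m}$, consistency makes $\rho^m_t\circ\pi_m$ a $\mu_X$-martingale. The entropy bound makes it uniformly integrable, and its $L^1$-limit is the density of $\nu_t$. Equality of entropies then follows from Jensen and Fatou. Incidentally, this makes the separate Kolmogorov step unnecessary.

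Second, for irrational times you invoke completeness of $(\P(X),W_{2,d_X})$ on finite-distance classes. This is true here but needs an argument, for instance: project to each $X_m$, use completeness there, reassemble, then use lower semicontinuity. The paper avoids this by taking a weak limit point of $\nu_{q_j}$, which exists by the entropy bound, and proving uniqueness via joint lower semicontinuity of $W_{2,d_X}$. You should also say explicitly that the entropy inequality at irrational $t$ follows from lower semicontinuity of the entropy along that approximation.
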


\subsection{Proof of Theorem \ref{t:PCD}}
We separate the proof in several steps. 
As the following statement will be frequently used below, we recall the precise statement here.
\begin{thm}[{\cite[Theorem 5.1]{AES}}] \label{t:AES}
Let $X=(X, \tau_X, d_X, \mu_X)$ be an extended mm-space in the sense of Definition~\ref{Def:ext-mtmp}.
Let $I$ be a directed set and $\{\nu_i\}_{i \in I}, \{\sigma_i\}_{i \in I} \subset \mathcal P(X)$
weakly converge to $\nu, \sigma$ respectively. Then, for any choice $\{\c_i\}_{i \in I} \subset {\rm Cpl}(\nu_i, \sigma_i)$, one has:
\begin{enumerate}
\item the net $\{\c_i\}_{i \in I}$ has a limit point, say, $\c$, with respect to the weak convergence and $\c$ belongs to ${\rm Cpl}(\nu, \sigma)$;
\item if $\c_i$ converges weakly to $\c$, and $a_i: X \times X \to [0, \infty]$ is a monotone nondecreasing family of $(\tau_X \times \tau_X)$-lower semicontinuous function, then 
$$\liminf_{i \to \infty}	\int_X a_i \diff\c_i \ge \int_X a \diff\c, \quad \text{ where $a:=\sup_{i \in I} a_i$}\fstop$$
\end{enumerate}
\end{thm}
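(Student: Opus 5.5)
Although this is a result quoted from \cite{AES}, here is how I would prove it. The plan is to reduce everything to weak convergence on a compact space and then to use the Radon property of the limits. This avoids Prokhorov-type tightness, which can fail for nets.

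\emph{Step 1: compactification.} By property~\eqref{Def:ext-mtmp3} of Definition~\ref{Def:ext-mtmp}, $\tau_X$ is generated by the family $\lip_b(X,\tau_X,d_X)$. Hence $(X,\tau_X)$ is completely regular, and the map $\iota(x):=(f(x))_{f}$, with $f$ ranging over $\lip_b$, embeds $X$ homeomorphically into the compact product $\hat X:=\prod_{f}[\inf f,\sup f]$. Push the couplings $\c_i$ forward to $\hat X\times\hat X$. Radon probability measures on a compact Hausdorff space form a weak-$*$ compact set, so the net $\{\c_i\}$ has a limit point $\hat\c$ there. Its marginals are limit points of $\iota_*\nu_i$ and $\iota_*\sigma_i$, which are $\iota_*\nu$ and $\iota_*\sigma$.

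\emph{Step 2: the limit lives on $X\times X$.} Since $\nu$ and $\sigma$ are Radon, for $\varepsilon>0$ choose compact sets $K_1,K_2\subset X$ with $\nu(K_1^c),\sigma(K_2^c)<\varepsilon$. Their images under $\iota$ are compact in $\hat X$, and
\[
\hat\c(\iota K_1\times\iota K_2)\ge 1-\nu(K_1^c)-\sigma(K_2^c)>1-2\varepsilon .
\]
Letting $\varepsilon\to 0$ shows that $\hat\c$ is concentrated on a $\sigma$-compact subset of $\iota(X)\times\iota(X)$. It therefore defines a Radon probability measure $\c$ on $X\times X$ with $\c\in\Cpl(\nu,\sigma)$. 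For bounded $\tau_X$-continuous test functions on $X\times X$, weak convergence along the subnet follows by approximation on these compact sets. This gives part~(1).

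\emph{Step 3: lower semicontinuity, part~(2).} Fix $j\in I$ and $M>0$. For $i\ge j$ monotonicity gives $a_i\ge a_j\wedge M$, so
\[
\liminf_i\int a_i\diff\c_i\ \ge\ \liminf_i\int a_j\wedge M\diff\c_i .
\]
Write $\int g\diff\c=\int_0^M\c(\{g>t\})\diff t$ with $g=a_j\wedge M$. It then suffices to show $\c(U)\le\liminf_i\c_i(U)$ for every open $U$, and to apply Fatou in $t$. Every open $U$ is a directed union of cozero sets $\{h>0\}$ with $h$ continuous, $0\le h\le 1$, because the product space is completely regular. Radon measures are $\tau$-additive, so $\c(U)=\sup_h\c(\{h>0\})$. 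Moreover $\c(\{h>0\})=\lim_k\int (kh)\wedge 1\diff\c$, and each such integral is a limit of $\int(kh)\wedge1\diff\c_i\le\c_i(U)$. Finally take the supremum over $M$ and then over $j$, using monotone convergence along the directed family $a_j\uparrow a$; this is again justified by $\tau$-additivity, since $\{a>t\}=\bigcup_j\{a_j>t\}$ is a directed union of open sets.

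The main obstacle is Step~2, namely showing that no mass of the limit coupling escapes to $\hat X\setminus\iota(X)$ when only nets are available. The key point is that only the Radon property of the \emph{limit} marginals $\nu$ and $\sigma$ is needed, not uniform tightness of the net. The $\tau$-additivity argument in Step~3 is the secondary delicate point.
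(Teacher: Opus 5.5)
The paper gives no proof of this statement (it is quoted from \cite{AES}), so your argument has to stand on its own. Its architecture is sound. You compactify through $\lip_b(X,\tau_X,d_X)$ and take a cluster point $\hat\c$ in the compact set of Radon probabilities on $\hat X\times\hat X$. You then use that the \emph{limit} marginals are Radon to see that $\hat\c$ sits on a $\sigma$-compact subset of $\iota(X)\times\iota(X)$, and derive part~(2) from $\tau$-additivity. Two steps, however, do not work as written.

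The substantive gap is the last sentence of Step~2. Suppose you Tietze-extend $\phi\circ(\iota\times\iota)^{-1}$ from $\iota K_1\times\iota K_2$ to $\hat\phi\in C(\hat X\times\hat X)$ and bound the error by $2\|\phi\|_\infty\,\c_i\bigl((K_1\times K_2)^c\bigr)$. Then you need this bound to be small for all large $i$, i.e.\ eventual tightness of the net, which is exactly what you said is unavailable. The missing idea is to control the mass outside an $\hat X$-\emph{neighbourhood} of the compact set instead.

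The set $O:=\{z\in X\times X:\ |\hat\phi((\iota\times\iota)(z))-\phi(z)|<\varepsilon\}$ is open and contains $K_1\times K_2$. Since $\iota\times\iota$ is an embedding, $O=(\iota\times\iota)^{-1}(\hat W)$ for some open $\hat W\supset\iota K_1\times\iota K_2$. Apply the portmanteau theorem on the compact space to the closed set $\hat F:=(\hat X\times\hat X)\setminus\hat W$. This gives $\limsup(\iota\times\iota)_*\c_i(\hat F)\le\hat\c(\hat F)<2\varepsilon$ along the subnet, and therefore
\[
\limsup\Bigl|\int\phi\diff\c_i-\int\phi\diff\c\Bigr|\le 2\varepsilon+8\varepsilon\|\phi\|_\infty .
\]
Equivalently, you can run your Step~3 argument with cozero functions of the form $\hat h\circ(\iota\times\iota)$, $\hat h\in C(\hat X\times\hat X)$; these suffice because $\iota\times\iota$ is an embedding. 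As written, Step~3 uses $\int h\diff\c_i\to\int h\diff\c$ for arbitrary $h\in C_b(X\times X)$. That is the hypothesis of part~(2), but it is precisely what part~(1) still has to prove.

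The second point is in Step~3: Fatou's lemma and monotone convergence are false for nets. For example, over finite sets $F\subset[0,M]$ directed by inclusion, $\mathbf 1_F\uparrow 1$ pointwise while $\int_0^M\mathbf 1_F\diff t=0$. They hold here only because $t\mapsto\c_i(\{g>t\})$ and $t\mapsto\c(\{a_j>t\})$ are nonincreasing. Replacing the $\diff t$-integrals by Riemann sums on the grid $t_k=kM/m$ costs at most $M/m$. The $\liminf_i$ of a finite sum dominates the sum of the $\liminf$'s. Directedness yields a single $j$ with $\c(\{a_j>t_k\})\ge\c(\{a>t_k\})-\varepsilon$ for all $k$ at once. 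You need to say this explicitly.

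With these two repairs your proof goes through, using that $\c$ is Radon. That is built into the framework of \cite{AES} and is automatic in the Polish setting where the paper applies the result.
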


In the following lemmas, we always assume the same condition as in Theorem~\ref{t:PCD}. 
Recall that $\pi_{n}: X\to X_n$ denotes the canonical projection. 
\begin{lem}\label{l;weakconv}
For~$\nu\in D(\ENT_{\mu_X})$, define~$\nu_n :=(\rho_n \circ\pi_n)\mu_X \in\P(X)$,
where $\rho_n \in L^1 (\mu_{X_n})$ is the function such that $\underline{\nu}_n:=(\pi_n)_* \nu=\rho_n \mu_{X_n}\in\P(X_n)$.
Then $\seqn{\nu_n}$ converges weakly to $\nu$.
\end{lem}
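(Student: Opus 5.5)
The plan is a martingale convergence argument. Let $\rho:=\frac{\diff\nu}{\diff\mu_X}$ and let $\mathcal F_n:=\pi_n^{-1}(\mathcal B(X_n))$ be the $\sigma$-algebra on $X$ generated by $\pi_n$. Since $f_n\circ\pi_{n+1}=\pi_n$, the family $\{\mathcal F_n\}$ is increasing.

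\textbf{Step 1 (identify $\rho_n\circ\pi_n$).} We check that $\rho_n\circ\pi_n=\mathbb E_{\mu_X}[\rho\mid\mathcal F_n]$. Indeed, for $B\in\mathcal B(X_n)$,
\[
\int_{\pi_n^{-1}(B)}\rho\diff\mu_X=\nu(\pi_n^{-1}(B))=\underline{\nu}_n(B)=\int_B\rho_n\diff\mu_{X_n}=\int_{\pi_n^{-1}(B)}\rho_n\circ\pi_n\diff\mu_X,
\]
where the last equality uses $(\pi_n)_*\mu_X=\mu_{X_n}$. In particular $\nu_n\in\P(X)$, and $\{\rho_n\circ\pi_n\}_n$ is a uniformly integrable martingale with respect to $\{\mathcal F_n\}$.

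\textbf{Step 2 (the limit $\sigma$-algebra).} We show that $\bigvee_n\mathcal F_n=\mathcal B(\tau_X)$. The topology $\tau_X$ is the relative product topology, so it has a countable base of sets of the form $\pi_n^{-1}(U)$ with $U\subset X_n$ open. Each such set lies in $\bigcup_n\mathcal F_n$. Since $(X,\tau_X)$ is Polish, hence second countable, every open set is a countable union of these basic sets. Therefore $\mathcal B(\tau_X)\subset\bigvee_n\mathcal F_n$, and the reverse inclusion holds because each $\pi_n$ is continuous.

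\textbf{Step 3 (convergence).} By L\'evy's upward theorem, $\rho_n\circ\pi_n\to\mathbb E[\rho\mid\bigvee_n\mathcal F_n]=\rho$ in $L^1(\mu_X)$. For any bounded $\tau_X$-continuous (indeed any bounded Borel) $g$,
\[
\Big|\int g\diff\nu_n-\int g\diff\nu\Big|\le\|g\|_\infty\,\|\rho_n\circ\pi_n-\rho\|_{L^1(\mu_X)}\to0.
\]
This gives weak convergence, in fact convergence in total variation.

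The finite-entropy hypothesis is not needed here beyond $\rho\in L^1$. The only point needing care is Step 2, namely that the cylinder $\sigma$-algebras generate the full Borel $\sigma$-algebra of $\tau_X$, and second countability settles it. If one prefers to avoid martingale theory, an alternative route is to approximate $\rho$ in $L^1$ by cylinder functions $h\circ\pi_m$, which are dense by a monotone class argument, and use that conditional expectation is an $L^1$-contraction fixing $h\circ\pi_m$ for $n\ge m$.
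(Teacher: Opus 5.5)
Your proposal is correct and is essentially the paper's first proof: you identify $\rho_n\circ\pi_n$ as the conditional expectation of $\rho$ with respect to $\pi_n^{-1}\mathcal B(X_n)$ and apply martingale convergence in $L^1(\mu_X)$, which even gives convergence in total variation. Your Step~2 verifies, via the countable base of sets $\pi_n^{-1}(U)$, that these $\sigma$-algebras generate $\mathcal B(\tau_X)$; the paper only asserts this point.
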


We give two proofs of Lemma~\ref{l;weakconv} for the convenience of the reader.
\begin{proof}[Proof of Lemma~\ref{l;weakconv} by Martingale Convergence]
Let $d\nu=\rho d\mu_X$.
Then, $\rho_{n}\circ \pi_n$ is the conditional expectation of $\rho$ with respect to the filtration of the $\sigma$-algebra $\cB_n(X)=\pi_n^{-1} \cB(X_n)$. Indeed, for any $A_n \in \cB(X_n)$, we have that 
\begin{align*}
 \int_{\pi_n^{-1}(A_n)} \rho_{n}\circ \pi_n \diff \mu_{X} = \int_{A_n} \rho_{n} \diff \mu_{X_n} = \underline{\nu}_{n}(A_n)= \nu(\pi_n^{-1}(A_n)) =  \int_{\pi_n^{-1}(A_n)} \rho \diff \mu_X \fstop
\end{align*}
Since $\sigma(\cup_{n \in \N} \cB_n(X))=\cB(X)$, the Martingale Convergence yields that $\rho_{n}\circ \pi_n$ converges to $\rho$ in $L^1(X, \mu_X)$. Thus, $\nu_n$ converges weakly to $\nu$ in $\P(X)$.
\end{proof}

\begin{proof}[Proof of Lemma~\ref{l;weakconv} by Portmanteau theorem]
We start with the following.
\begin{clm}\label{clm;weakconv}
Suppose that $A\subset X$ is closed, $K\subset X$ is compact, and $A\cap K=\emptyset$.
Then
\[
\limin d_{X_n} (\pi_n (A), \pi_n (K))>0.
\]
\end{clm}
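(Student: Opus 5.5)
The plan is to argue by contradiction, using two ingredients: the monotonicity of the projected distances coming from the $1$-Lipschitz bonding maps, and the fact that $\tau_X$ is the product topology, so that convergence in $X$ is exactly coordinatewise convergence in every $X_m$.

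First, I record that the sequence $n\mapsto d_{X_n}(\pi_n(A),\pi_n(K))$ is non-decreasing. Indeed, for $a\in A$ and $k\in K$ we have $\pi_n=f_n\circ\pi_{n+1}$, and each $f_n$ is $1$-Lipschitz, so
\[
d_{X_n}(\pi_n(a),\pi_n(k))\le d_{X_{n+1}}(\pi_{n+1}(a),\pi_{n+1}(k)).
\]
Taking the infimum over $a\in A$ and $k\in K$ gives the claim. Iterating, for $m\le n$ we get $d_{X_m}(\pi_m(a),\pi_m(k))\le d_{X_n}(\pi_n(a),\pi_n(k))$. In particular the $\liminf$ in the statement is in fact a limit in $[0,\infty]$, and it suffices to rule out that this limit equals $0$.

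Now suppose, for contradiction, that $d_{X_n}(\pi_n(A),\pi_n(K))\to 0$. For each $n$ I choose $a^n\in A$ and $k^n\in K$ with
\[
d_{X_n}(\pi_n(a^n),\pi_n(k^n))<\varepsilon_n, \qquad \varepsilon_n\to 0 .
\]
Since $K$ is $\tau_X$-compact and $(X,\tau_X)$ is Polish (hence metrisable, by Proposition~\ref{p:IPE}), I pass to a subsequence with $k^{n}\to k\in K$ in $\tau_X$. This means $\pi_m(k^n)\to\pi_m(k)$ in $X_m$ for each fixed $m$.

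Next, fix $m$. For $n\ge m$ in the subsequence, the monotonicity above gives
\[
d_{X_m}(\pi_m(a^n),\pi_m(k))\le d_{X_n}(\pi_n(a^n),\pi_n(k^n))+d_{X_m}(\pi_m(k^n),\pi_m(k))<\varepsilon_n+d_{X_m}(\pi_m(k^n),\pi_m(k))\to 0 .
\]
Hence $\pi_m(a^n)\to\pi_m(k)$ for every $m$. This means $a^n\to k$ in the product topology, i.e.\ in $\tau_X$. Since $A$ is closed, $k\in A$, which contradicts $A\cap K=\emptyset$.

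The only delicate point is the transfer from level $n$ down to a fixed level $m$. Smallness of the distance at the moving level $n$ says nothing directly about convergence in the product topology. The $1$-Lipschitz monotonicity is exactly what converts it into smallness at every fixed level $m$, and that in turn yields coordinatewise convergence.
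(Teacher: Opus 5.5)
Your proof is correct and follows the paper's argument. You argue by contradiction, extract a $\tau_X$-convergent subsequence $k^{n}\to k$ in the compact set $K$, and show that the corresponding points of $A$ converge to $k$ in every coordinate, which contradicts the closedness of $A$; the only difference is that you state explicitly the $1$-Lipschitz monotonicity $d_{X_m}(\pi_m(a),\pi_m(b))\le d_{X_n}(\pi_n(a),\pi_n(b))$ for $m\le n$, which the paper uses without comment when passing from the moving level $n(k)$ to a fixed level.
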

\begin{proof}
Suppose that there exist sequences $\seqn{x_n}$ and $\seqn{y_n}$ of points with
$x_n \in A$ and $y_n \in K$ for all $n$ and $\limin d_{X_n} (\pi_n (x_n), \pi_n (y_n))=0$.
We may assume that a subsequence $\seqk{y_{n(k)}}$ of $\seqn{y_n}$ converges to some $y\in K$ and that
$d_{X_{n(k)}} (\pi_{n(k)}(x_{n(k)}), \pi_{n(k)}(y_{n(k)})) \to 0$ as $\kti$. 
Then $d_{X_n} (\pi_n (y_{n(k)}), \pi_n (y))\to 0$. Thus,  $d_{X_n} (\pi_n (x_{n(k)}), \pi_n (y))\to 0$ as $\kti$ for any $n$.
This means that $\seqk{x_{n(k)}}$ converges to $y$, hence $y\in A$,
but this is a contradiction.
\end{proof}

Let $A\subset X$ be a closed set and $\ep>0$.
Take a compact set $K\subset X\setminus A$ with $\nu(A\sqcup K)>1-\ep$. 
Then $\overline{\pi_n (A)}\cap\pi_n (K)=\emptyset$ by Claim~\ref{clm;weakconv} and
\begin{align*}
\nu_n (A)
&\le \nu_n (\pi_n^{-1}(\overline{\pi_n (A)})) = \int_{\pi_n^{-1}(\overline{\pi_n (A)})} {\rho}_n \circ\pi_n \,d\mux
= \underline{\nu}_n (\overline{\pi_n (A)})
= \nu(\pi_n^{-1} (\overline{\pi_n (A)})) \\
&\le 1-\nu(\pi_n^{-1} (\overline{\pi_n (K)}))
\le 1-\nu(K) <\nu(A)+\ep
\end{align*}
for any large $n$.
Thus $\limsn \nu_n (A)\le\nu(A)$  and the Portmanteau theorem implies that $\seqn{\nu_n}$ converges weakly to $\nu$.
\end{proof}

\begin{lem} \label{l:MI}
Let $\nu_0, \nu_1 \in \DomEnt{X}$ and $W_{2, d_{X}} (\nu_0, \nu_1)<\infty$. Define
$\nu_{n, i} :=(\pi_n)_* \nu_i \in \mathcal P(X_n)$ $(i=0, 1)$.
Then
\[
\Ent{\mu_{X_n}}{\nu_{n, i}} \le  \Ent{\mu_X}{\nu_i}  \quad\text{ and }\quad
\lim_{n \to \infty} W_{2, d_{X_n}} (\nu_{n, 0}, \nu_{n, 1}) = W_{2, d_X} (\nu_0, \nu_1) \fstop
\]
\end{lem}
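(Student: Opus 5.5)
The entropy inequality is a Jensen-type statement. As in the martingale proof of Lemma~\ref{l;weakconv}, write $\nu_i=\rho\,\mu_X$ and $\nu_{n,i}=\rho_n\mu_{X_n}$. Then $\rho_n\circ\pi_n=\mathbb E_{\mu_X}[\rho\mid\cB_n(X)]$, where $\cB_n(X)=\pi_n^{-1}\cB(X_n)$. Since $u\mapsto u\log u$ is convex, the conditional Jensen inequality gives
\[
\Ent{\mu_{X_n}}{\nu_{n,i}}=\int_X (\rho_n\circ\pi_n)\log(\rho_n\circ\pi_n)\diff\mu_X\le\int_X\rho\log\rho\diff\mu_X=\Ent{\mu_X}{\nu_i}.
\]
An equivalent route is the data-processing inequality for relative entropy under the push-forward $(\pi_n)_*$, using $(\pi_n)_*\mu_X=\mu_{X_n}$. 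Integrability is not an issue: $u\log u\ge -1/e$, so all integrals are well defined.

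For the Wasserstein limit, the upper bound and monotonicity come first. Let $\c\in\mathrm{OptCpl}_{d_X^2}(\nu_0,\nu_1)$. Then $(\pi_n\times\pi_n)_*\c\in\Cpl(\nu_{n,0},\nu_{n,1})$, and $d_{X_n}(\pi_n x,\pi_n y)\le d_X(x,y)$. Hence
\[
W_{2,d_{X_n}}(\nu_{n,0},\nu_{n,1})\le W_{2,d_X}(\nu_0,\nu_1)<\infty.
\]
The same argument with $f_n$ in place of $\pi_n$ shows that $n\mapsto W_{2,d_{X_n}}(\nu_{n,0},\nu_{n,1})$ is nondecreasing. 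So the limit exists and satisfies $\lim_n W_{2,d_{X_n}}\le W_{2,d_X}$.

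The main step is the reverse inequality $\liminf_n W_{2,d_{X_n}}(\nu_{n,0},\nu_{n,1})\ge W_{2,d_X}(\nu_0,\nu_1)$, which I would get from Theorem~\ref{t:AES}. First the couplings must be lifted from $X_n\times X_n$ to $X\times X$ with the right marginals. Choose optimal $\underline{\c}_n\in\mathrm{OptCpl}(\nu_{n,0},\nu_{n,1})$ on $X_n\times X_n$. Disintegrate $\nu_i$ with respect to $\pi_n$, writing $\nu_i=\int_{X_n}\nu_i^{z}\diff\nu_{n,i}(z)$ with $\nu_i^z$ concentrated on $\pi_n^{-1}(z)$; this is possible since $X$ is Polish. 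Then set
\[
\c_n:=\int_{X_n\times X_n}\nu_0^{z}\otimes\nu_1^{w}\diff\underline{\c}_n(z,w).
\]
This is a coupling of $\nu_0$ and $\nu_1$ themselves, with $(\pi_n\times\pi_n)_*\c_n=\underline{\c}_n$. With this gluing one does not even need the weakly convergent approximations $\nu_n$ of Lemma~\ref{l;weakconv}.

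Next take the costs $a_n(x,y):=d_{X_n}(\pi_nx,\pi_ny)^2$. They are $\tau_X\times\tau_X$-continuous and nondecreasing in $n$, with $\sup_n a_n=d_X^2$ by \eqref{eq;dx}. By Proposition~\ref{prop;invlimextsp}, $X$ is an extended mm-space, so Theorem~\ref{t:AES}(1) applied to the constant nets $\nu_0,\nu_1$ yields a weak limit point $\c\in\Cpl(\nu_0,\nu_1)$ along a subnet. Theorem~\ref{t:AES}(2) then gives
\[
W_{2,d_X}(\nu_0,\nu_1)^2\le\int d_X^2\diff\c\le\liminf\int a_n\diff\c_n=\liminf W_{2,d_{X_n}}(\nu_{n,0},\nu_{n,1})^2.
\]
Because the sequence is monotone, the subnet limit equals the full limit.

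I expect the main technical obstacle to be the measurable gluing, namely the Borel measurability of $z\mapsto\nu_i^z$ and of the integrated product measure. A fallback is to avoid disintegration entirely. Lift $\underline{\c}_n$ to a plan on the product with marginals $\nu_0^{(n)}:=(\rho_{0,n}\circ\pi_n)\mu_X$ and $\nu_1^{(n)}$, for example via conditional independence given $\cB_n(X)$. Then apply Lemma~\ref{l;weakconv} to get $\nu_i^{(n)}\to\nu_i$ weakly, and use Theorem~\ref{t:AES} with these varying marginals; this is exactly the situation that theorem is designed for.
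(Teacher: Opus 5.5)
Your proof is correct, and it lifts couplings differently from the paper.

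\textbf{The paper's route.} The paper disintegrates the reference measure $\mu_X$ along $\pi_n$. It lifts a plan in $\Cpl(\nu_{n,0},\nu_{n,1})$ to a coupling of the \emph{approximating} measures $\nu_i^{(n)}=(\rho_{n,i}\circ\pi_n)\mu_X$, which yields the identity \eqref{e:W-ident-MI}. It then needs three further steps: Lemma~\ref{l;weakconv} to get $\nu_i^{(n)}\rightharpoonup\nu_i$, a tightness/Prokhorov argument for optimal plans with varying marginals, and a separate $\limsup$ bound by projection.

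\textbf{Your route.} You disintegrate $\nu_i$ itself. The glued plan $\int\nu_0^{z}\otimes\nu_1^{w}\,\diff\gamma_n(z,w)$, with $\gamma_n$ optimal on $X_n\times X_n$, already lies in $\Cpl(\nu_0,\nu_1)$ and projects onto $\gamma_n$. This gives several simplifications:
\begin{itemize}
\item Tightness is automatic.
\item Lemma~\ref{l;weakconv} is not used.
\item Your monotonicity observation via $f_n$ makes the limit exist outright, so only the $\liminf$ bound remains.
\end{itemize}
Since the marginals are fixed, Prokhorov on $\Cpl(\nu_0,\nu_1)$ gives a convergent subsequence directly. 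You therefore do not need subnets in Theorem~\ref{t:AES}, which also sidesteps whether the costs $a_n$ stay monotone along a subnet. The Portmanteau argument for the bounded-below continuous $d_m^2$, followed by $\sup_m$, suffices.

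\textbf{What the paper's route buys.} Its identity \eqref{e:W-ident-MI} holds for \emph{any} pair of measures on $X_n$ that are absolutely continuous with respect to $\mu_{X_n}$, whether or not they are projections of prescribed measures on $X$. This is reused later:
\begin{itemize}
\item in the proof of Theorem~\ref{t:PCD}, for the intermediate measures $\nu_{n,q},\nu_{n,r}$ of the finite-dimensional geodesics;
\item in Lemma~\ref{l:Hn}, for $H^n_t\sigma_n$.
\end{itemize}
In those cases there is no given measure on $X$ to disintegrate. The only natural lift is through the fibre measures of $\mu_X$, which is exactly the paper's construction and your stated fallback.
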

\begin{proof}
The entropy contraction under push-forward  is standard (see, e.g., \cite[(7.4)]{AGS;C}). We prove the equality for the
Wasserstein distances.

\medskip
\noindent\textit{Disintegration and a Borel lifting of couplings.}
Since $(X,\tau_X)$ and $(X_n,\tau_{X_n})$ are Polish, $\pi_n:X\to X_n$ is Borel and
$(\pi_n)_*\mu_X=\mu_{X_n}$, there exists a disintegration kernel
\[
K_n:X_n\to \mathcal{P}(X),\qquad x_n\mapsto K_n(x_n),
\]
such that for every bounded Borel function $\psi:X\to\mathbb{R}$,
\begin{equation}\label{e:disint-mu-MI}
\int_X \psi(x)\,d\mu_X(x)
=\int_{X_n}\left(\int_X \psi(x)\,dK_n(x_n)(x)\right)d\mu_{X_n}(x_n).
\end{equation}
Moreover, $K_n(x_n)$ is concentrated on the fiber $\pi_n^{-1}(\{x_n\})$ for $\mu_{X_n}$-a.e.\ $x_n$. 

Fix a countable basis $(U_m)_{m\in\mathbb{N}}$ of $(X_n,\tau_{X_n})$ and define the countable $\pi$-system
\[
\mathcal{A}_n:=\Bigl\{\bigcap_{j=1}^r U_{m_j}:\ r\in\mathbb{N},\ m_1,\dots,m_r\in\mathbb{N}\Bigr\},
\qquad \sigma(\mathcal{A}_n)=\mathcal{B}(X_n).
\]
Define
\[
G_n:=\bigcap_{A\in\mathcal{A}_n}\Bigl\{x_n\in X_n:\ K_n(x_n)\bigl(\pi_n^{-1}(A)\bigr)=\mathbf{1}_{A}(x_n)\Bigr\}.
\]
For $A\in\mathcal{A}_n$, set $G_{n,A}:=\{x_n:\ K_n(x_n)(\pi_n^{-1}(A))=\mathbf{1}_A(x_n)\}$, thus $G_n= \cap_{A \in \mathcal {A}_n} G_{n,A}$.
If $K_n(x_n)$ is concentrated on $\pi_n^{-1}(\{x_n\})$, then
$K_n(x_n)(\pi_n^{-1}(A))=\mathbf{1}_A(x_n)$ for every Borel $A\subset X_n$.
Since $K_n(x_n)$ is concentrated on $\pi_n^{-1}(\{x_n\})$ for $\mu_{X_n}$-a.e.\ $x_n$, it follows that
$\mu_{X_n}(G_{n,A})=1$ for every $A\in\mathcal{A}_n$. As $\mathcal{A}_n$ is countable,
\[
\mu_{X_n}(G_n)=\mu_{X_n}\Bigl(\bigcap_{A\in\mathcal{A}_n}G_{n,A}\Bigr)=1.
\]
As a consequence, for every $x_n\in G_n$, 
\begin{equation}\label{e:fiber-MI}
(\pi_n)_*K_n(x_n)=\delta_{x_n},
\end{equation}
because $(\pi_n)_*K_n(x_n)$ and $\delta_{x_n}$ agree on the $\pi$-system $\mathcal{A}_n$, which is thus extended to $\mathcal{B}(X_n)$.

\smallskip
In the later argument, we record the following identity: for every bounded Borel $\psi:X\to\mathbb{R}$ and every
nonnegative Borel $g\in L^1(\mu_{X_n})$,
\begin{equation}\label{e:pullout-MI}
\int_X \psi(x)\,g(\pi_n(x))\,d\mu_X(x)
=\int_{X_n} g(x_n)\left(\int_X \psi(x)\,dK_n(x_n)(x)\right)d\mu_{X_n}(x_n).
\end{equation}
Indeed, apply \eqref{e:disint-mu-MI} to the bounded Borel functions
$\psi\cdot (g\wedge M)\circ\pi_n$ and let $M\to\infty$. Then, the dominated convergence can apply since
$|\psi(x)(g\wedge M)(\pi_n(x))|\le \|\psi\|_\infty\, g(\pi_n(x))$ and
$g\circ\pi_n\in L^1(\mu_X)$.

\medskip
\noindent\textit{Step 0.}
Since $\nu_i\ll \mu_X$ and  $\nu_{n,i}\ll \mu_{X_n}$, we have~$\nu_{n,i}=\rho_{n,i}\,\mu_{X_n}$ for some
$\rho_{n,i}\in L^1(\mu_{X_n})$. Choose Borel representatives of $\rho_{n,i}$ (without relabelling them) and 
define
\[
\nu_i^{(n)} := (\rho_{n,i}\circ\pi_n)\,\mu_X\in\mathcal{P}(X),\qquad i=0,1.
\]
By Lemma~\ref{l;weakconv}, $\nu_i^{(n)}\Rightarrow \nu_i$ weakly in $\mathcal{P}(X)$ for $i=0,1$.
For later use, we  define 
\[
d_n(x,y):=d_{X_n}(\pi_n(x),\pi_n(y)),\qquad (x,y\in X).
\]
Then $d_n$ is $(\tau_X\times\tau_X)$-continuous, $d_n\le d_{n+1}$, and $\sup_{n}d_n=d_X$.

\medskip
\noindent\textit{Step 1: lifting couplings and identifying costs.}
Let $\gamma_n\in \Cpl(\nu_{n,0},\nu_{n,1})$. Define a {\it lifted} Borel probability measure
$\Lambda_n(\gamma_n)\in\mathcal{P}(X\times X)$ by duality: for every bounded Borel
$\varphi:X\times X\to\mathbb{R}$,
\begin{equation}\label{e:lift-MI}
\int_{X\times X}\varphi(x,y)\,d\Lambda_n(\gamma_n)(x,y)
:=\int_{X_n\times X_n}\left(\int_{X\times X}\varphi(x,y)\,d(K_n(x_n)\otimes K_n(y_n))(x,y)\right)
d\gamma_n(x_n,y_n).
\end{equation}
Since $\mu_{X_n}(X_n\setminus G_n)=0$ and the marginals of $\gamma_n$ are $\nu_{n,0},\nu_{n,1}\ll\mu_{X_n}$,
we have $\gamma_n((X_n\setminus G_n)\times X_n)=\gamma_n(X_n\times (X_n\setminus G_n))=0$, hence $\gamma_n$
is supported on $G_n\times G_n$.

\smallskip
The lifting map~$\Lambda_n$ has the following properties:
\\
\noindent\emph{(i) Projection identity.}
For rectangles $A\times B$ with $A,B\in\mathcal{A}_n$, using \eqref{e:fiber-MI} and that $\gamma_n$ is supported on
$G_n\times G_n$,
\[
\Lambda_n(\gamma_n)\bigl(\pi_n^{-1}(A)\times \pi_n^{-1}(B)\bigr)
=\int_{X_n\times X_n}\mathbf{1}_A(x_n)\mathbf{1}_B(y_n)\,d\gamma_n(x_n,y_n)
=\gamma_n(A\times B).
\]
Since $\{A\times B:\ A,B\in\mathcal{A}_n\}$ is a $\pi$-system generating $\mathcal{B}(X_n\times X_n)$, we have
\begin{equation}\label{e:proj-MI}
(\pi_n^{\times 2})_*\Lambda_n(\gamma_n)=\gamma_n.
\end{equation}

\smallskip
\noindent\emph{(ii) Marginals.}
Let $E\in\mathcal{B}(X)$. Since the first marginal of $\gamma_n$ is $\nu_{n,0}$,
\[
\Lambda_n(\gamma_n)(E\times X)
=\int_{X_n} K_n(x_n)(E)\,d\nu_{n,0}(x_n)
=\int_{X_n}\rho_{n,0}(x_n)K_n(x_n)(E)\,d\mu_{X_n}(x_n).
\]
Applying \eqref{e:pullout-MI} with $\psi=\mathbf{1}_E$ and $g=\rho_{n,0}$ gives
\[
\Lambda_n(\gamma_n)(E\times X)=\int_E \rho_{n,0}(\pi_n(x))\,d\mu_X(x)=\nu_0^{(n)}(E).
\]
Similarly, the second marginal is $\nu_1^{(n)}$. Hence $\Lambda_n(\gamma_n)\in\Cpl(\nu_0^{(n)},\nu_1^{(n)})$.

\smallskip
\noindent\emph{(iii) Cost identity.}
By the definition of $d_n$, we have by \eqref{e:proj-MI} that 
\[
\int_{X\times X} d_n(x,y)^2\,d\Lambda_n(\gamma_n)(x,y)
=\int_{X_n\times X_n} d_{X_n}(x_n,y_n)^2\,d\gamma_n(x_n,y_n).
\]

Define
\[
W_{2,d_n}(\nu_0^{(n)},\nu_1^{(n)})^2 := \inf_{\Gamma\in\Pi(\nu_0^{(n)},\nu_1^{(n)})}\int_{X\times X} d_n^2\,d\Gamma,
\quad
W_{2,d_{X_n}}(\nu_{n,0},\nu_{n,1})^2 := \inf_{\tilde\Gamma\in\Pi(\nu_{n,0},\nu_{n,1})}\int_{X_n\times X_n} d_{X_n}^2\,d\tilde\Gamma.
\]
Then the preceding identities yield
\begin{equation}\label{e:W-ident-MI}
W_{2,d_{X_n}}(\nu_{n,0},\nu_{n,1}) = W_{2,d_n}(\nu_0^{(n)},\nu_1^{(n)}).
\end{equation}
Indeed, the above lifting arguments give $W_{2,d_n}\le W_{2,d_{X_n}}$:
Let $\tilde\Gamma\in\Cpl(\nu_{n,0},\nu_{n,1})$ be arbitrary and let $\Lambda_n(\tilde\Gamma)$ be its lift defined by
\eqref{e:lift-MI}. By Step~1(ii) we have $\Lambda_n(\tilde\Gamma)\in\Cpl(\nu_0^{(n)},\nu_1^{(n)})$, and by Step~1(iii),
\[
\int_{X\times X} d_n(x,y)^2\,d\Lambda_n(\tilde\Gamma)(x,y)
=\int_{X_n\times X_n} d_{X_n}(x_n,y_n)^2\,d\tilde\Gamma(x_n,y_n).
\]
Therefore, taking the infimum over $\Gamma\in\Cpl(\nu_0^{(n)},\nu_1^{(n)})$, we obtain
\[
W_{2,d_n}(\nu_0^{(n)},\nu_1^{(n)})^2
\le \int_{X\times X} d_n^2\,d\Lambda_n(\tilde\Gamma)
=\int_{X_n\times X_n} d_{X_n}^2\,d\tilde\Gamma.
\]
Finally, taking the infimum over $\tilde\Gamma\in\Cpl(\nu_{n,0},\nu_{n,1})$, we obtain
\[
W_{2,d_n}(\nu_0^{(n)},\nu_1^{(n)})^2 \le W_{2,d_{X_n}}(\nu_{n,0},\nu_{n,1})^2,
\]
hence $W_{2,d_n}(\nu_0^{(n)},\nu_1^{(n)}) \le W_{2,d_{X_n}}(\nu_{n,0},\nu_{n,1})$.

Conversely, if $\Gamma\in\Cpl(\nu_0^{(n)},\nu_1^{(n)})$, then
$\tilde\Gamma:=(\pi_n^{\times 2})_*\Gamma\in\Cpl(\nu_{n,0},\nu_{n,1})$ and
$\int d_{X_n}^2\,d\tilde\Gamma=\int d_n^2\,d\Gamma$, hence $W_{2,d_{X_n}}\le W_{2,d_n}$.

\medskip
\noindent\textit{Step 2: the $\liminf$ inequality.}
Since $W_{2,d_X}(\nu_0,\nu_1)<\infty$, we can choose $\bar\Gamma\in\Cpl(\nu_0,\nu_1)$ with
\[
\int_{X\times X} d_X(x,y)^2\,d\bar\Gamma(x,y)<\infty.
\]
Define $\bar\gamma_n:=(\pi_n^{\times 2})_*\bar\Gamma\in\Cpl(\nu_{n,0},\nu_{n,1})$ and set
\[
\widetilde\Gamma_n:=\Lambda_n(\bar\gamma_n)\in\mathcal P(X\times X),
\]
where $\Lambda_n$ is the lifting map constructed in Step~1. By Step~1(ii), $\widetilde\Gamma_n$ is a coupling of
$\nu_0^{(n)}$ and $\nu_1^{(n)}$.
Moreover, by Step~1(iii),
\[
\int_{X\times X} d_n(x,y)^2\,d\widetilde\Gamma_n(x,y)
=\int_{X_n\times X_n} d_{X_n}(x_n,y_n)^2\,d\bar\gamma_n(x_n,y_n)
=\int_{X\times X} d_n(x,y)^2\,d\bar\Gamma(x,y).
\]
Since $d_n\le d_X$, we have $\int d_n^2\,d\bar\Gamma\le \int d_X^2\,d\bar\Gamma<\infty$, hence
\[
W_{2,d_n}(\nu_0^{(n)},\nu_1^{(n)})^2
\le \int_{X\times X} d_n^2\,d\widetilde\Gamma_n
<\infty.
\]
%

Take an
optimal coupling $\Gamma_n\in\Cpl(\nu_0^{(n)},\nu_1^{(n)})$, viz., 
\[
\int_{X\times X} d_n^2\,d\Gamma_n = W_{2,d_n}(\nu_0^{(n)},\nu_1^{(n)})^2.
\]
The families $(\nu_0^{(n)})_n$ and $(\nu_1^{(n)})_n$ are tight since $\nu_i^{(n)}\Rightarrow \nu_i$, hence
$(\Gamma_n)_n$ is tight on $X\times X$. By the Prokhorov theorem, up to a subsequence, $\Gamma_n\Rightarrow \Gamma$ weakly for
some $\Gamma\in\mathcal{P}(X\times X)$. Passing to the limit in the marginals, we have $\Gamma\in\Cpl(\nu_0,\nu_1)$.

We claim
\[
\liminf_{n\to\infty}\int d_n^2\,d\Gamma_n \ge \int d_X^2\,d\Gamma.
\]
Fix $m\in\mathbb{N}$. Since $d_n^2\ge d_m^2$ for all $n\ge m$,
\[
\liminf_{n\to\infty}\int d_n^2\,d\Gamma_n \ge \liminf_{n\to\infty}\int d_m^2\,d\Gamma_n.
\]
As $d_m^2$ is continuous on $X\times X$, the Portmanteau theorem yields
\[
\liminf_{n\to\infty}\int d_m^2\,d\Gamma_n \ge \int d_m^2\,d\Gamma.
\]
Hence $\liminf_n\int d_n^2\,d\Gamma_n \ge \int d_m^2\,d\Gamma$ for every $m$. Taking $\sup_m$ gives
\[
\liminf_{n\to\infty}\int d_n^2\,d\Gamma_n \ge \sup_{m}\int d_m^2\,d\Gamma
= \int \sup_m d_m^2\,d\Gamma = \int d_X^2\,d\Gamma,
\]
where the last two equalities use monotone convergence and $\sup_m d_m^2=d_X^2$.
Therefore,
\[
\liminf_{n\to\infty} W_{2,d_n}(\nu_0^{(n)},\nu_1^{(n)})^2
=\liminf_{n\to\infty}\int d_n^2\,d\Gamma_n
\ge \int d_X^2\,d\Gamma
\ge W_{2,d_X}(\nu_0,\nu_1)^2,
\]
hence
\[
\liminf_{n\to\infty} W_{2,d_n}(\nu_0^{(n)},\nu_1^{(n)}) \ge W_{2,d_X}(\nu_0,\nu_1).
\]
Using \eqref{e:W-ident-MI}, we obtain
\begin{align} \label{e: WWE}
\liminf_{n\to\infty} W_{2,d_{X_n}}(\nu_{n,0},\nu_{n,1}) = \liminf_{n\to\infty} W_{2,d_n}(\nu_0^{(n)},\nu_1^{(n)})  \ge W_{2,d_X}(\nu_0,\nu_1).
\end{align}

\medskip
\noindent\textit{Step 3: the $\limsup$ inequality.}
Fix $\varepsilon>0$. Choose $\bar\Gamma\in\Cpl(\nu_0,\nu_1)$ such that
\[
\int_{X\times X} d_X^2\,d\bar\Gamma \le W_{2,d_X}(\nu_0,\nu_1)^2+\varepsilon.
\]
Let $\bar\Gamma_n:=(\pi_n^{\times 2})_*\bar\Gamma\in\Cpl(\nu_{n,0},\nu_{n,1})$. Then
\[
W_{2,d_{X_n}}(\nu_{n,0},\nu_{n,1})^2
\le \int_{X_n\times X_n} d_{X_n}^2\,d\bar\Gamma_n
= \int_{X\times X} d_n^2\,d\bar\Gamma
\le \int_{X\times X} d_X^2\,d\bar\Gamma
\le W_{2,d_X}(\nu_0,\nu_1)^2+\varepsilon.
\]
Taking $\limsup_{n\to\infty}$ and then $\varepsilon\downarrow 0$, we obtain
\[
\limsup_{n\to\infty} W_{2,d_{X_n}}(\nu_{n,0},\nu_{n,1}) \le W_{2,d_X}(\nu_0,\nu_1).
\]

Combining the $\liminf$ and $\limsup$ inequalities, we conclude
\[
\lim_{n\to\infty} W_{2,d_{X_n}}(\nu_{n,0},\nu_{n,1}) = W_{2,d_X}(\nu_0,\nu_1). \qedhere
\]
\end{proof}

\begin{proof}[Proof of Theorem \ref{t:PCD}]
Let \(\nu_0,\nu_1\in D(\operatorname{Ent}_{\mu_X})\) satisfy
\(
W_{2,d_X}(\nu_0,\nu_1)<\infty .
\)
For \(i=0,1\), set
\(
\nu_{n,i}:=(\pi_n)_*\nu_i .
\)
By Lemma~\ref{l:MI}, we have
\[
\operatorname{Ent}_{\mu_{X_n}}(\nu_{n,i})
\le
\operatorname{Ent}_{\mu_X}(\nu_i)
\]
and
\[
W_{2,d_{X_n}}(\nu_{n,0},\nu_{n,1})
\longrightarrow
W_{2,d_X}(\nu_0,\nu_1).
\]
Put
\[
L:=W_{2,d_X}(\nu_0,\nu_1).
\]

Since \(X_n\) satisfies \(\CD(K,\infty)\), for each \(n\in\mathbb N\) there exists a
constant-speed \(W_{2,d_{X_n}}\)-geodesic
\(
(\nu_{n,t})_{t\in[0,1]}\subset D(\operatorname{Ent}_{\mu_{X_n}})
\)
joining \(\nu_{n,0}\) and \(\nu_{n,1}\), along which
\[
\operatorname{Ent}_{\mu_{X_n}}(\nu_{n,t})
\le
(1-t)\operatorname{Ent}_{\mu_{X_n}}(\nu_{n,0})
+t\operatorname{Ent}_{\mu_{X_n}}(\nu_{n,1})
-\frac K2 t(1-t)
W_{2,d_{X_n}}(\nu_{n,0},\nu_{n,1})^2
\]
for every \(t\in[0,1]\). Write
\[
\nu_{n,t}=\rho_{n,t}\mu_{X_n}
\]
and define the lifted probability measure on \(X\) by
\(
\nu_t^{(n)}:=(\rho_{n,t}\circ\pi_n)\mu_X .
\)
Then
\[
(\pi_n)_*\nu_t^{(n)}=\nu_{n,t}
\]
and
\(
\operatorname{Ent}_{\mu_X}(\nu_t^{(n)})
=
\operatorname{Ent}_{\mu_{X_n}}(\nu_{n,t}).
\)

Let
\(
Q:=\mathbb Q\cap[0,1].
\)
For every \(q\in Q\), the above entropy estimate gives
\[
\sup_n \operatorname{Ent}_{\mu_X}(\nu_q^{(n)})<\infty .
\]
Indeed,
\[
\begin{aligned}
\operatorname{Ent}_{\mu_X}(\nu_q^{(n)})
&=
\operatorname{Ent}_{\mu_{X_n}}(\nu_{n,q})  \\
&\le
(1-q)\operatorname{Ent}_{\mu_X}(\nu_0)
+q\operatorname{Ent}_{\mu_X}(\nu_1)
-\frac K2 q(1-q)W_{2,d_{X_n}}(\nu_{n,0},\nu_{n,1})^2,
\end{aligned}
\]
and \(W_{2,d_{X_n}}(\nu_{n,0},\nu_{n,1})\to L\). Hence \(\{\nu_q^{(n)}\}_n\) is tight for each
\(q\in Q\). Since \(Q\) is countable, by the standard diagonal argument, we can extract  a single (non-relabelled) subsequence and measures \(\nu_q\in \mathcal P(X)\), \(q\in Q\), such that
\[
\nu_q^{(n)}\rightharpoonup \nu_q
\qquad\text{for every }q\in Q.
\]
For \(q=0,1\), Lemma~\ref{l;weakconv} gives the endpoint measures \(\nu_0\) and
\(\nu_1\).

We claim that \((\nu_q)_{q\in Q}\) is a constant-speed geodesic on rational times. Let
\(q,r\in Q\). By the lower semicontinuity argument in Lemma~\ref{l:MI}, and by the lifted
Wasserstein identity \eqref{e:W-ident-MI} applied to the pair \(\nu_{n,q},\nu_{n,r}\), we get
\[
\begin{aligned}
W_{2,d_X}(\nu_q,\nu_r)
&\le
\liminf_{n\to\infty} W_{2,d_n}(\nu_q^{(n)},\nu_r^{(n)})  \\
&=
\liminf_{n\to\infty} W_{2,d_{X_n}}(\nu_{n,q},\nu_{n,r})  \\
&=
|q-r|\lim_{n\to\infty}W_{2,d_{X_n}}(\nu_{n,0},\nu_{n,1})  \\
&=
|q-r|L .
\end{aligned}
\]
For \(0\le q\le r\le1\), applying this estimate to the pairs \((0,q)\), \((q,r)\), and
\((r,1)\), and using the triangle inequality, we obtain
\[
\begin{aligned}
L
&=
W_{2,d_X}(\nu_0,\nu_1) \\
&\le
W_{2,d_X}(\nu_0,\nu_q)
+
W_{2,d_X}(\nu_q,\nu_r)
+
W_{2,d_X}(\nu_r,\nu_1) \\
&\le
qL+(r-q)L+(1-r)L
=
L.
\end{aligned}
\]
Thus the equality holds. Therefore
\[
W_{2,d_X}(\nu_q,\nu_r)=|q-r|L
\qquad
\forall q,r\in Q.
\]

We next pass the entropy inequality to rational times. By the lower semicontinuity of the entropy,
\begin{align} \label{e:EBR}
\operatorname{Ent}_{\mu_X}(\nu_q) \notag
&\le
\liminf_{n\to\infty}
\operatorname{Ent}_{\mu_X}(\nu_q^{(n)})  \\ \notag
&=
\liminf_{n\to\infty}
\operatorname{Ent}_{\mu_{X_n}}(\nu_{n,q})  \\ \notag
&\le
\limsup_{n\to\infty}
\left[
(1-q)\operatorname{Ent}_{\mu_{X_n}}(\nu_{n,0})
+q\operatorname{Ent}_{\mu_{X_n}}(\nu_{n,1})
-\frac K2 q(1-q)
W_{2,d_{X_n}}(\nu_{n,0},\nu_{n,1})^2
\right]  \\
&\le
(1-q)\operatorname{Ent}_{\mu_X}(\nu_0)
+q\operatorname{Ent}_{\mu_X}(\nu_1)
-\frac K2 q(1-q)L^2 .
\end{align}

It remains to define \(\nu_t\) for arbitrary \(t\in[0,1]\). Let \(t\in[0,1]\), and let
\((q_j)_j\subset Q\) be any sequence with \(q_j\to t\). Then, we have 
\[
W_{2,d_X}(\nu_{q_j},\nu_{q_k})=|q_j-q_k|L.
\]
In particular, \((\nu_{q_j})_j\) is \(W_{2,d_X}\)-Cauchy. Since the entropy bounds above are
uniform for \(q_j\in[0,1]\), the sequence is tight. Choose a weak limit point, denoted  by
\(\nu_t\). This definition is independent of the approximating rational sequence and of the chosen
weakly convergent subsequence: if \(\eta\) and \(\eta'\) are two such limits, then the weak joint-lower
semicontinuity implies
\[
W_{2,d_X}(\eta,\eta')=0,
\]
hence \(\eta=\eta'\).
Moreover, for \(q\in Q\),  by choosing rational \(q_j\to t\), and using the lower semicontinuity, we have 
\[
W_{2,d_X}(\nu_q,\nu_t)
\le
\liminf_{j\to\infty} W_{2,d_X}(\nu_q,\nu_{q_j})
=
|q-t|L.
\]
Using the triangle inequality with the endpoints as above, we obtain the equality. Hence
\[
W_{2,d_X}(\nu_s,\nu_t)=|s-t|L
\qquad
\forall s,t\in[0,1].
\]
Thus \((\nu_t)_{t\in[0,1]}\) is a constant-speed \(W_{2,d_X}\)-geodesic connecting
\(\nu_0\) and \(\nu_1\).

Finally, let \(t\in[0,1]\) and choose \(q_j\in Q\) with \(q_j\to t\). Since
\(\nu_{q_j}\rightharpoonup\nu_t\), the lower semicontinuity of the entropy and \eqref{e:EBR} conclude
\[
\operatorname{Ent}_{\mu_X}(\nu_t)
\le
(1-t)\operatorname{Ent}_{\mu_X}(\nu_0)
+t\operatorname{Ent}_{\mu_X}(\nu_1)
-\frac K2 t(1-t)W_{2,d_X}(\nu_0,\nu_1)^2 .
\]
Therefore \(X\) satisfies \(\CD(K,\infty)\).
\end{proof}

\section{Mosco Convergence} \label{sec:Mosco}
In this section, we prove the Mosco convergence of Cheeger energies in inverse systems. 
Let ${\rm Lip}(d_X)$ denote the space of Lipschitz functions $u: X \to \R$ and ${\rm Lip}_b(d_X)$ denote the subspace consisting of bounded functions. Since $d_X$ is merely an extended distance,  elements both in ${\rm Lip}(d_X)$ and  ${\rm Lip}_b(d_X)$ are not necessarily measurable in terms of the Borel $\sigma$-algebra $\mathcal B(X, \tau_X)$ with respect to $\tau_X$. We, therefore, introduce a further notation: for a $\sigma$-algebra $\Sigma$ on $X$, define
\begin{align*}
{\rm Lip}(d_X, \Sigma)&:=\{u \in {\rm Lip}(d_X): u\ \text{is $\Sigma$-measurable} \}, 
\\
{\rm Lip}_b(d_X, \Sigma)&:=\{u \in {\rm Lip}_b(d_X): u\ \text{is $\Sigma$-measurable}\} \fstop
\end{align*}
We recall the definition of the Cheeger energy on a Polish extended metric measure space based on the $L^2$-relaxation of the pointwise slope. 
\begin{defn}[Cheeger energy: {\cite[Theorem 4.5]{AGS;C}}] \label{d:Ch}
Let $X=(X, \tau_X, d_X, \mu_X)$ be a Polish extended mm-space. The Cheeger energy $\Ch_{d_X, \mu_X}: L^2(X, \mu_X) \to [0, \infty]$ is defined as 
\begin{align*}
 \Ch_{d_X, \mu_X}(u)
:=\ \inf\biggl\{ \liminf_{n \to \infty} \int_X |\SL{d_X}{u_n}|^2 \diff \mu_X : u_n \xrightarrow{L^2(\mu_X)} u, \ u_n \in {\rm Lip}_b(d_X, \mathcal B(X))\cap L^2(\mu_X) \biggr\} \comma
\end{align*}
where {\it the $d_X$-slope $|\SL{d_X}{u}|$} is defined as  
\begin{align} \label{e:SL}
|\SL{d_X}{u}|(x):=
\begin{cases} \displaystyle
\limsup_{y \to x}\frac{|u(y)-u(x)|}{d_X(y, x)} \quad &\text{when $x$ is not isolated};
\\
0 \quad &\text{otherwise.}
\end{cases}
\end{align}  
We define $(1,2)$-Sobolev space by
\[W^{1,2}(X)=W^{1,2}(X, d_X, \mu_X):=\{u \in L^2(\mu_X): \Ch_{d_X, \mu_X}(u)<\infty\}.\]
We define $\Ch_{d_X, \mu_X}(u)=+\infty$ for $u \in L^2(X, \mu_X) \setminus W^{1,2}(X, d_X, \mu_X)$.
If the following parallelogram law holds
$$\Ch_{d_X, \mu_X}(u+v) + \Ch_{d_X, \mu_X}(u-v) = 2\Ch_{d_X, \mu_X}(u) + 2\Ch_{d_X, \mu_X}(v) \quad u, v \in W^{1,2}(X) $$
we say that $X$ is {\it infinitesimally Hilbertian}, which was introduced in \cite{Gi}. 
\end{defn}

\begin{defn}[$L^2$-gradient flow {\cite[Section 4.2]{AGS;C}}]
We denote by $(H_t^X)_{t \ge 0}$ the $L^2$-gradient flow of $\Ch_{d_X, \mu_X}$, namely,  for all $f_0 \in L^2(X, \mu_X)$, the map $t \mapsto f_t:=H_t^X f_0$ from $(0, \infty)$ to $L^2(X, \mu_X)$ is the unique locally Lipschitz curve with $f_t \to f_0$ as $t \downarrow 0$ whose derivative satisfies 
$$\frac{d}{dt} f_t \in \partial ^-\Ch_{d_X, \mu_X}(f_t) \quad \text{for a.e.~$t \in (0,\infty)$} .$$
Here, for $f \in W^{1,2}(X, d_X, \mu_X)$, we define $$l \in \partial ^-\Ch_{d_X, \mu_X}(f)  \overset{def}\iff \int_X l(g-f) d \mu_X \le \Ch_{d_X, \mu_X}(g)- \Ch_{d_X, \mu_X}(f)  \qquad \text{for every $g \in L^2(X, \mu_X)$}.$$
\end{defn}
  Let $\seqn{X_n}$ be a sequence of mm-spaces with $X_n \prec X_{n+1}$ for all $n$ and $X:=\varprojlim X_n$ its inverse limit mm-space. Let $d_n:X\times X\to[0,\infty)$ be the semidistance
\[
d_n(x,y):=d_{X_n}(\pi_n(x),\pi_n(y)).
\]
Let $X_n^{\mathrm{q}}:=X/{\sim}$ be the quotient set induced by $x\sim y\iff d_n(x,y)=0$, and let
$\mathrm{p}_n:X\to X_n^{\mathrm{q}}$ be the canonical projection. Define the quotient distance
$\tilde d_n([x],[y]):=d_n(x,y)$ and the push-forward measure $m_n:=(\mathrm{p}_n)_*\mu_X$.
Denote by $\Ch_{\tilde d_n,m_n}$ the Cheeger energy on 
$(X_n^{\mathrm{q}},\tilde d_n,m_n)$.
\medskip
\begin{defn}
For $f\in L^2(X,\mu_X)$ we define the {\it lifted Cheeger energy} $\Ch_{d_n,\mu_X}(f)$ by
\[
\Ch_{d_n,\mu_X}(f):=
\begin{cases}
\Ch_{\tilde d_n,m_n}(g) & \text{if } f=g\circ \mathrm{p}_n \text{ for some } g\in L^2(X_n^{\mathrm{q}},m_n),\\
+\infty & \text{otherwise,}
\end{cases}
\]
and 
\begin{align*}
W^{1,2}(X,d_n,\mu_X)
:=\{f\circ \mathrm{p}_n:\ f\in W^{1,2}(X_n^\mathrm q ,\tilde d_{n},m_{n})\}.
\end{align*}
Let $(\tilde H^n_t)_{t \ge 0}$ and $(H^{(n)}_t)_{t \ge 0}$ be the $L^2$-gradient flow of $\Ch_{\tilde d_n,m_n}$ and $\Ch_{d_n, \mu_X}$ respectively. Then,  
\[
H^{(n)}_t f:=(\tilde H^n_t g) \circ  \mathrm{p}_n \quad  \text{for} \quad f=g\circ \mathrm{p}_n \text{ for some } g\in L^2(X_n^{\mathrm{q}},m_n).
\]
\end{defn}
\medskip
\begin{lem}\label{l:M=M}
The following hold:
\begin{align}
\Ch_{d_n,\mu_X}(f\circ\pi_n)&=\Ch_{d_{X_n},\mu_{X_n}}(f), \qquad f\in L^2(X_n,\mu_{X_n})
\\
W^{1,2}(X, d_n,\mu_X)
&=\{f\circ\pi_n:\ f\in W^{1,2}(X_n,d_{X_n},\mu_{X_n})\}  \notag 
\\
 \label{d:CEC} H^{(n)}_tf&= (H^{X_n}_t g) \circ \pi_n \quad  \text{for} \quad f=g\circ \pi_n \text{ for some } g\in L^2(X_n, \mu_{X_n}). \notag
\end{align}
\end{lem}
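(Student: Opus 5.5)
The plan is to identify the quotient space $(X_n^{\mathrm q},\tilde d_n,m_n)$ with $(X_n,d_{X_n},\mu_{X_n})$ up to a measure-preserving isometry. All three identities then follow from the definition of the lifted Cheeger energy and of $H^{(n)}_t$.

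First, since $d_n(x,y)=d_{X_n}(\pi_n(x),\pi_n(y))$ and $d_{X_n}$ is a genuine distance, we have $x\sim y$ if and only if $\pi_n(x)=\pi_n(y)$. Hence the map $\iota_n:X_n^{\mathrm q}\to X_n$, $[x]\mapsto\pi_n(x)$, is well defined and injective. It is also isometric, because
\[
\tilde d_n([x],[y])=d_{X_n}(\iota_n[x],\iota_n[y]).
\]
Moreover $\iota_n\circ \mathrm p_n=\pi_n$, so $(\iota_n)_*m_n=(\pi_n)_*\mu_X=\mu_{X_n}$. The point to check is surjectivity, or at least that the image has full measure. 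The image is $\pi_n(X)$, which has full $\mu_{X_n}$-measure since $\mu_{X_n}=(\pi_n)_*\mu_X$. If a literal surjectivity statement is wanted, I would try to lift any $x_n\in X_n$ through the maps $f_m$; but that needs $f_m$ to be surjective, which is not assumed. I therefore only use that $\pi_n(X)$ is a full-measure set, closed under nothing in particular, and that it is dense in $\supp\mu_{X_n}=X_n$.

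Second, I transfer the Cheeger energy. Since the Cheeger energy is defined by relaxation of slopes of Lipschitz functions, it is invariant under a measure-preserving isometry onto a dense full-measure subset, because Lipschitz functions extend uniquely to the completion. More simply, the paper's invariance result under isomorphism in the sense of Definition~\ref{d:mmiso} (Theorem~\ref{thm:energy}) gives this directly: $\iota_n$ realises $X_n^{\mathrm q}\cong X_n$. Concretely, $g\mapsto g\circ\iota_n$ is a unitary map $L^2(X_n,\mu_{X_n})\to L^2(X_n^{\mathrm q},m_n)$, and
\[
\Ch_{\tilde d_n,m_n}(f\circ\iota_n)=\Ch_{d_{X_n},\mu_{X_n}}(f).
\]
Since $f\circ\pi_n=(f\circ\iota_n)\circ\mathrm p_n$, the definition of $\Ch_{d_n,\mu_X}$ yields the first identity. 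The second identity, $W^{1,2}(X,d_n,\mu_X)=\{f\circ\pi_n: f\in W^{1,2}(X_n)\}$, follows by the same correspondence, using that every $g\in L^2(m_n)$ has the form $f\circ\iota_n$ for some $f\in L^2(\mu_{X_n})$. For the third identity, both $H^{(n)}$ and $H^{X_n}$ are gradient flows of energies intertwined by the unitary map $g\mapsto g\circ\iota_n$. Subdifferentials are preserved under this map, so by uniqueness of gradient flows $\tilde H^n_t(g\circ\iota_n)=(H^{X_n}_tg)\circ\iota_n$. Composing with $\mathrm p_n$ gives the claim.

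I expect the main obstacle to be the invariance of the Cheeger energy under $\iota_n$, because the slope is a local quantity and $\iota_n$ might not be onto $X_n$. If invoking Theorem~\ref{thm:energy} is not allowed at this point, I would argue directly. Given $u\in\Lip_b(d_{X_n})$, the function $u\circ\iota_n$ has slope at most $|\SL{d_{X_n}}{u}|\circ\iota_n$, which gives one inequality. For the converse, a Lipschitz function on $X_n^{\mathrm q}$ extends by McShane to $X_n$ with the same Lipschitz constant. Controlling the slopes, rather than just the Lipschitz constants, needs the fact that the Cheeger energy can equivalently be computed with asymptotic Lipschitz constants; these are stable under extension to the closure of a dense set.
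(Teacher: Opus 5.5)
Your overall plan is the paper's. The map $\iota_n$ (the paper's $J_n$) is an isometric embedding with $\iota_n\circ\mathrm p_n=\pi_n$, and its image $\pi_n(X)$ is dense and of full measure. The Cheeger energy is transported through it, and the heat-flow identity follows from the unitary intertwining and uniqueness of gradient flows.

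\textbf{The gap.} The step you call ``more simply'' does not work. Theorem~\ref{thm:energy} is proved only for Polish extended mm-spaces, whose distance must be complete. But $(X_n^{\mathrm q},\tilde d_n)\cong(\pi_n(X),d_{X_n})$ is complete only if $\pi_n(X)$ is closed. Since $\pi_n(X)$ is dense, that happens only when $\pi_n(X)=X_n$, where there is nothing to prove.

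Incompleteness really occurs. Take $X_1=[0,1]$ with Lebesgue measure, $X_k=[0,\infty)$ for $k\ge 2$, $f_1(t)=1/(1+t)$, $f_k=\mathrm{id}$ for $k\ge2$, and $\mu_{X_k}$ the image of Lebesgue measure on $(0,1]$ under $f_1^{-1}$. Then $\pi_1(X)=(0,1]$. A similar construction with $X_k=(-\infty,0]\cup[1,\infty)$ gives $\pi_1(X)=[0,1]\setminus\{1/2\}$.

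Completeness is used essentially in the proof of Theorem~\ref{thm:energy}. It is needed for the surjectivity $\bar f(\bar X_0)=\bar Y_0$ in Lemma~\ref{lem:extend}. It is also needed for the identification \eqref{e:CHE} of the relaxed energy with the test-plan energy, which comes from \cite{AGS;C}. On $[0,1]\setminus\{1/2\}$ no absolutely continuous curve crosses the hole. So $\mathbf 1_{(1/2,1]}$ has zero weak upper gradient there, although its relaxed Cheeger energy is $+\infty$.

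\textbf{The fix.} The paper handles this by citing invariance of the Cheeger energy under restriction to full-measure subsets and under completion \cite[Section~3.1.2]{Sa}. Your fallback is the right way to prove this by hand, and it is easier than you fear: no asymptotic Lipschitz constants are needed, because slopes themselves are preserved.

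Let $D\subset X_n$ be dense, $\bar u\in\Lip_b(d_{X_n})$ and $x\in D$. Given a sequence $y_j\to x$ in $X_n$ with $y_j\neq x$, choose $y_j'\in D$ with $d_{X_n}(y_j,y_j')\le d_{X_n}(x,y_j)/j$. Then $y_j'\neq x$, $y_j'\to x$, and the difference quotients of $\bar u$ at $y_j$ and at $y_j'$ differ by $O(\Lip(\bar u)/j)$. Hence the slope of $\bar u|_D$ at $x$, computed inside $D$, equals $|\SL{d_{X_n}}{\bar u}|(x)$. Isolated points also correspond, by density.

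Restriction and unique Lipschitz extension give a bijection between bounded Lipschitz functions on $D$ and on $X_n$. Since $D=\pi_n(X)$ has full measure, the two relaxations of Definition~\ref{d:Ch} on $X_n^{\mathrm q}$ and on $X_n$ then coincide under $g\mapsto g\circ\iota_n$. With this argument in place of Theorem~\ref{thm:energy}, your proof is complete, and it is a self-contained version of the result the paper cites.
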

\begin{proof}
Since
\(
d_n(x,y)=0 \Longleftrightarrow \pi_n(x)=\pi_n(y),
\)
the map
\[
J_n:X_n^q\to X_n,\qquad J_n([x])=\pi_n(x),
\]
is a well-defined isometric embedding, and
\(
J_n\circ \mathrm p_n=\pi_n.
\)
Its image is \(\pi_n(X)\subset X_n\).
Thus,  $\tilde d_n$ is identical with $d_{X_n}$, and $m_n=(\pi_n)_*\mu_X=\mu_{X_n}$.
Note that $\pi_n(X) \subset X_n$ is dense and a set of full measure with respect to $\mu_{X_n}$. In particular the metric completion of $\pi_n(X)$ is $X_n$.
Thus, the metric completion of \(X_n^q\) is identified with~\(X_n\). By the
invariance of the Cheeger energy with respect to restriction and completion
\cite[Section~3.1.2]{Sa}, the Cheeger energy on \(X_n^q\) is identified with the Cheeger energy on
\((X_n,d_{X_n},\mu_{X_n})\).
Thus, for every $g\in L^2(X_n,\mu_{X_n})$,
\[
\Ch_{d_n,\mu_X}(g\circ\pi_n)=\Ch_{d_{X_n},\mu_{X_n}}(g),
\]
and $\Ch_{d_n,\mu_X}(f)=+\infty$ whenever $f$ is not  in $\mathcal B_n(X):=\pi_n^{-1}\mathcal B(X_n)$-measurable. In particular, this concludes~the other two equalities.
\end{proof}

\begin{defn}[Mosco convergence] \label{d:Mosco}
Let $X$ be a Polish extended mm-space and $\seqn{(\E_n, \F_n)}$ a sequence of $L^2(\mu_X)$-lower semicontinuous convex functionals $\E_n$ with domain $\F_n \subset L^2(\mu_X)$. 
We say that $\E_n$ \emph{Mosco converges} to $\E$ if 
\begin{enumerate}
\item \label{c:M1} for any $u_n \in \mathcal F_n$ converging weakly to $u$ in $L^2(\mu_X)$, 
$$\liminf_{n \to \infty}\E_n(u_n) \ge \E(u)\ ;$$
\item  \label{c:M2} for any $u \in \F$, there exists $u_n \in \F_n$ so that $u_n$ converges strongly to $u$ in $L^2(\mu_X)$ and 
$$\limsup_{n \to \infty}\E_n(u_n) \le \E(u) \fstop$$
\end{enumerate}
\end{defn}

\smallskip
The main theorem of this section is the following:
\begin{thm}[Mosco convergence]\label{t:Mosco}
Let $\seqn{X_n}$ be a sequence of mm-spaces with $X_n \prec X_{n+1}$ for all $n$
and $X:=\varprojlim X_n$ its inverse limit extended mm-space.
Then, $\Ch_{d_n, \mu_X}$ converges to $\Ch_{d_X, \mu_X}$ in the Mosco sense.
\end{thm}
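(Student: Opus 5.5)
The plan is to exploit two structural facts: the distances $d_n(x,y)=d_{X_n}(\pi_n(x),\pi_n(y))$ increase monotonically to $d_X$, and every $d_n$-Lipschitz function of the form $g\circ\pi_n$ is automatically $\tau_X$-continuous and $d_X$-Lipschitz. By Lemma~\ref{l:M=M}, $\Ch_{d_n,\mu_X}$ is finite only on $\mathcal B_n(X)=\pi_n^{-1}\mathcal B(X_n)$-measurable functions $f=g\circ\pi_n$, and there it equals $\Ch_{d_{X_n},\mu_{X_n}}(g)$. This energy is the relaxation of $\int_{X_n}|\SL{d_{X_n}}{g_k}|^2\,d\mu_{X_n}$ over $g_k\in\Lip_b(X_n)$.

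\textbf{Step 1 (comparison $\Ch_{d_X,\mu_X}\le \Ch_{d_n,\mu_X}$).} Let $g_k\in\Lip_b(X_n)$ with $g_k\to g$ in $L^2(\mu_{X_n})$ be an optimal sequence for $\Ch_{d_{X_n},\mu_{X_n}}(g)$. Then $u_k:=g_k\circ\pi_n\in\Lip_b(d_X,\mathcal B(X))$ and $u_k\to g\circ\pi_n$ in $L^2(\mu_X)$, since $(\pi_n)_*\mu_X=\mu_{X_n}$. Moreover $d_X(y,x)\to0$ forces $d_n(y,x)\to0$, and $|u_k(y)-u_k(x)|/d_X(y,x)\le |g_k(\pi_n y)-g_k(\pi_n x)|/d_{X_n}(\pi_n y,\pi_n x)$. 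Hence pointwise $|\SL{d_X}{u_k}|(x)\le|\SL{d_{X_n}}{g_k}|(\pi_n(x))$. The isolated-point case is treated separately and is harmless, because a point that is $d_n$-isolated is also $d_X$-isolated. Integrating with respect to $\mu_X$ and taking the liminf gives $\Ch_{d_X,\mu_X}(f)\le\Ch_{d_n,\mu_X}(f)$ for all $f\in L^2(\mu_X)$. The same argument with $n\le m$ shows that $\Ch_{d_n,\mu_X}$ is nonincreasing in $n$.

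\textbf{Step 2 (liminf inequality).} Let $u_n\rightharpoonup u$ weakly in $L^2(\mu_X)$. The functional $\Ch_{d_X,\mu_X}$ is convex and $L^2$-lower semicontinuous, hence weakly lower semicontinuous. Combining this with Step~1 gives $\liminf_n\Ch_{d_n,\mu_X}(u_n)\ge\liminf_n\Ch_{d_X,\mu_X}(u_n)\ge\Ch_{d_X,\mu_X}(u)$, which is condition~\eqref{c:M1}.

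\textbf{Step 3 (limsup inequality; the main obstacle).} We need an approximation result: the Cheeger energy on $X$ can be computed by relaxing only over cylinder functions $v=g\circ\pi_m$ with $g\in\Lip_b(X_m)$. For this I would invoke the density-in-energy theorem of \cite{AGS;C}. It states that for any unital algebra $\mathcal A\subset\Lip_b(d_X)$ of $\tau_X$-continuous functions that separates points (or generates $\tau_X$, as verified in Proposition~\ref{prop;invlimextsp}), $\Ch_{d_X,\mu_X}(u)=\inf\{\liminf_k\int \mathrm{lip}_{d_X}(v_k)^2\,d\mu_X : v_k\in\mathcal A,\ v_k\to u\}$. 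Here $\mathrm{lip}_{d_X}$ is the asymptotic Lipschitz constant, defined through $\tau_X$-neighbourhoods. The cylinder functions form such an algebra.

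Given such a $v=g\circ\pi_m$ and $n\ge m$, we have $v\in W^{1,2}(X,d_n,\mu_X)$ and $\Ch_{d_n,\mu_X}(v)\le\int \mathrm{lip}_{d_n}(v)^2\,d\mu_X$. The key claim is then $\mathrm{lip}_{d_n}(v)(x)\downarrow\mathrm{lip}_{d_X}(v)(x)$ pointwise as $n\to\infty$. I expect this interchange of the infimum over neighbourhoods with the limit in $n$ to be the hardest point. I would first try to prove it by contradiction. Suppose there are pairs $y_n\neq z_n$ with $y_n,z_n\to x$ in $\tau_X$ and $|v(y_n)-v(z_n)|\ge(\mathrm{lip}_{d_X}v(x)+\ep)\,d_n(y_n,z_n)$. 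Using $d_n\le d_X$, the continuity of each fixed $d_m$, and the monotonicity $d_n\ge d_m$, one then derives a violation of the $d_X$-asymptotic bound. If this direct argument fails, I would fall back on the upper-semicontinuity framework of \cite{AGS;C}, which allows working with $\tau_X$-upper envelopes of the slopes.

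Once the claim is established, the functions $\mathrm{lip}_{d_n}(v)$ are bounded by $\Lip_{d_m}(g)$, so dominated convergence gives $\limsup_n\Ch_{d_n,\mu_X}(v)\le\int\mathrm{lip}_{d_X}(v)^2\,d\mu_X$. Finally, for $u\in W^{1,2}(X)$, choose cylinder functions $v_k\to u$ with $\int\mathrm{lip}_{d_X}(v_k)^2\,d\mu_X\to\Ch_{d_X,\mu_X}(u)$. A diagonal choice $u_n:=v_{k(n)}$, with $k(n)\to\infty$ slowly, yields $u_n\to u$ strongly in $L^2(\mu_X)$ and $\limsup_n\Ch_{d_n,\mu_X}(u_n)\le\Ch_{d_X,\mu_X}(u)$, which is condition~\eqref{c:M2}.
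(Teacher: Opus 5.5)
Your Steps~1 and~2 are correct and agree with the paper's proof of the monotonicity $\Ch_{d_X,\mu_X}\le\Ch_{d_{n+1},\mu_X}\le\Ch_{d_n,\mu_X}$ and of \eqref{c:M1}. Your pullback of optimal sequences $g_k\circ\pi_n$ is, if anything, more careful than the paper's one-line slope comparison. The gap is in Step~3. The claim $\mathrm{lip}_{d_n}(v)\downarrow\mathrm{lip}_{d_X}(v)$ for a \emph{fixed} cylinder function $v$ is false in general. It amounts to exchanging $\lim_n$ with the supremum over pairs $(y,z)$, and your contradiction argument cannot do this: $d_n\le d_X$ goes in the wrong direction, and no compactness is available to extract a limit pair.

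Here is a counterexample satisfying all the hypotheses. On $c_0$ set
\[
N_n(w):=\max\Bigl\{|w_1|,\ \max_{2\le k\le n}|2w_1-w_k|,\ \tfrac12\|w\|_\infty\Bigr\}.
\]
Let $\mu$ be the law of $\sum_k k^{-1}\xi_k e_k$, with $(\xi_k)$ i.i.d.\ standard Gaussians; this is a fully supported measure on $c_0$. Take $X_n:=(c_0,N_n,\mu)$ with $f_n=\mathrm{id}$. Every $N_n$ is equivalent to $\|\cdot\|_\infty$, so $\tau_X$ is the norm topology and $d_X=N_\infty:=\sup_nN_n$. Since $w_k\to0$ for $w\in c_0$, one has $N_\infty(w)\ge 2|w_1|$. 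Now let $v(x):=\phi(x_1)$ with $\phi(t):=(-1)\vee(t\wedge1)$, a function of $\pi_1$ only. Then $\mathrm{lip}_{d_X}v\le\frac12$ everywhere. On the other hand, $w^{(n)}:=e_1+2\sum_{k=2}^n e_k$ has $N_n(w^{(n)})=1=w^{(n)}_1$. The pairs $(x,x+\lambda w^{(n)})$ with $\lambda\downarrow0$ therefore give $\mathrm{lip}_{d_n}v(x)=1$ for every $n$ whenever $|x_1|<1$; these same pairs have $d_X$-distance $2\lambda$. Consequently $\int\mathrm{lip}_{d_n}(v)^2\,d\mu=\mu(\{|x_1|<1\})$ for all $n$. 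This is strictly more than $\int\mathrm{lip}_{d_X}(v)^2\,d\mu\le\frac14\mu(\{|x_1|<1\})$, so your dominated-convergence step fails.

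The theorem itself is not contradicted. The functions $u_n:=\phi(x_1-\tfrac12x_n)$ are $\tfrac12$-Lipschitz with respect to $d_n$ for $n\ge2$, because $|w_1-\tfrac12w_n|\le\tfrac12N_n(w)$. Moreover $\|u_n-v\|_{L^2(\mu)}\le\frac1{2n}$. This shows what is missing: a recovery sequence must consist of functions adapted to each $d_n$. You cannot get it by freezing approximants of $u$ from a density theorem and letting $n\to\infty$.

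The paper obtains such adapted functions from the last statement of \cite[Theorem~9.2]{AES}. That result gives $d_{n_k}$-Lipschitz $u_k\to u$ in $L^2(\mu_X)$ with $\Ch_{d_{n_k},\mu_X}(u_k)\to\Ch_{d_X,\mu_X}(u)$. The paper then fills in the intermediate indices by setting $v_n:=u_k$ for $n_k\le n<n_{k+1}$, using exactly the monotonicity from your Step~1. Without this input, or a new construction of $d_n$-adapted approximants, your Step~3 does not establish \eqref{c:M2}.
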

\begin{proof}
For simplicity set 
\[
\Ch_n:=\Ch_{d_n,\mu_X},\qquad \Ch:=\Ch_{d_X,\mu_X}.
\]
Since $d_n \uparrow d_X$ pointwise and $d_n\le d_{n+1}$, we have the monotonicity
$$|\SL{d_X}{u}| \le |\SL{d_{n+1}}{u}| \le |\SL{d_n}{u}|.$$
Therefore, 
\begin{equation}\label{e:Ch-mon}
\Ch\le \Ch_{n+1} \le \Ch_n\qquad  n\in\N.
\end{equation}

\smallskip
\noindent\emph{(M1).}
Let $u_n\in L^2(\mu_X)$ converge weakly to $u$ in $L^2(\mu_X)$. 
Since $\Ch$ is convex and lower semicontinuous in $L^2(\mu_X)$, it is also weakly lower semicontinuous, so
\[
\Ch(u)\le \liminf_{n\to\infty}\Ch(u_n).
\]
Using \eqref{e:Ch-mon} we obtain $\Ch(u_n)\le \Ch_n(u_n)$ for every $n$, hence
\[
\Ch(u)\le \liminf_{n\to\infty}\Ch(u_n)\le \liminf_{n\to\infty}\Ch_n(u_n),
\]
which is exactly (M1).

\smallskip
\noindent\emph{(M2).}
Fix $u\in L^2(\mu_X)$ with $\Ch(u)<\infty$.
By the last statement of \cite[Theorem~9.2]{AES}, applied to the monotone family $(d_n)_n$, there exist
a strictly increasing sequence $(n_k)_k$ and bounded $d_{n_k}$-Lipschitz functions $u_k\in L^2(\mu_X)$ such that
\[
u_k\to u\ \text{in }L^2(\mu_X)
\qquad\text{and}\qquad
\Ch_{n_k}(u_k)\to \Ch(u).
\]
Define a full sequence $(v_n)_n$ by setting $v_n:=u_k$ whenever $n_k\le n<n_{k+1}$.
Then $v_n\to u$ strongly in $L^2(\mu_X)$, and since $\Ch_n$ is monotone decreasing in $n$ (because $d_n$ is
monotone increasing), we have for $n_k\le n<n_{k+1}$:
\[
\Ch_n(v_n)=\Ch_n(u_k)\le \Ch_{n_k}(u_k).
\]
Therefore,
\[
\limsup_{n\to\infty}\Ch_n(v_n)\le \lim_{k\to\infty}\Ch_{n_k}(u_k)=\Ch(u),
\]
which is (M2).
\end{proof}

\begin{rem}\normalfont
In~\cite{LP}, the Mosco convergence of Cheeger energies has been proven in the setting that  distance functions $d_{X_n}$ are monotone increasing to a {\it non-extended} distance $d_{X}$ and the topology generated by $d_{X_n}$ and $d_X$ are the same. 
\end{rem}
\begin{cor} \label{c:IH}
Suppose the same assumption as in Theorem \ref{t:Mosco} and furthermore that $\Ch_{d_{X_n}, \mu_{X_n}}$ is infinitesimally Hilbertian for any $n \in \N$. 
Then, $\Ch_{d_X, \mu_X}$ is infinitesimally Hilbertian. 
\end{cor}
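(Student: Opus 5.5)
The plan is to pass the parallelogram identity to the limit using the Mosco convergence of Theorem~\ref{t:Mosco}, together with Lemma~\ref{l:M=M}, which transfers infinitesimal Hilbertianity of $X_n$ to the lifted energies $\Ch_n:=\Ch_{d_n,\mu_X}$.

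\emph{Step 1: each $\Ch_n$ is a quadratic form on its domain.} By Lemma~\ref{l:M=M}, $W^{1,2}(X,d_n,\mu_X)=\{g\circ\pi_n : g\in W^{1,2}(X_n)\}$ and $\Ch_n(g\circ\pi_n)=\Ch_{d_{X_n},\mu_{X_n}}(g)$. Composition with $\pi_n$ is linear, so the parallelogram law for $\Ch_{d_{X_n},\mu_{X_n}}$ transfers verbatim to $\Ch_n$ on $W^{1,2}(X,d_n,\mu_X)$. Off this domain both sides of the identity are $+\infty$, so it holds for all $u,v\in L^2(\mu_X)$ with $u\pm v$ in the domain.

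\emph{Step 2: one inequality in the limit.} Take $u,v\in W^{1,2}(X)$. By (M2) of Theorem~\ref{t:Mosco}, choose recovery sequences $u_n\to u$ and $v_n\to v$ strongly in $L^2(\mu_X)$ with $\limsup_n\Ch_n(u_n)\le\Ch(u)$ and $\limsup_n\Ch_n(v_n)\le\Ch(v)$. Discarding finitely many $n$, we may assume $u_n,v_n$ lie in the domain of $\Ch_n$, which is a linear space; hence $u_n\pm v_n$ lie there too. Then $u_n\pm v_n\to u\pm v$ strongly, so (M1) gives
\[
\Ch(u+v)+\Ch(u-v)\le\liminf_{n}\bigl(\Ch_n(u_n+v_n)+\Ch_n(u_n-v_n)\bigr)
=\liminf_n\bigl(2\Ch_n(u_n)+2\Ch_n(v_n)\bigr)\le 2\Ch(u)+2\Ch(v).
\]

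\emph{Step 3: the reverse inequality.} Apply Step 2 to $u':=u+v$ and $v':=u-v$, both in $W^{1,2}(X)$ because $\Ch$ is convex and $2$-homogeneous. This yields $\Ch(2u)+\Ch(2v)\le 2\Ch(u+v)+2\Ch(u-v)$. Since $\Ch$ is $2$-homogeneous, the left side equals $4\Ch(u)+4\Ch(v)$. Dividing by $2$ gives the reverse inequality, and hence the parallelogram law.

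The delicate point is that the recovery sequences in Step 2 must be admissible for $\Ch_n$ along the full sequence and not merely a subsequence. The construction in the proof of Theorem~\ref{t:Mosco} already produces a full sequence $v_n$. The only other fact used is the $2$-homogeneity of $\Ch$, which follows directly from Definition~\ref{d:Ch} by scaling the approximating Lipschitz sequences.
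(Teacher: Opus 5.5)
Your proof is correct and follows the same route as the paper, which simply invokes Theorem~\ref{t:Mosco} together with the argument of \cite[Thm.~7.2]{GMS}. That argument combines recovery sequences with the $\liminf$ inequality to get one half of the parallelogram law, then substitutes $u\pm v$ and uses $2$-homogeneity for the other half. You also make explicit two details the paper leaves implicit: via Lemma~\ref{l:M=M}, each $\Ch_{d_n,\mu_X}$ is quadratic on a linear domain, and the recovery sequences are full sequences indexed by the same $n$.
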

\begin{proof}
It follows from Theorem~\ref{t:Mosco} with the same argument in~\cite[Thm.~7.2]{GMS} .
\end{proof}
By the combination of Theorem~\ref{t:PCD} and Corollary~\ref{c:IH}, we obtain the stability of {\it Riemannian} curvature-dimension condition $\RCD(K,\infty)$ in the inverse system. We say that a Polish extended metric measure space $X$ satisfies {\it $\RCD(K,\infty)$ condition} if $X$ satisfies $\CD(K,\infty)$ condition and $X$ is infinitesimally Hilbertian. 
\begin{cor} \label{c:RCD}
Suppose that $X_n \prec X_{n+1}$ and $X_n$ satisfies $\RCD(K,\infty)$ for every $n \in \N$. Then, the inverse limit $X:=\varprojlim X_n$ satisfies $\RCD(K,\infty)$.
\end{cor}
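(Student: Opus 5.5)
The statement follows by combining the two stability results already established, so the plan splits into two parts according to the definition of $\RCD(K,\infty)$ as $\CD(K,\infty)$ together with infinitesimal Hilbertianity. First, since each $X_n$ satisfies $\RCD(K,\infty)$, in particular each $X_n$ satisfies $\CD(K,\infty)$. Theorem~\ref{t:PCD} then applies directly to the inverse system $\seqn{(X_n,f_n)}$ and yields $\CD(K,\infty)$ for $X=\varprojlim X_n$, with respect to the extended distance $d_X$ and the limit measure $\mu_X$. No further work is needed here, because Theorem~\ref{t:PCD} is stated for exactly this setting.

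Second, each $X_n$ is infinitesimally Hilbertian, that is, $\Ch_{d_{X_n},\mu_{X_n}}$ satisfies the parallelogram law. Corollary~\ref{c:IH} then gives infinitesimal Hilbertianity of $\Ch_{d_X,\mu_X}$. Behind that corollary, Lemma~\ref{l:M=M} transfers the parallelogram law from $\Ch_{d_{X_n},\mu_{X_n}}$ to the lifted energies $\Ch_{d_n,\mu_X}$. Theorem~\ref{t:Mosco} then gives Mosco convergence $\Ch_{d_n,\mu_X}\to\Ch_{d_X,\mu_X}$. Finally, the quadratic-form structure passes to Mosco limits, as in \cite[Thm.~7.2]{GMS}: take recovery sequences $u_n\to u$ and $v_n\to v$, apply the parallelogram identity at level $n$, and bound the $\liminf$ of $\Ch_n(u_n\pm v_n)$ from below by (M1). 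This gives one inequality of the parallelogram law for $\Ch$; the reverse inequality follows by applying the same argument to $u\pm v$.

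Putting the two parts together, $X$ satisfies both $\CD(K,\infty)$ and infinitesimal Hilbertianity, which is $\RCD(K,\infty)$ by definition. The only point I would double-check is the consistency of the Cheeger energy used in the Hilbertianity condition: it must be the one from Definition~\ref{d:Ch} on the Polish extended mm-space $(X,\tau_X,d_X,\mu_X)$, which is the energy appearing in Theorem~\ref{t:Mosco}. Since that is the case, I expect no genuine obstacle; the corollary is essentially immediate from the earlier results.
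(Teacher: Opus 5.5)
Your proposal is correct and is exactly the paper's argument: $\CD(K,\infty)$ for $X$ comes from Theorem~\ref{t:PCD}, and infinitesimal Hilbertianity comes from Corollary~\ref{c:IH}, i.e.\ the Mosco convergence of Theorem~\ref{t:Mosco} combined with the argument of \cite[Thm.~7.2]{GMS}. Your sketch of that last step is sound; the reverse inequality, obtained by applying the argument to $u+v$ and $u-v$, additionally uses the $2$-homogeneity $\Ch(2u)=4\Ch(u)$ and the fact that $u\pm v$ have finite energy whenever $u$ and $v$ do.
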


The following corollary is a direct consequence of \cite[Theorem~9.2]{AES}.
\begin{cor} \label{c:SR}
Suppose the same assumption as in Theorem~\ref{t:Mosco} and let $\{H_t^{(n)}\}_{t \ge 0}$ and $\{H_t^{X
}\}_{t \ge 0}$ be the $L^2$-gradient flows of $\Ch_{d_{n}, \mu_{X}}$ and $\Ch_{d_{X}, \mu_{X}}$ respectively.
Then, for any $f \in L^2(X, \mu_{X})$ and any $t>0$
$$H_t^{(n)}f \to H^X_t f \quad  \text{$L^2(X, \mu_X)$-strongly} \fstop$$
\end{cor}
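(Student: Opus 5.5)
The plan is to derive Corollary~\ref{c:SR} from the Mosco convergence in Theorem~\ref{t:Mosco}, using the standard equivalence between Mosco convergence of convex lower semicontinuous functionals and strong resolvent convergence of their subdifferentials. For Hilbert-valued gradient flows, the Brezis--Pazy/Attouch theorem then gives strong convergence of the semigroups. Since all the functionals $\Ch_n:=\Ch_{d_n,\mu_X}$ and $\Ch:=\Ch_{d_X,\mu_X}$ live on the single Hilbert space $L^2(X,\mu_X)$, no identification maps between varying spaces are needed. This is the framework in which the last statement of \cite[Theorem~9.2]{AES} is formulated, so alternatively one can cite it directly once its hypotheses are checked.

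The key steps are as follows.

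First, I check that each $\Ch_n$ is a proper, convex, $L^2(\mu_X)$-lower semicontinuous functional on $L^2(X,\mu_X)$. By Lemma~\ref{l:M=M}, $\Ch_n(g\circ\pi_n)=\Ch_{d_{X_n},\mu_{X_n}}(g)$, and $\Ch_n=+\infty$ off the closed subspace of $\mathcal B_n(X)$-measurable functions. Hence $\Ch_n$ is the composition of a convex lower semicontinuous functional with the isometry $g\mapsto g\circ\pi_n$, extended by $+\infty$ off a closed subspace, and lower semicontinuity follows. The same lemma identifies $H^{(n)}_t$ as the $L^2$-gradient flow of $\Ch_n$; in particular, the flow is defined for every initial datum in $L^2(\mu_X)$.

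Second, I pass from Mosco convergence to resolvent convergence. For $\lambda>0$ and $f\in L^2$, let $J^n_\lambda f$ be the unique minimiser of $\Ch_n(v)+\frac1{2\lambda}\|v-f\|^2$. Conditions (M1) and (M2) of Definition~\ref{d:Mosco} give $J^n_\lambda f\to J_\lambda f$ strongly, by the usual argument. The recovery sequence from (M2) bounds the minimum values from above. The resulting $L^2$-bound on the minimisers yields weak subsequential limits, and (M1) together with weak lower semicontinuity of the norm identifies every such limit as $J_\lambda f$. Finally, convergence of the minimum values upgrades weak convergence to norm convergence.

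Third, I pass from the resolvents to the semigroups. By the Brezis--Pazy theorem (Crandall--Liggett exponential formula with uniform contractivity), resolvent convergence for all $\lambda$ implies $H^{(n)}_t f\to H^X_t f$ for $f\in\overline{D(\Ch)}$, locally uniformly in $t$. Since each flow is a contraction and $D(\Ch)$ is dense in $L^2(\mu_X)$, an $\varepsilon/3$ argument extends this to all $f$. Density holds because $\mathrm{Lip}_b(d_n,\mathcal B(X))$ functions of the form $g\circ\pi_n$ with $g\in\mathrm{Lip}_b(X_n)$ lie in $D(\Ch)$ by \eqref{e:Ch-mon}, and they are dense by the martingale argument of Lemma~\ref{l;weakconv}.

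The main obstacle is the density and domain issue in the third step. The nonlinear semigroup theory only converges on the closure of the domains, so one must ensure that $\overline{D(\Ch)}=L^2(\mu_X)$ and that the $H^{(n)}_t$ are genuinely nonexpansive on all of $L^2$. Beyond the density argument, this needs care about Borel measurability of the relevant Lipschitz functions with respect to $\tau_X$.
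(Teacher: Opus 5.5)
Your route is correct but differs from the paper's. The paper gives no argument: it cites the last statement of \cite[Theorem~9.2]{AES}, which directly asserts convergence of the flows for the monotone family $d_n\uparrow d_X$. You instead derive the flow convergence from the Mosco convergence of Theorem~\ref{t:Mosco}, via the abstract chain \emph{Mosco convergence $\Rightarrow$ strong resolvent convergence $\Rightarrow$ semigroup convergence} (Attouch, Brezis--Pazy). Your Step~2 argument for resolvent convergence and your density argument for $\overline{D(\Ch)}=L^2(\mu_X)$ are both sound. What your approach buys is transparency. It shows that only Mosco convergence on the fixed Hilbert space $L^2(X,\mu_X)$ is needed, so the corollary holds for any family of energies satisfying Theorem~\ref{t:Mosco}. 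It also exposes a domain issue that the one-line citation hides. The paper's approach is shorter and avoids nonlinear semigroup theory altogether.

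One correction is needed. In Step~1 you claim that $H^{(n)}_t$ is defined for every initial datum in $L^2(\mu_X)$. This is false: $\overline{D(\Ch_n)}=L^2(X,\mathcal B_n(X),\mu_X)$ with $\mathcal B_n(X)=\pi_n^{-1}\mathcal B(X_n)$. That is in general a proper closed subspace, and a gradient flow in the paper's sense can only start from the closure of the domain. For the same reason, Brezis--Pazy does not give $H^{(n)}_t f\to H^X_t f$. It gives $H^{(n)}_t f_n\to H^X_t f$ for $f_n\in\overline{D(\Ch_n)}$ with $f_n\to f$.

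The clean fix is to read $H^{(n)}_t f$ as $H^{(n)}_t\,\mathbb E[f\mid\mathcal B_n(X)]$. This is exactly what the Crandall--Liggett formula $\lim_k (J^n_{t/k})^k f$ produces, since $J^n_\lambda f$ tends to the orthogonal projection of $f$ onto $\overline{D(\Ch_n)}$ as $\lambda\downarrow 0$. Then take $f_n:=\mathbb E[f\mid\mathcal B_n(X)]$, which converges to $f$ in $L^2(\mu_X)$ by martingale convergence. With this convention, nonexpansiveness on all of $L^2$ is automatic, and your density argument supplies $f\in\overline{D(\Ch)}$. The closing $\varepsilon/3$ step then becomes unnecessary. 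The paper's statement relies on the same implicit convention: it defines $H^{(n)}_t$ only on functions of the form $g\circ\mathrm p_n$, yet applies it to arbitrary $u$ in the proof of Theorem~\ref{t:LH}.
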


\section{Functional Inequalities} \label{s:FI} 
In this section, we prove the stability of several functional inequalities under  the inverse limit.
We use the shorthand notation $\mu_X(u)=\int_X u \diff \mu_X$ for $u \in L^1(\mu_X)$.
\begin{defn} \label{d:FI}
Let $X=(X, \tau_X, d_X, \mu_X)$ be a Polish extended mm-space and $\Ch=\Ch_{d_X, \mu_X}$ be the Cheeger energy on $X$.
\begin{enumerate}
\item We say that $X$ supports {\it the Poincar\'e inequality $\PI(C_P)$} if there exists a constant $C_P>0$ so that 
$$ \mu_X(|u-\mu_X(u)|) \le C_P \Ch(u)^{1/2} \quad \forall u \in \Lip_b(d_X, \mathcal{B}(X)) \fstop$$
\item We say that $X$ supports {\it the logarithmic Sobolev inequality $\LS(C_{LS}, D_{LS})$} if there exist constants $C_{LS}, D_{LS}>0$ so that 
$$\Ent{\mu_X}{u^2} \le C_{LS} \Ch(u) +  D_{LS}\mu_X(u^2) \quad \forall u \in W^{1,2}(X) \comma$$
where $\Ent{\mu_X}{u^2}:=\int_X u^2 \log u^2 \diff \mu_X$.
\item We say that $X$ supports {\it the Talagrand inequality $\TL(C_{T})$} if 
\begin{align*}
W_2(\mu_X, \sigma)^2 \le C_{T}\ENT_{\mu_X}(\sigma) \quad \forall \sigma \in \mathcal{P}_*(X) \fstop
\end{align*} 
\end{enumerate}
\end{defn}

Let $\{H_t\}_{t \ge 0}=\{H_t^{X}\}_{t \ge 0}$ be the $L^2$-gradient flow associated with~$\Ch=\Ch_{d_X, \mu_X}$.
We recall functional inequalities related to the Ricci curvature lower bound. 
\begin{defn} \label{d:CFI}
Let $X=(X, \tau_X, d_X, \mu_X)$ be a Polish extended mm-space and $K \in \R$, $\alpha \in (1, \infty)$.
The constant $K/(1-e^{-2Kt})$ is regarded as $1/2t$ if $K=0$.
\begin{enumerate}
\item We say that $X$ supports {\it the log-Harnack inequality $\LH(K)$} if for every non-negative $u \in L^\infty (\mu_X)$ and $t>0$, there exists $\Omega \subset X$ with $\mu_X(\Omega)=1$ such that 
$$H_t(\log u)(x) \le \log (H_tu)(y) + \frac{K d_X(x, y)^2}{2(1-e^{-2Kt})} \quad \text{$x, y \in \Omega$}\fstop$$

\item We say that $X$ supports {\it the dimension-free Harnack inequality $\DFH(K)$} if for every non-negative $u \in L^\infty (\mu_X)$ and $t>0$,there exists $\Omega \subset X$ with $\mu_X(\Omega)=1$ such that 
$$(H_tu)^\alpha(y)\le H_tu^\alpha(x) \exp\Bigl\{ \frac{\alpha K d_X(x, y)^2}{2(\alpha-1)(1-e^{-2Kt})}\Bigr\} \quad \text{$x, y \in \Omega$}\fstop$$

%
%

\item We say that $X$ supports {\it the Evolution Variational Inequality $\EVI(K,\infty)$} if for any $\sigma=\rho\cdot \mu_X \in D(\ENT_{\mu_X})$, $\nu \in D(\ENT_{\mu_X})$ with $W_{2, d_X}(\sigma, \nu)<\infty$, the following inequality holds for all $t>0$:
$$\frac{1}{2}\frac{d^+}{dt}W_{2, d_X}\bigl((H_t \rho) \cdot \mu_X, \nu \bigr)^2 + \frac{K}{2} W_{2, d_X}\bigl((H_t \rho) \cdot \mu_X, \nu\bigr)^2 \le \Ent{\mu_X}{\nu} - \Ent{\mu_X}{(H_t \rho) \cdot \mu_X}\comma$$
where $\frac{d^+}{dt}$ stands for the upper right derivative in $t$. 
\end{enumerate}
\end{defn}

\subsection{Stability for $\LS, \PI, \LH, \DFH$}

\begin{thm}[log-Sobolev, Poincar\'e inequalities]\label{t:P}
Let $\seqn{X_n}$ be a sequence of mm-spaces with $X_n \prec X_{n+1}$ and  suppose that $\seqn{X_n}$ satisfies  $\LS(C_{1, n}, D_{1, n})$ $($resp.\  $\PI(C_{2, n}))$ with $C_1:=\lim_{n\to \infty}C_{1,n}<\infty$, $D_1:=\lim_{n\to \infty}D_{1,n}<\infty$ $($resp. $C_2:=\lim_{n \to \infty}C_{2,n}<\infty)$. Then the inverse limit $X$ satisfies $\LS(C_{1}, D_1)$ $($resp.\  $\PI(C_{2}))$.
\end{thm}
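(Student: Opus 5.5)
The plan is to use the Mosco convergence of Theorem~\ref{t:Mosco}, specifically its recovery-sequence part (M2), together with the identification of the lifted energies in Lemma~\ref{l:M=M}. The target is to verify the inequalities on $X$ for a general $u$ by approximating $u$ with $\mathcal B_n(X)$-measurable functions $v_n = g_n\circ\pi_n$ for which the inequality on $X_n$ can be invoked.

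\textbf{Log-Sobolev.} Fix $u\in W^{1,2}(X)$. By (M2) there exist $v_n\in W^{1,2}(X,d_n,\mu_X)$ with $v_n\to u$ strongly in $L^2(\mu_X)$ and $\limsup_n \Ch_n(v_n)\le \Ch(u)$. By Lemma~\ref{l:M=M}, $v_n=g_n\circ\pi_n$ with $g_n\in W^{1,2}(X_n)$ and $\Ch_n(v_n)=\Ch_{d_{X_n},\mu_{X_n}}(g_n)$. Since $(\pi_n)_*\mu_X=\mu_{X_n}$, we have $\Ent{\mu_X}{v_n^2}=\Ent{\mu_{X_n}}{g_n^2}$ and $\mu_X(v_n^2)=\mu_{X_n}(g_n^2)$. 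Applying $\LS(C_{1,n},D_{1,n})$ on $X_n$ gives
\[
\int_X v_n^2\log v_n^2\diff\mu_X\le C_{1,n}\Ch_n(v_n)+D_{1,n}\mu_X(v_n^2).
\]
To pass to the limit, I will extract a subsequence with $v_n\to u$ $\mu_X$-a.e. The function $s\log s$ is bounded below by $-1/e$, so Fatou's lemma gives lower semicontinuity of the left side. On the right, $\mu_X(v_n^2)\to\mu_X(u^2)$, $C_{1,n}\to C_1$ and $D_{1,n}\to D_1$. This yields $\LS(C_1,D_1)$ on $X$.

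\textbf{Poincar\'e.} Here the inequality is stated for $u\in \Lip_b(d_X,\mathcal B(X))$, but it suffices to prove it for all $u\in W^{1,2}(X)$. Take the same recovery sequence $v_n=g_n\circ\pi_n$. One point needs checking: $\PI$ on $X_n$ is stated only for bounded Lipschitz $g$. I would extend it to all $g\in W^{1,2}(X_n)$ by density in energy, using the standard fact that on the mm-space $X_n$ the Cheeger energy is the relaxation of slopes of bounded Lipschitz functions. The left side $\mu(|g-\mu(g)|)$ is $L^2$-continuous, so the inequality passes to such approximations. Then
\[
\mu_X(|v_n-\mu_X(v_n)|)=\mu_{X_n}(|g_n-\mu_{X_n}(g_n)|)\le C_{2,n}\Ch_n(v_n)^{1/2}.
\]
The left side converges to $\mu_X(|u-\mu_X(u)|)$ by $L^1$-convergence, since $L^2$-convergence on a probability space implies it. Taking $\limsup$ on the right gives $\PI(C_2)$.

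The main obstacle is not the limit passage but ensuring that the recovery sequence of Theorem~\ref{t:Mosco} actually lies in the lifted Sobolev spaces of the form $g_n\circ\pi_n$. The functions obtained there from \cite[Theorem~9.2]{AES} are $d_{n_k}$-Lipschitz. Such functions are constant on the fibres of $\pi_{n_k}$, but they need not be obviously $\mathcal B_{n_k}(X)$-measurable in a form to which Lemma~\ref{l:M=M} applies. My first approach is to note that $\Ch_{n_k}(u_k)<\infty$ forces $u_k$ into $W^{1,2}(X,d_{n_k},\mu_X)$ by the definition of the lifted energy, since the energy is $+\infty$ otherwise. This gives the required representation $u_k=g\circ\mathrm p_{n_k}$, and hence $u_k=g'\circ\pi_{n_k}$ after the identification of $X^{\mathrm q}_{n_k}$ with a full-measure subset of $X_{n_k}$ made in Lemma~\ref{l:M=M}.
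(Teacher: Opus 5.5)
Your proposal is correct and follows the paper's proof. You take the Mosco recovery sequence $v_n\in W^{1,2}(X,d_n,\mu_X)$ from Theorem~\ref{t:Mosco}, transfer the inequality from $X_n$ via Lemma~\ref{l:M=M}, and pass to the limit using lower semicontinuity of $\int v^2\log v^2\diff\mu_X$ under strong $L^2$-convergence. Your extra details correctly fill in what the paper leaves implicit: Fatou along an a.e.-convergent subsequence, and extending $\PI(C_{2,n})$ from bounded Lipschitz functions to $W^{1,2}(X_n)$ via an optimal relaxation sequence. Finally, the membership $v_n\in\F_n$ that you flag as the main obstacle is already part of (M2) in Definition~\ref{d:Mosco}.
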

\begin{proof}
By hypothesis, we have 
$${\rm Ent}_n(u^2) \le C_{1, n}{\sf Ch}_{d_{X_n}, \mu_{X_n}}(u) + D_{1, n} \|u\|_{L^2(\mu_{X_n})}^2 \quad u \in W^{1,2}(X_n, d_{X_n}, \mu_{X_n})\fstop$$
By Lemma~\ref{l:M=M}, 
$${\rm Ent}(u^2\circ\pi_n) \le C_{1, n}{\sf Ch}_{d_n, \mu}(u\circ \pi_n) + D_{1, n} \|u\circ \pi_n\|_{L^2(\mu)}^2 \fstop$$
By Theorem \ref{t:Mosco}, for any $u \in W^{1,2}(X, d_X, \mu_X)$, there exists $u_n \in W^{1,2}(X, d_{n}, \mu_{X})$ such that $u_n$ converges to $u$ strongly in $L^2(\mu)$ and $\limsup_{n \to \infty} {\sf Ch}_{d_{n}, \mu_X}(u_n) \le \Ch_{d_X, \mu_X}(u)$.  By the lower semicontinuity of the entropy with respect to $L^2$-strong convergence, we conclude 
$${\rm Ent}(u^2) \le C_{1}{\sf Ch}_{d_{X}, \mu_X}(u) + D_{1} \|u\|_{L^2(\mu)}^2 \quad u \in W^{1,2}(X, d_{X}, \mu)\fstop$$

The proof for the stability of the Poincar\'e inequality is similar, so we omit the proof. 
\end{proof}
 
\begin{thm}[Talagrand inequality]\label{t:P1} 
Let $\seqn{X_n}$ be a sequence of mm-spaces with $X_n \prec X_{n+1}$ $(\forall n \in \N)$ and suppose that $\seqn{X_n}$ satisfies~${\rm T}(C_{n, T})$  with $C_T:=\lim_{n\to \infty}C_{n, T}<\infty$, Then the inverse limit $X$ satisfies $\TL(C_{T})$.
\end{thm}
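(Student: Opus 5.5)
The plan is to project $\sigma$ down to each $X_n$, apply $\TL(C_{n,T})$ there, and pass to the limit using the tools of Lemma~\ref{l:MI}. Fix $\sigma\in\Pac(X)$. If $\Ent{\mu_X}{\sigma}=\infty$ there is nothing to prove, so assume $\sigma\in D(\ENT_{\mu_X})$. Set $\sigma_n:=(\pi_n)_*\sigma\in\mathcal P(X_n)$. Then $\sigma_n\ll\mu_{X_n}$, because $\mu_{X_n}(A)=0$ implies $\mu_X(\pi_n^{-1}A)=0$ and hence $\sigma(\pi_n^{-1}A)=0$. By the entropy contraction recorded in Lemma~\ref{l:MI},
\[
\Ent{\mu_{X_n}}{\sigma_n}\le \Ent{\mu_X}{\sigma}.
\]
The hypothesis on $X_n$ then gives
\[
W_{2,d_{X_n}}(\mu_{X_n},\sigma_n)^2\le C_{n,T}\,\Ent{\mu_X}{\sigma}.
\]

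The remaining task is the lower bound $W_{2,d_X}(\mu_X,\sigma)\le\liminf_n W_{2,d_{X_n}}(\mu_{X_n},\sigma_n)$. Once this holds, letting $n\to\infty$ and using $C_{n,T}\to C_T$ finishes the proof. Lemma~\ref{l:MI} cannot be quoted verbatim, since it assumes $W_{2,d_X}(\nu_0,\nu_1)<\infty$, which is exactly what we do not yet know. However, only Step~2 of its proof is needed, and Step~2 never uses that finiteness except to make the infimum finite. I will therefore repeat it with $\nu_0=\mu_X$, whose density is $1$, and $\nu_1=\sigma$.

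Write $\sigma^{(n)}:=(\rho_n\circ\pi_n)\mu_X$, where $\sigma_n=\rho_n\mu_{X_n}$, and note that the corresponding lift of $\mu_{X_n}$ is $\mu_X$ itself. By Lemma~\ref{l;weakconv}, $\sigma^{(n)}\rightharpoonup\sigma$. By the identity \eqref{e:W-ident-MI},
\[
W_{2,d_{X_n}}(\mu_{X_n},\sigma_n)=W_{2,d_n}(\mu_X,\sigma^{(n)}).
\]
Pass to a subsequence realising the $\liminf$; if that $\liminf$ is $+\infty$ there is nothing to prove. Choose optimal couplings $\Gamma_n\in\Cpl(\mu_X,\sigma^{(n)})$ for the cost $d_n^2$. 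These exist because $d_n$ is continuous and $W_{2,d_n}(\mu_X,\sigma^{(n)})<\infty$, the latter by the Talagrand bound at level $n$. The marginals are tight, so Prokhorov's theorem gives a further subsequence with $\Gamma_n\rightharpoonup\Gamma\in\Cpl(\mu_X,\sigma)$. The monotone argument of Step~2 (fix $m$, use the continuity of $d_m^2$ and $d_n\ge d_m$ for $n\ge m$, then take $\sup_m$ with monotone convergence) yields
\[
\liminf_n\int d_n^2\,d\Gamma_n\ge\int d_X^2\,d\Gamma\ge W_{2,d_X}(\mu_X,\sigma)^2.
\]
Alternatively, Theorem~\ref{t:AES}(2) gives the same inequality directly.

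Combining the two displays gives $W_{2,d_X}(\mu_X,\sigma)^2\le C_T\,\Ent{\mu_X}{\sigma}$, which is $\TL(C_T)$. I expect the main obstacle to be the one already isolated: justifying the lower-semicontinuity step without a priori finiteness of $W_{2,d_X}(\mu_X,\sigma)$. I plan to handle it by checking that Step~2 of Lemma~\ref{l:MI} is self-contained, since finiteness of the $n$-level distances is supplied by the Talagrand inequality on $X_n$. A side benefit is that the argument shows $W_{2,d_X}(\mu_X,\sigma)<\infty$ for every finite-entropy $\sigma$.
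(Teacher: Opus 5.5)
Your proposal is correct and follows essentially the paper's route: project $\sigma$ to $X_n$, use entropy contraction and $\TL(C_{n,T})$ there, and pass to the limit through lower semicontinuity of $W_2$ via the lifted couplings. The paper first obtains finiteness of $W_{2,d_X}(\mu_X,\sigma)$ from this lower semicontinuity and then invokes Lemma~\ref{l:MI} to upgrade the $\liminf$ to a limit; you instead observe that the $\liminf$ bound from Step~2 (which does not need the a~priori finiteness) already suffices, and this also avoids the paper's loose intermediate bound by $\sup_n C_{n,T}$.
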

\begin{proof} 
Let $\sigma \in \mathcal P_*(X)$ and set $\sigma_n:=(\pi_n)_*\sigma \in \mathcal P_*(X_n)$.
By the hypothesis that $X_n$ possesses~$\TL(C_{n, T})$, we have that 
$$W_2(\mu_{X_n}, \sigma_n)^2 \le C_{n, T} \ENT_{\mu_{X_n}}(\sigma_n) \fstop$$
We may assume that $ \ENT_{\mu_{X}}(\sigma)<\infty$, otherwise, the conclusion is trivially true. Since $\sup_{n \in \N} \ENT_{\mu_{X_n}}(\sigma_n) \le \ENT_{\mu_{X}}(\sigma)$,  the joint lower semicontinuity of $W_2$ yields 
 $$W_2(\mu_{X}, \sigma)^2 \le  \liminf_{n \to \infty} W_2(\mu_{X_n}, \sigma_n)^2 \le \sup_{n \in \N}C_{n,T} \ENT_{\mu_{X_n}}(\sigma_n) \le C_T\ENT_{\mu_{X}}(\sigma) <\infty.$$
By Lemma~\ref{l:MI}, we can upgrade the limit infimum to the limit and conclude 
$$W_2(\mu_{X}, \sigma)^2 = \lim_{n \to \infty} W_2(\mu_{X_n}, \sigma_n)^2 \le  \limsup_{n \to \infty}C_{n, T} \ENT_{\mu_{X_n}}(\sigma_n) \le C_{T} \ENT_{\mu_{X}}(\sigma) \fstop \qedhere$$
\end{proof}
\begin{thm}[Dimension-free/log-Harnack inequalities]\label{t:LH} 
Let $\seqn{X_n}$ be a sequence of  mm-spaces with $X_n \prec X_{n+1}$ satisfying $\DFH(C_{n})$ $($resp.\ $\LH(C_n)$$)$ with $C:=\lim_{n \to \infty}C_{n}<\infty$. Then the inverse limit $X$ satisfies $\DFH(C)$ $($resp.\ $\LH(C)$$)$.
\end{thm}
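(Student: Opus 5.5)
We transfer the Harnack inequality from $X_n$ to the lifted semigroup $H^{(n)}_t$ on $X$, and then pass to the limit $n\to\infty$ using the strong convergence of heat flows in Corollary~\ref{c:SR}. We describe the argument for $\DFH$; the $\LH$ case is identical, with $\log$ in place of the power $\alpha$.

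\emph{Step 1 (the inequality at level $n$, for $\mathcal B_n$-measurable data).} Let $u=g\circ\pi_n$ with $0\le g\in L^\infty(\mu_{X_n})$. By Lemma~\ref{l:M=M}, $H^{(n)}_t u=(H^{X_n}_t g)\circ\pi_n$ and $H^{(n)}_t u^\alpha=(H^{X_n}_t g^\alpha)\circ\pi_n$. The hypothesis $\DFH(C_n)$ on $X_n$ gives a full-measure set $\Omega_n\subset X_n$ on which the inequality holds with $d_{X_n}$. Its preimage $\pi_n^{-1}(\Omega_n)$ has full $\mu_X$-measure, and $d_n\le d_X$, so the right-hand side can only increase when $d_n$ is replaced by $d_X$. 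This yields
$$(H^{(n)}_tu)^\alpha(y)\le H^{(n)}_tu^\alpha(x)\exp\Bigl\{\frac{\alpha C_n d_X(x,y)^2}{2(\alpha-1)(1-e^{-2C_nt})}\Bigr\}$$
for $x,y\in\pi_n^{-1}(\Omega_n)$.

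\emph{Step 2 (general $u$).} For $0\le u\in L^\infty(\mu_X)$, set $u_m:=\mathbb E[u\mid\mathcal B_m]$, so that $u_m=g_m\circ\pi_m$, $0\le u_m\le\|u\|_\infty$, and $u_m\to u$ in $L^2$ by martingale convergence (as in the proof of Lemma~\ref{l;weakconv}). Since $\mathcal B_m\subset\mathcal B_n$ for $m\le n$, the function $u_m$ is also of the form $g\circ\pi_n$, and Step 1 applies to it at every level $n\ge m$.

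\emph{Step 3 (passing to the limit).} Fix $m$ and let $n\to\infty$. By Corollary~\ref{c:SR}, $H^{(n)}_t u_m\to H_t u_m$ and $H^{(n)}_t u_m^\alpha\to H_tu_m^\alpha$ in $L^2$. Extract a subsequence along which both converge $\mu_X$-a.e., on a full-measure set $\Omega'$. Intersecting over the countably many sets $\pi_n^{-1}(\Omega_n)$, the inequality holds at every pair $(x,y)$ in a common full-measure set. Taking limits pointwise, and using $C_n\to C$ so that the constant $\frac{C_n}{1-e^{-2C_nt}}$ converges to $\frac{C}{1-e^{-2Ct}}$ (interpreted as $1/2t$ when $C=0$, which causes no issue since the function is continuous in $C$), gives the inequality for $u_m$ with constant $C$. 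Then let $m\to\infty$. The heat semigroup is $L^2$-contractive, so $H_tu_m\to H_tu$. Moreover $u_m^\alpha\to u^\alpha$ in $L^2$ by boundedness and dominated convergence along an a.e. subsequence. A further a.e. subsequence and another countable intersection of full-measure sets then give the claim on a set $\Omega$ with $\mu_X(\Omega)=1$.

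For $\LH$, the same scheme works once $\log u$ is made integrable: replace $u$ by $u+\varepsilon$, run the argument (here $\log(u_m+\varepsilon)$ is bounded), and let $\varepsilon\downarrow0$ at the end using monotone convergence.

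The main obstacle is the measurability bookkeeping in Step 3. The inequality is asserted for all pairs $x,y$ in a full-measure set, not merely for a.e. pairs, so one must ensure that each limit is taken pointwise on a single fixed full-measure set. The plan is to choose a.e.-convergent subsequences first and only then intersect the countably many exceptional sets, so that pointwise evaluation at arbitrary pairs $(x,y)$ in the final set is legitimate; $d_X$ itself does not depend on $n$, so it causes no difficulty here. A secondary point is checking that Corollary~\ref{c:SR} applies to the specific functions $u_m$ and $u_m^\alpha$, which is immediate since both lie in $L^2(\mu_X)$.
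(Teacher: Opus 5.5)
Your proposal is correct and follows essentially the paper's argument. You approximate $u$ by the conditional expectations $\mathbb E[u\mid\mathcal B_n(X)]$, lift $\DFH(C_n)$ to $H^{(n)}_t$ via Lemma~\ref{l:M=M}, pass to the limit with Corollary~\ref{c:SR} along an a.e.-convergent subsequence, and intersect countably many full-measure sets. The differences are only in the bookkeeping: the paper uses the diagonal sequence $H^{(n)}_t u_n$ with the contraction bound $\|H^{(n)}_t u_n-H^{(n)}_t u\|_{L^2}\le\|u_n-u\|_{L^2}$, so a single limit suffices, and keeps $d_n$ until letting $d_{n_k}\uparrow d_X$; your iterated limit ($n\to\infty$, then $m\to\infty$) and your upfront replacement of $d_n$ by $d_X$ (legitimate since $C_n/(1-e^{-2C_nt})>0$) work equally well.
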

\begin{proof} 
We prove first the stability of the dimension-free Harnack inequality.
Fix \(\alpha\in(1,\infty)\), \(t>0\), and \(u\in L^\infty_+(X,\mu_X)\).
Set $u_n$ to be the conditional expectation:
\[
\mathcal B_n(X):=\pi_n^{-1}\mathcal B(X_n),
\qquad
u_n:=\mathbb E[u\mid \mathcal B_n(X)].
\]
Then \(u_n\in L^\infty_+(X,\mu_X)\), \(\|u_n\|_{L^\infty}\le \|u\|_{L^\infty}\), and
\(u_n\) is \(\mathcal B_n(X)\)-measurable. Hence,  there exists
\(v_n\in L^\infty_+(X_n,\mu_{X_n})\) such that
\[
u_n=v_n\circ\pi_n \qquad \mu_X\text{-a.e.}
\]
Since \(\{\mathcal B_n(X)\}_{n\in\mathbb N}\) is an increasing family of \(\sigma\)-algebras
generating \(\mathcal B(X)\), the martingale convergence theorem yields
\[
u_n\to u \quad \mu_X\text{-a.e. and in }L^p(X,\mu_X)
\quad\text{for every }1\le p<\infty.
\]
In particular,
\[
u_n\to u \quad\text{and}\quad u_n^\alpha\to u^\alpha
\qquad\text{strongly in }L^2(X,\mu_X).
\]

For \(r\in\mathbb R\), define
\[
\kappa_t(r):=
\begin{cases}
\dfrac{r}{1-e^{-2rt}}, & r\neq 0,\\[2mm]
\dfrac{1}{2t}, & r=0.
\end{cases}
\]
By the assumption that \(X_n\) satisfies \(\DFH(C_n)\), there exists a Borel set
\(\Omega_n\subset X_n\) with \(\mu_{X_n}(\Omega_n)=1\) such that, for every
\(x_n,y_n\in\Omega_n\),
\[
(H_t^n v_n)^\alpha(y_n)
\le
H_t^n(v_n^\alpha)(x_n)
\exp\Biggl(
\frac{\alpha \kappa_t(C_n)d_{X_n}(x_n,y_n)^2}{2(\alpha-1)}
\Biggr).
\]
Set
\[
\widetilde\Omega_n:=\pi_n^{-1}(\Omega_n).
\]
Then \(\mu_X(\widetilde\Omega_n)=1\). By Lemma~\ref{l:M=M}, for every
\(x,y\in\widetilde\Omega_n\),
\[
(H_t^{(n)}u_n)^\alpha(y)
\le
H_t^{(n)}(u_n^\alpha)(x)
\exp\Biggl(
\frac{\alpha \kappa_t(C_n)d_n(x,y)^2}{2(\alpha-1)}
\Biggr).
\]

Set
\[
A_n:=H_t^{(n)}u_n,
\qquad
B_n:=H_t^{(n)}(u_n^\alpha).
\]
By the \(L^2\)-contraction property of \(H_t^{(n)}\), Corollary~\ref{c:SR}, and the strong
\(L^2\)-convergence of \(u_n\) to \(u\), we have
\[
\begin{aligned}
\|A_n-H_tu\|_{L^2(\mu_X)}
&\le
\|H_t^{(n)}u_n-H_t^{(n)}u\|_{L^2(\mu_X)}
+
\|H_t^{(n)}u-H_tu\|_{L^2(\mu_X)} \\
&\le
\|u_n-u\|_{L^2(\mu_X)}
+
\|H_t^{(n)}u-H_tu\|_{L^2(\mu_X)}
\to 0.
\end{aligned}
\]
Similarly,
\[
B_n\to H_t(u^\alpha)
\qquad\text{strongly in }L^2(X,\mu_X).
\]
Thus there exists a subsequence \(\{n_k\}_{k\in\mathbb N}\) and a Borel set
\(\Omega_\infty\subset X\) with \(\mu_X(\Omega_\infty)=1\) such that
\[
A_{n_k}(z)\to H_tu(z),
\qquad
B_{n_k}(z)\to H_t(u^\alpha)(z)
\]
for every \(z\in\Omega_\infty\).

Define
\[
\Omega:=\Omega_\infty\cap\bigcap_{k=1}^\infty \widetilde\Omega_{n_k}.
\]
Then \(\Omega\) is Borel and \(\mu_X(\Omega)=1\). For every \(x,y\in\Omega\), we have
\[
(A_{n_k})^\alpha(y)
\le
B_{n_k}(x)
\exp\Biggl(
\frac{\alpha \kappa_t(C_{n_k})d_{n_k}(x,y)^2}{2(\alpha-1)}
\Biggr)
\qquad \forall k\in\mathbb N.
\]
Since \(C_{n_k}\to C\), \(\kappa_t(C_{n_k})\to \kappa_t(C)\), and
\(d_{n_k}(x,y)\uparrow d_X(x,y)\), passing to the limit gives
\[
(H_tu)^\alpha(y)
\le
H_t(u^\alpha)(x)
\exp\Biggl(
\frac{\alpha \kappa_t(C)d_X(x,y)^2}{2(\alpha-1)}
\Biggr)
\]
for every \(x,y\in\Omega\). This proves \(\DFH(C)\). The proof of the log-Harnack is similar (first prove it for $u+\e$, then let $\e \to 0$), so we omit the proof. 
\end{proof}

\subsection{Stability for $\EVI$}
\begin{thm}[EVI]\label{t:EVI} 
Let $\seqn{X_n}$ be a sequence of mm-spaces with $X_n \prec X_{n+1}$ satisfying $\EVI(K_n, \infty)$ with $K = \lim_{n \to \infty}K_n$
and $X:=\varprojlim X_n$ its inverse limit. 
Then $X$ satisfies $\EVI(K,\infty)$.
\end{thm}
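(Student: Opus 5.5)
We propose to use the integrated form of $\EVI$, which is more stable under limits than the differential form. For $0\le s<t$ and admissible $\sigma=\rho\mu_X$, $\nu$, we will show that $\EVI(K,\infty)$ on $X$ is equivalent to
\begin{align*}
\frac{e^{K(t-s)}}{2}W_{2,d_X}(\rho_t\mu_X,\nu)^2-\frac12 W_{2,d_X}(\rho_s\mu_X,\nu)^2 \le I_K(t-s)\Bigl(\Ent{\mu_X}{\nu}-\Ent{\mu_X}{\rho_t\mu_X}\Bigr),
\end{align*}
where $\rho_t:=H_t\rho$ and $I_K(r)=\int_0^r e^{K\tau}\diff\tau$. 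We will use the monotonicity of $t\mapsto\Ent{\mu_X}{\rho_t\mu_X}$ along the heat flow. The equivalence is a purely real-variable argument (Gronwall-type) and holds in the extended setting as long as the Wasserstein distances involved stay finite. We plan to derive this integrated inequality on $X$ from the corresponding one on each $X_n$, where it follows from $\EVI(K_n,\infty)$.

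\emph{Approximation.} Given $\sigma=\rho\mu_X$ and $\nu$ in $D(\ENT_{\mu_X})$ with $W_{2,d_X}(\sigma,\nu)<\infty$, set $\sigma_n:=(\pi_n)_*\sigma=\underline\rho_n\mu_{X_n}$ and $\nu_n:=(\pi_n)_*\nu$. Their lifts are $\rho^{(n)}:=\underline\rho_n\circ\pi_n=\mathbb E[\rho\mid\mathcal B_n(X)]$ and $\nu^{(n)}$. By Lemma~\ref{l:MI}, the entropies satisfy $\ENT_{\mu_{X_n}}\le \ENT_{\mu_X}$, and $W_{2,d_{X_n}}(\sigma_n,\nu_n)\to W_{2,d_X}(\sigma,\nu)$. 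By Lemma~\ref{l:M=M}, $H^{X_n}_t\underline\rho_n\circ\pi_n=H^{(n)}_t\rho^{(n)}$. Apply the integrated $\EVI(K_n,\infty)$ on $X_n$ to the pair $(\sigma_n,\nu_n)$. Then we pass to the limit term by term.
\begin{itemize}
\item[(a)] \emph{The term $W(\rho_s,\nu)$ with $s$ fixed.} If $\rho\in L^2$, Corollary~\ref{c:SR} together with $\rho^{(n)}\to\rho$ gives $H^{(n)}_s\rho^{(n)}\to H_s\rho$ in $L^2$, by the same contraction argument as in Theorem~\ref{t:LH}. We need $\limsup_n W_{2,d_{X_n}}\bigl((H^{X_n}_s\underline\rho_n)\mu_{X_n},\nu_n\bigr)\le W_{2,d_X}\bigl((H_s\rho)\mu_X,\nu\bigr)$. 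Here $(\pi_n)_*\bigl((H_s\rho)\mu_X\bigr)$ is generally \emph{not} $(H^{X_n}_s\underline\rho_n)\mu_{X_n}$, so the projection trick of Step~3 of Lemma~\ref{l:MI} does not apply directly. To handle this, we will take $s=0$ only (where the projection identity holds) and use the semigroup property on $X$ afterwards. The integrated inequality from $s=0$ to $t$ for every initial datum suffices to recover $\EVI$, because we can restart at $\rho_s$ in place of $\rho$.
\item[(b)] \emph{The term $W(\rho_t,\nu)$.} Use lower semicontinuity: on $X$, lift via $d_n\le d_X$ and apply Theorem~\ref{t:AES}(2) together with weak convergence of $(H^{(n)}_t\rho^{(n)})\mu_X\to(H_t\rho)\mu_X$. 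This gives $W_{2,d_X}(\rho_t\mu_X,\nu)\le\liminf_n W_{2,d_{X_n}}$, by the lifted identity~\eqref{e:W-ident-MI} and the Step~2 argument of Lemma~\ref{l:MI}.
\item[(c)] \emph{The entropy of $\nu$.} We have $\Ent{\mu_{X_n}}{\nu_n}\le\Ent{\mu_X}{\nu}$.
\item[(d)] \emph{The entropy of $\rho_t$.} We need $\liminf_n\Ent{\mu_X}{H^{(n)}_t\rho^{(n)}\mu_X}\ge \Ent{\mu_X}{\rho_t\mu_X}$, which is lower semicontinuity of the entropy under weak convergence.
\end{itemize}
Finally, $K_n\to K$ gives convergence of the coefficients $e^{K_n t}$ and $I_{K_n}(t)$.

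\emph{Technical reductions.} First reduce to $\rho$ bounded, so that $\rho\in L^2$ and Corollary~\ref{c:SR} applies. A general $\rho$ is then approximated by truncations $\rho\wedge M$ (normalised). This uses $L^1$-contraction and mass preservation of $H_t$, lower semicontinuity of the entropy, and again the $s=0$ projection identity to control $W_{2,d_X}(\sigma^M,\nu)\to W_{2,d_X}(\sigma,\nu)$. The mass preservation should follow from $\CD$-free arguments in \cite{AGS;C}, or from the Mosco limit applied to the constant function $1$.

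\emph{Main obstacle.} The hardest step is going back from the integrated inequality with $s=0$ to the differential $\EVI$ at all $t>0$. For this we restart the flow at $\rho_t$, which requires $W_{2,d_X}(\rho_t\mu_X,\nu)<\infty$ (guaranteed by (b)) and $\rho_t\mu_X\in D(\ENT_{\mu_X})$. We then differentiate at $r=0^+$ in $t\mapsto t+r$, using the right-continuity of the entropy along the flow. The other difficulty is the weak convergence in (b) for possibly unbounded densities. We would handle it using the uniform entropy bound $\Ent{\mu_X}{H^{(n)}_t\rho^{(n)}\mu_X}\le\Ent{\mu_X}{\sigma}$, which gives uniform integrability and hence weak $L^1$ convergence.
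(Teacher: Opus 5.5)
Your overall route is the paper's. You apply the time-integrated form of $\EVI(K_n,\infty)$ on $X_n$ only from time $0$, where Lemma~\ref{l:MI} gives $W_{2,d_{X_n}}(\sigma_n,\nu_n)\to W_{2,d_X}(\sigma,\nu)$ and $\Ent{\mu_{X_n}}{\nu_n}\le\Ent{\mu_X}{\nu}$. You treat the time-$t$ terms by lower semicontinuity through the lifted flows $H^{(n)}_t\rho^{(n)}$ (Lemma~\ref{l:M=M}, Corollary~\ref{c:SR}, Theorem~\ref{t:AES}). You then recover the differential inequality at every $t>0$ by restarting at $H_t\sigma$ via the semigroup property. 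This is exactly Lemma~\ref{l:Hn} together with \eqref{e:EVI-int-limit}.

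The step that does not work as written is your reduction to bounded densities by normalised truncation $\sigma^M=(\rho\wedge M)\mu_X/c_M$.

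\begin{itemize}
\item \emph{Wrong direction of semicontinuity.} To let $M\to\infty$ in the integrated inequality you need $\limsup_M W_{2,d_X}(\sigma^M,\nu)\le W_{2,d_X}(\sigma,\nu)$, because this term enters with a minus sign. That is an \emph{upper} semicontinuity, which Theorem~\ref{t:AES} does not give.
\item \emph{The cited identity does not apply.} The ``$s=0$ projection identity'' (Step~3 of Lemma~\ref{l:MI}) concerns the push-forwards $(\pi_n)_*$ into $X_n$, not truncations on $X$.
\item \emph{It can genuinely fail in extended spaces.} Suppose $X=A\sqcup B$ up to null sets, with $\mu_X(A),\mu_X(B)>0$, $d_X\equiv+\infty$ on $A\times B$, and $\rho$ essentially unbounded on $A$ but bounded and not a.e.\ zero on $B$. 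Then renormalising $\rho\wedge M$ strictly lowers the mass of $A$ for all large $M$. Taking $\nu=\sigma$, this gives $W_{2,d_X}(\sigma^M,\nu)=+\infty$, so $(\sigma^M,\nu)$ is not even an admissible pair for $\EVI$.
\end{itemize}

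The reduction is unnecessary, and you can drop it.
\begin{itemize}
\item Your uniform bound $\Ent{\mu_X}{(H^{(n)}_t\rho^{(n)})\mu_X}\le\Ent{\mu_X}{\sigma}$ gives weak $L^1$-compactness by Dunford--Pettis.
\item The limit is identified as $H_t\rho$ for any $\rho\in L^1$. Approximate $\rho$ in $L^1$ by bounded $g$ and apply Corollary~\ref{c:SR} to $g$ and $\mathbb E[g\mid\mathcal B_n(X)]$.
\item Then use the $L^1$-contractivity of $H^{(n)}_t$, $H_t$ and of the conditional expectations to control the error.
\end{itemize}
This is what Lemma~\ref{l:Hn} does. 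With it, your items (b) and (d) hold directly for every $\sigma\in D(\ENT_{\mu_X})$, and the rest of your argument goes through unchanged.
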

\begin{proof}
We divide the proof into several steps.
We suppose that $\sigma=\rho\cdot \mu_X \in D(\ENT_{\mu_X})$ and  $\nu \in D(\ENT_{\mu_X})$ satisfy $W_{2, d_X}(\sigma, \nu)<\infty$.
Let $\{H_t^n\}$ (resp.\ $\{H_t^{(n)}\}$) denote the heat semigroup  corresponding to $\Ch_{d_{X_n}, \mu_{X_n}}$ (resp.\ $\Ch_{d_{n}, \mu_X}$). 
We set $\sigma_n:=(\pi_n)_*\sigma$ with $d\sigma_n=\rho_n d\mu_{X_n}$ and $\nu_n :=(\pi_n)_* \nu$.
Let $d\sigma^{(n)}:=\rho_n\circ \pi_n d\mu_X$.
We define the heat flow $\{H_t^n\}$ (by the same symbol as the heat semigroup), i.e., the dual action of $\{H_t^n\}$ by $H_t^n\sigma_n=(H_t^n \rho_n)\cdot \mu_{X_n}$, and analogously for $H_t^{(n)}$. 

\begin{lem}\label{l:Hn}
If $\sigma, \nu \in D(\ENT_{\mu_X})$, then
$$\Ent{\mu_X}{H_t\sigma} \le \liminf_{n \to \infty}\Ent{\mu_{X_n}}{H_t^n\sigma_n}, \quad W_{d_X, 2}(H_t\sigma, \nu) \le \liminf_{n \to \infty}W_{d_{X_n}, 2}(H_t^{n}\sigma_n, \nu_n) \fstop$$
\end{lem}
\begin{proof}
We note that $H^n_t \sigma \in D(\ENT_{\mu_{X_n}})$ whenever $\sigma = \rho\cdot\mu_{X_n} \in D(\ENT_{\mu_{X_n}})$
by the fact that $H^n_t \sigma$ is a metric gradient flow of $\ENT_{\mu_{X_n}}$, e.g.~\cite[Proposition~2.22]{AGS;D},
or by simple applications of Jensen's inequality and the invariance $\mu_{X_n}(H^n_t \rho) = \mu_{X_n}(\rho)$:
\begin{align*}
\Ent{\mu_{X_n}}{H^n_t (\rho \cdot \mu_{X_n})}
&= \int_{X_n} (H^n_t \rho)\log H^n_t \rho \diff \mu_{X_n} \\
&\le \int_{X_n} H^n_t (\rho \log \rho) \diff \mu_{X_n} = \int_{X_n} \rho \log \rho \diff \mu_{X_n}<\infty  \fstop
\end{align*}
Therefore, recalling Lemma \ref{l:MI}, 
\begin{align*}
 \sup_{n \in \N}\Ent{\mu_{X}}{H_t^{(n)}\sigma^{(n)}} \le \sup_{n \in \N}\Ent{\mu_{X}}{\sigma^{(n)}}\le \Ent{\mu_X}{\sigma}<\infty \fstop
\end{align*}
Thus, by Danford--Pettis theorem, $\{H_t^{(n)}\sigma^{(n)}\}$ is tight 
and there exist a weakly converging (non-relabelled) subsequence and its limit $\sigma'$. By Corollary~\ref{c:SR} and the $L^1$-contraction property of $H^{(n)}_t$ and $H_t$, we have $\sigma'=H_t\sigma$. The lower semicontinuity of the entropy yields 
$$\Ent{\mu_X}{H_t\sigma} \le \liminf_{n \to \infty}\Ent{\mu_X}{H_t^{(n)}\sigma^{(n)}} = \liminf_{n \to \infty}\Ent{\mu_{X_n}}{H_t^n\sigma_n} \fstop$$
Furthermore, noting that $\nu^{(n)}$ converges weakly to $\nu$ and Theorem~\ref{t:AES}, we conclude 
$$\liminf_{n \to \infty}W_{d_{X_n}, 2}(H_t^{n}\sigma_n, \nu_n)= \liminf_{n \to \infty}W_{d_n, 2}(H_t^{(n)}\sigma^{(n)}, \nu^{(n)}) \ge W_{d_X, 2}(H_t\sigma, \nu) \fstop $$
\end{proof}

{\it Resuming the proof of Theorem \ref{t:EVI}.}
We note that the inverse limit $X$ is infinitesimally Hilbertian by the hypothesis that $X_n$ satisfies $\EVI(K_n, \infty)$, which implies that $X_n$ is infinitesimally Hilbertian for every $n$ (e.g., see \cite[Theorem~5.1]{AGS;D}), and by Corollary~\ref{c:IH}, this extends to the limit $X$.
We have that $H_t\sigma \in D(\operatorname{Ent}_{\mu_X})$ whenever $\sigma=\rho\cdot\mu_X \in D(\operatorname{Ent}_{\mu_X})$ by the same argument as in the proof of Lemma~\ref{l:Hn}.

For $r\in\mathbb R$ and $t>0$, set
\[
I_r(t):=
\begin{cases}
\dfrac{1-e^{-rt}}{r}, & r\ne 0,\\[2mm]
t, & r=0.
\end{cases}
\]
We first prove the following integrated form of the EVI on the limit space:
\begin{align}
\label{e:EVI-int-limit}
\frac{1}{2}W_{d_X,2}(H_t\sigma,\nu)^2
-\frac{1}{2}e^{-Kt}W_{d_X,2}(\sigma,\nu)^2
\le
I_K(t)\bigl(\operatorname{Ent}_{\mu_X}(\nu)-\operatorname{Ent}_{\mu_X}(H_t\sigma)\bigr).
\end{align}
Indeed, set
\[
F_n(s):=\frac{1}{2}W_{d_{X_n},2}(H_s^n\sigma_n,\nu_n)^2.
\]
The EVI inequality on $X_n$ gives, for a.e. $s>0$,
\[
F_n'(s)+K_nF_n(s)
\le
\operatorname{Ent}_{\mu_{X_n}}(\nu_n)-\operatorname{Ent}_{\mu_{X_n}}(H_s^n\sigma_n).
\]
Since the entropy is non-increasing along the heat flow,
\[
\operatorname{Ent}_{\mu_{X_n}}(H_s^n\sigma_n)
\ge
\operatorname{Ent}_{\mu_{X_n}}(H_t^n\sigma_n)
\qquad 0\le s\le t.
\]
Thus, for a.e. $s\in(0,t)$,
\[
F_n'(s)+K_nF_n(s)
\le
\operatorname{Ent}_{\mu_{X_n}}(\nu_n)-\operatorname{Ent}_{\mu_{X_n}}(H_t^n\sigma_n).
\]
Multiplying by $e^{K_ns}$ and integrating from $0$ to $t$, we obtain
\begin{align}
\label{e:EVI-int-n}
\frac{1}{2}W_{d_{X_n},2}(H_t^n\sigma_n,\nu_n)^2
-\frac{1}{2}e^{-K_nt}W_{d_{X_n},2}(\sigma_n,\nu_n)^2
\le
I_{K_n}(t)\bigl(\operatorname{Ent}_{\mu_{X_n}}(\nu_n)-\operatorname{Ent}_{\mu_{X_n}}(H_t^n\sigma_n)\bigr).
\end{align}
We pass to the limit in \eqref{e:EVI-int-n}. By Lemmas~\ref{l:MI} and~\ref{l:Hn},
\[
W_{d_X,2}(H_t\sigma,\nu)
\le
\liminf_{n\to\infty}W_{d_{X_n},2}(H_t^n\sigma_n,\nu_n),
\]
\[
W_{d_{X_n},2}(\sigma_n,\nu_n)\to W_{d_X,2}(\sigma,\nu),
\qquad
\operatorname{Ent}_{\mu_{X_n}}(\nu_n)\le \operatorname{Ent}_{\mu_X}(\nu),
\]
and
\[
\operatorname{Ent}_{\mu_X}(H_t\sigma)
\le
\liminf_{n\to\infty}\operatorname{Ent}_{\mu_{X_n}}(H_t^n\sigma_n).
\]
Since $K_n\to K$, we also have $e^{-K_nt}\to e^{-Kt}$ and $I_{K_n}(t)\to I_K(t)$. Taking the limit in \eqref{e:EVI-int-n}, we obtain \eqref{e:EVI-int-limit}. In particular,
\[
W_{d_X,2}(H_t\sigma,\nu)<\infty,
\qquad
\operatorname{Ent}_{\mu_X}(H_t\sigma)<\infty.
\]

We now recover the differential EVI. Rearranging \eqref{e:EVI-int-limit}, we get
\begin{align*}
&\frac{\frac{1}{2}W_{d_X,2}(H_t\sigma,\nu)^2-\frac{1}{2}W_{d_X,2}(\sigma,\nu)^2}{t}
+\frac{1-e^{-Kt}}{t}\frac{1}{2}W_{d_X,2}(\sigma,\nu)^2 \\
&\qquad\le
\frac{I_K(t)}{t}\bigl(\operatorname{Ent}_{\mu_X}(\nu)-\operatorname{Ent}_{\mu_X}(H_t\sigma)\bigr).
\end{align*}
Letting $t\downarrow0$, and using
\[
\frac{1-e^{-Kt}}{t}\to K,
\qquad
\frac{I_K(t)}{t}\to 1,
\qquad
\operatorname{Ent}_{\mu_X}(H_t\sigma)\to \operatorname{Ent}_{\mu_X}(\sigma),
\]
we obtain
\[
\left.\frac{1}{2}\frac{d^+}{dt}\right|_{t=0}
W_{d_X,2}(H_t\sigma,\nu)^2
+\frac{K}{2}W_{d_X,2}(\sigma,\nu)^2
\le
\operatorname{Ent}_{\mu_X}(\nu)-\operatorname{Ent}_{\mu_X}(\sigma).
\]
Here the convergence of the entropy follows from the lower semicontinuity of entropy, the
$L^1$-continuity $H_t\rho\to\rho$, and the monotonicity of the entropy along the heat flow.
Finally, applying the same argument to $H_s\sigma$ in place of $\sigma$ and using the semigroup
property, we obtain the EVI inequality at every time $s>0$. Hence $X$ satisfies
$\EVI(K,\infty)$. 
\end{proof}

\begin{cor}
Suppose the same assumption as in Theorem \ref{t:EVI}. 
Then, the heat flow $\{H_t\}_{t \ge 0}$ on the inverse limit $X$ coincides with the $W_{2, d_X}$-gradient flow of ${\rm Ent}_{X, \mu}$.  
\end{cor}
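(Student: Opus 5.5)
The plan is to use the general principle that, in a metric space, an $\EVI(K,\infty)$ flow is a $W_{2,d_X}$-gradient flow of the entropy. Indeed, $\EVI$ is the strongest formulation of the gradient-flow property. So the task reduces to showing that the EVI of Theorem~\ref{t:EVI} places the curve $t\mapsto H_t\sigma$ in that framework, and then invoking uniqueness.

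\textbf{Step 1: the heat flow has the $\EVI$ property.} Fix $\sigma=\rho\cdot\mu_X\in D(\ENT_{\mu_X})$ and set $\sigma_t:=(H_t\rho)\cdot\mu_X$. By Theorem~\ref{t:EVI}, the curve $(\sigma_t)_{t\ge0}$ satisfies $\EVI(K,\infty)$ against every test measure $\nu\in D(\ENT_{\mu_X})$ with $W_{2,d_X}(\sigma,\nu)<\infty$.

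\textbf{Step 2: restrict to a metric space.} Since $W_{2,d_X}$ is only an extended distance, I would work inside the $W_{2,d_X}$-finiteness component
\[
\mathcal P_\sigma:=\{\nu\in\P(X): W_{2,d_X}(\nu,\sigma)<\infty\},
\]
on which $W_{2,d_X}$ is a genuine distance. Two facts keep the flow in this component. First, \eqref{e:EVI-int-limit} gives $W_{2,d_X}(\sigma_t,\sigma)<\infty$ for each $t$. Second, $\ENT_{\mu_X}(\sigma_t)<\infty$ for each $t$, as shown in the proof of Theorem~\ref{t:EVI}.

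\textbf{Step 3: EVI implies the energy dissipation equality.} I would then apply the standard theory of Ambrosio--Gigli--Savar\'e for $\EVI$ curves in a metric space, namely the EVI$\Rightarrow$EDE implication in \cite{AGS;D}. It shows that $(\sigma_t)$ is locally absolutely continuous in $(\mathcal P_\sigma, W_{2,d_X})$ and satisfies the energy dissipation equality
\[
\ENT_{\mu_X}(\sigma_0)=\ENT_{\mu_X}(\sigma_t)+\frac12\int_0^t|\dot\sigma_s|^2\,ds+\frac12\int_0^t|D^-\ENT_{\mu_X}|^2(\sigma_s)\,ds,
\]
with the descending slope computed with respect to $W_{2,d_X}$. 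This equality is exactly the statement that $(\sigma_t)$ is a $W_{2,d_X}$-gradient flow of the entropy. The proof uses only the metric structure, not local compactness. The only point needing care is that the test measures $\nu$ in $\EVI$ range over $D(\ENT_{\mu_X})\cap\mathcal P_\sigma$, which is the full domain of the functional restricted to $\mathcal P_\sigma$.

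\textbf{Step 4: uniqueness.} Conversely, the $\EVI$ property implies uniqueness, and in fact $K$-contraction of any two $\EVI$ solutions: $W_{2,d_X}(\sigma_t,\sigma'_t)\le e^{-Kt}W_{2,d_X}(\sigma_0,\sigma'_0)$. Moreover, any curve satisfying the energy dissipation equality for the $K$-geodesically convex entropy must coincide with the $\EVI$ solution; convexity is provided by Theorem~\ref{t:PCD}, since the $X_n$ are $\CD(K,\infty)$. Hence the $W_{2,d_X}$-gradient flow of the entropy starting at $\sigma$ is unique and equals $(H_t\rho)\cdot\mu_X$.

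The main obstacle is this last uniqueness statement in the extended setting. Convexity is only available along some geodesic, as in Definition~\ref{def;CD}, rather than along generalized geodesics, so an arbitrary EDE curve need not obviously satisfy $\EVI$. I would first try to avoid the issue by defining the gradient flow as the $\EVI$ solution, which is the sense used in \cite{AGS;D}, and then appeal to the general fact that an $\EVI$ curve is unique and is also an EDE curve. Failing that, I would compare an arbitrary EDE curve with $(\sigma_t)$ using the $\EVI$ of $(\sigma_t)$ tested against points of that curve, which yields contraction directly.
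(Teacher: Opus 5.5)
Your Steps 1--3 are sound. Testing \eqref{e:EVI-int-limit} with $\nu=\sigma$ keeps $(H_t\sigma)_t$ inside $\mathcal P_\sigma$. The implication $\EVI\Rightarrow$ energy dissipation equality is purely metric. So you do obtain that the heat flow is a $W_{2,d_X}$-gradient flow of the entropy, which is a legitimate alternative to the existence half of \cite[Theorem~8.5]{AGS;C}.

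The gap is Step~4. The paper's proof only cites \cite[Theorems~8.5 and~9.3]{AGS;C}, which work with gradient flows in the energy-dissipation sense. There, ``coincides'' means uniqueness among EDE curves, and your mechanism does not give it.
\begin{itemize}
\item Geodesic $K$-convexity is not what gives uniqueness of EDE curves. A linear functional on $(\R^2,\|\cdot\|_\infty)$ is affine along straight segments, which are geodesics, so it is $0$-convex in the weak sense of Definition~\ref{def;CD}. Yet it has infinitely many curves of maximal slope from every point.
\item The direct comparison does not give contraction. If $\eta$ is merely an EDE curve from $\sigma$, the $\EVI$ of $(\sigma_t)$ controls only the derivative in the first slot. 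In the second slot you only have $|\dot\eta_s|\,W_{2,d_X}=|D^-\ENT_{\mu_X}|(\eta_s)\,W_{2,d_X}$. Combined with the slope inequality from $K$-convexity, this gives at best
\[
\frac{d}{dt}\,\frac12 W_{2,d_X}^2(\sigma_t,\eta_t)\le 2\,|D^-\ENT_{\mu_X}|(\eta_t)\,W_{2,d_X}(\sigma_t,\eta_t)-K\,W_{2,d_X}^2(\sigma_t,\eta_t).
\]
The term linear in $W_{2,d_X}$ prevents a Gronwall argument from forcing $W_{2,d_X}(\sigma_t,\eta_t)=0$.
\end{itemize}

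The missing idea is Gigli's uniqueness argument, which uses the affine rather than the geodesic structure of $\mathcal P(X)$. Take two EDE flows $\eta,\eta'$ from $\sigma$ and their mixture $\bar\eta_t:=\frac12(\eta_t+\eta'_t)$.
\begin{itemize}
\item Its squared metric speed is at most the average, by joint convexity of $W_{2,d_X}^2$.
\item Its squared descending slope is at most the average, by convexity of $|D^-\ENT_{\mu_X}|^2$ along mixtures in $\CD(K,\infty)$ spaces (for instance via its identification with the Fisher information).
\item Its entropy is strictly smaller unless $\eta_t=\eta'_t$.
\end{itemize}
Since $|D^-\ENT_{\mu_X}|$ is an upper gradient, every absolutely continuous curve satisfies
\[
\ENT_{\mu_X}(\sigma)-\ENT_{\mu_X}(\bar\eta_t)\le\frac12\int_0^t|\dot{\bar\eta}_s|^2\,ds+\frac12\int_0^t|D^-\ENT_{\mu_X}|^2(\bar\eta_s)\,ds,
\]
and the mixture would violate this. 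This is what \cite[Theorem~9.3]{AGS;C} supplies in the Polish extended setting.

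So the input you actually need is $\CD(K,\infty)$ for $X$. Note that the hypothesis is $\EVI(K_n,\infty)$, which gives $\CD(K_n,\infty)$ on each $X_n$ (e.g.\ via \cite{AGS;D}), not $\CD(K,\infty)$ as you wrote. The proof of Theorem~\ref{t:PCD} carries over verbatim to $K_n\to K$.

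If you instead read ``gradient flow'' as ``$\EVI$ solution'', the corollary is already Theorem~\ref{t:EVI} plus uniqueness of $\EVI$ solutions, and your Steps~2--3 are not needed.
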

\begin{proof}
This is a standard consequence of the EVI gradient flow property in~Theorem \ref{t:EVI}, see, e.g. ~\cite[Theorems~8.5 and 9.3]{AGS;C}.
\end{proof}


\begin{rem}
In extended metric measure spaces, it is not known whether $\RCD(K,\infty)$ implies $\EVI(K,\infty)$. Therefore, we proved  the stability of $\RCD$ and $\EVI$ separately. 
\end{rem}
\section{Invariance of the Cheeger energy under isomorphism} \label{sec:Inv}

Throughout this section, let
\(
X=(X,\tau_X,d_X,\mu_X)\) and \(Y=(Y,\tau_Y,d_Y,\mu_Y)
\)
be Polish extended mm-spaces, and assume that $X\cong Y$ in the sense of Definition~\ref{d:mmiso}.
Let $f:X\to Y$ and $X_0\subset X$ be a Borel map and a Borel set as in Definition~\ref{d:mmiso}, and set $Y_0:=f(X_0)$. We denote by $\mathcal B^*(X)$ the $\sigma$-algebra of universally measurable sets, i.e., $\mathcal B^*(X):=\cap_{\nu \in \mathcal P(X)} \mathcal B(X)^\nu$, where $\nu$ is a Borel probability in $X$ and $\mathcal B(X)^\nu$ is the completion of the Borel $\sigma$-algebra in terms of $\nu$.

\subsection*{1. Analytic $d$--closures and the isometric extension}

\begin{lem}\label{lem:closure}
Let $A\subset X$ be $\tau_X$-Borel and $r>0$. Set $A^{(r)}:=\{x:\ d_X(x,A)<r\}$ and
$\bar A:=\bigcap_{n\in\mathbb N}A^{(1/n)}$.
Then each $A^{(r)}$ is analytic, hence in $\Bu(X)$; therefore $\bar A$ is analytic and $\Bu(X)$--measurable.
Furthermore, $\mu_X(\bar A)=1$ if  $\mu_X(A)=1$.
\end{lem}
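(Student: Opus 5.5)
The plan is to write $A^{(r)}$ as a projection of a Borel subset of $X\times X$ and then invoke the standard fact that continuous images of Borel sets in Polish spaces are analytic. Concretely, we write
\[
A^{(r)}=\{x\in X:\ \exists\, a\in A \text{ with } d_X(x,a)<r\}=\mathrm{pr}_1\bigl(E_r\bigr),\qquad E_r:=\{(x,a)\in X\times A:\ d_X(x,a)<r\}.
\]
The second description holds because $d_X(x,A)<r$ if and only if some $a\in A$ has $d_X(x,a)<r$; the inequality is strict, so the infimum causes no trouble. By Definition~\ref{d:PEMS}\eqref{d:PEMS2}, $d_X$ is $\tau_X\times\tau_X$-lower semicontinuous. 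Hence $\{d_X\le s\}$ is closed for every $s$, and
\[
\{d_X<r\}=\bigcup_{m}\{d_X\le r-1/m\}
\]
is $F_\sigma$, in particular Borel in the Polish space $X\times X$. Therefore $E_r=\{d_X<r\}\cap(X\times A)$ is Borel. Its image under the continuous projection $\mathrm{pr}_1$ is analytic (Suslin), and analytic sets are universally measurable. This gives $A^{(r)}\in\Bu(X)$.

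Next, the family of analytic sets is closed under countable intersections. Indeed, if $A_n=\mathrm{pr}(B_n)$ with $B_n\subset X\times\N^\N$ Borel, then $\bigcap_n A_n$ is the projection of a Borel subset of $X\times(\N^{\N})^{\N}$, obtained by requiring that all first coordinates agree. So $\bar A=\bigcap_n A^{(1/n)}$ is analytic, and hence $\Bu(X)$-measurable. For the measure statement, note that $A\subset A^{(r)}$ for every $r$, since $d_X(a,A)=0<r$ for $a\in A$. Thus $A\subset\bar A$, and if $\mu_X(A)=1$ then the ($\mu_X$-completed) measure satisfies $\mu_X(\bar A)\ge\mu_X(A)=1$. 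Here $\mu_X$ is extended to $\Bu(X)$ by completion.

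The only point needing care is the Borel measurability of $\{d_X<r\}$, which follows from lower semicontinuity as above. Everything else is classical descriptive set theory: projections of Borel sets are analytic, analytic sets are universally measurable, and countable intersections of analytic sets are analytic. For these I would cite Srivastava~\cite{Sr}, as the paper already does in Proposition~\ref{p:S}.
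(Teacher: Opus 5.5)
Your proposal is correct and follows the paper's proof essentially step by step. Lower semicontinuity makes $\{d_X<r\}$ a countable union of closed sublevel sets, so $A^{(r)}$ is the projection of the Borel set $\{d_X<r\}\cap(X\times A)$ and hence analytic; analyticity is preserved under countable intersections, and $A\subset\bar A$ gives $\mu_X(\bar A)=1$.
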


\begin{proof}
Take $r_k\uparrow r$. By the $\tau_X\times\tau_X$ lower semicontinuity of $d_X$,
the sub-level set~$\{d_X\le r_k\}$ is closed in $X\times X$. Thus, the set~$\{d_X<r\}=\bigcup_k\{d_X\le r_k\}$ as well as 
$S:=\{(x,a):a\in A,\ x \in X,\ d_X(x,a)<r\}$ are Borel in $X\times X$ and the projection to the first coordinate $A^{(r)}=\mathrm{proj}_X(S)$ is analytic.
Note that the countable intersection over $n$ preserves the analyticity of $\bar A$. Since $A\subset\bar A$ and $\mu_X(A)=1$, we have $\mu_X(\bar A)=1$.
\end{proof}

\begin{lem}\label{lem:extend}
Let $\bar X_0$ be the $d_X$-closure of $X_0$ and let $\bar Y_0$ be the $d_Y$-closure of $Y_0=f(X_0)$.
Then $f$ extends uniquely to an isometry $\bar f:\bar X_0\to\bar Y_0$, and 
the graph $\mathrm{Graph}(\bar f)\subset X\times Y$ is analytic; in particular $\bar f$ is $\Bu$--measurable.
Moreover $\bar f(\bar X_0)=\bar Y_0$ (hence $\bar f$ is bijective between $\bar X_0$ and $\bar Y_0$).
Furthermore, for every analytic $E\subset \bar Y_0$,
\begin{equation}\label{eq:pushF}
\mu_Y(E)=\mu_X(\bar f^{-1}(E)),
\end{equation}
where $\bar f^{-1}(E)\in\Bu(X)$.
\end{lem}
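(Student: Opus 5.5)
The plan is to build $\bar f$ by completeness and then transfer the descriptive-set-theoretic properties through its graph.

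\emph{Construction of $\bar f$.} For $x\in\bar X_0$ choose $x_k\in X_0$ with $d_X(x_k,x)\to0$. Then $(x_k)$ is $d_X$-Cauchy. Since $f$ is distance preserving on $X_0$, the sequence $(f(x_k))$ is $d_Y$-Cauchy, and by completeness of $d_Y$ it has a limit. We set $\bar f(x)$ equal to this limit. Independence of the chosen sequence, the isometry property on $\bar X_0$, and uniqueness all follow from the triangle inequality, exactly as for ordinary metric completions; infinite distances cause no trouble since all points involved lie at finite distance from one another. The image $\bar f(\bar X_0)$ lies in $\bar Y_0$. Conversely, for $y\in\bar Y_0$, take $y_k=f(x_k)\to y$; then $(x_k)$ is $d_X$-Cauchy, its $d_X$-limit $x$ lies in $\bar X_0$, and $\bar f(x)=y$. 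This gives surjectivity, and injectivity comes from the isometry property.

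\emph{Analyticity of the graph.} This is the main obstacle. I would use the description
\[
\mathrm{Graph}(\bar f)=\bigcap_{m\in\N}\mathrm{proj}_{X\times Y}\Bigl\{(x,y,x')\in X\times Y\times X_0:\ d_X(x,x')<\tfrac1m,\ d_Y(y,f(x'))<\tfrac1m\Bigr\}.
\]
The inner set is Borel in $X\times Y\times X$. Indeed, $d_X$ and $d_Y$ are lower semicontinuous, so their strict sublevel sets are $F_\sigma$, as in Lemma~\ref{lem:closure}; $f$ is Borel and $X_0$ is Borel. Projections of Borel sets are analytic, and countable intersections of analytic sets are analytic, so the right-hand side is analytic. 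It remains to check the set equality. The inclusion ``$\subset$'' is immediate. For ``$\supset$'', given witnesses $x'_m$ with $d_X(x,x'_m)<1/m$, the point $x$ lies in $\bar X_0$, and $f(x'_m)\to\bar f(x)$ by construction; since also $f(x'_m)\to y$, we get $y=\bar f(x)$. Once the graph is analytic, $\bar f$ is $\Bu$-measurable: for Borel $B\subset Y$, the preimage $\bar f^{-1}(B)=\mathrm{proj}_X(\mathrm{Graph}\cap (X\times B))$ is analytic, hence universally measurable. The same projection argument applied to an analytic $E\subset\bar Y_0$ shows that $\bar f^{-1}(E)$ is analytic. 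Note also that $\bar X_0$ and $\bar Y_0$ are analytic by Lemma~\ref{lem:closure}.

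\emph{The push-forward identity \eqref{eq:pushF}.} Fix an analytic $E\subset\bar Y_0$. Since $\mu_X(X_0)=1$, we have
\[
\mu_X(\bar f^{-1}(E))=\mu_X(\bar f^{-1}(E)\cap X_0)=\mu_X(f^{-1}(E)\cap X_0).
\]
By universal measurability, $E$ differs from a Borel set $B$ by a $\mu_Y$-null set contained in a Borel null set $N$. Using $f_*\mu_X=\mu_Y$, we get $\mu_X(f^{-1}(N))=0$, so
\[
\mu_X(f^{-1}(E)\cap X_0)=\mu_X(f^{-1}(B))=\mu_Y(B)=\mu_Y(E).
\]
The delicate points are the measurability bookkeeping with completions and the equality of graph descriptions; I expect the graph's analyticity to be the crux.
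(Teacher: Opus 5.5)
Your proposal is correct and follows essentially the paper's route: you extend $f$ by completeness, obtain analyticity of the graph by projecting a Borel set built from approximating points of $X_0$, derive surjectivity from completeness of $d_X$, and get \eqref{eq:pushF} from $\mu_X(X\setminus X_0)=0$ and $f_*\mu_X=\mu_Y$; for the graph you intersect countably many projections from $X\times Y\times X$ where the paper makes a single projection from $X\times Y\times X_0^{\mathbb N}$, and your Borel-sandwich argument and projection argument for $\bar f^{-1}(E)\in\Bu(X)$ spell out details the paper leaves implicit. One small imprecision: two points of $\bar X_0$ may be at infinite distance from each other, but the isometry identity still passes to the limit, because $d_X(x,x')=\infty$ forces $d_X(x_k,x'_k)=\infty$ for all large $k$.
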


\begin{proof}
\emph{(Extension and isometry.)} For $x\in\bar X_0$, we choose $x_n\in X_0$ such that $d_X(x_n,x)\to0$.
Then $d_Y(f(x_n),f(x_m))=d_X(x_n,x_m)\le d_X(x_n,x)+d_X(x,x_m)\to0$, so $(f(x_n))_{n \ge 1}$ is Cauchy and converges in $(Y,d_Y)$ as it is complete.
Define $\bar f(x)$ as this limit. By standard arguments, the limit is  independent of the approximating sequence. Thus, we obtain the  distance preservation.

\emph{(Analytic graph.)} Let $\mathcal A:=X_0^{\mathbb N}$ (countable product with the product $\sigma$--algebra). For $n\in\mathbb N$ define
\[
E_n:=\{(x,y,(x_k)_{k\in\mathbb N})\in X\times Y\times \mathcal A:\ d_X(x,x_n)<1/n,\ d_Y(y,f(x_n))<1/n\}.
\]
Each $E_n$ is Borel  because $f$ is Borel on $X_0$ and $\{d_{X_n}<1/n\}$ is Borel in the product topology by the proof of Lemma~\ref{lem:closure}.
Thus $E:=\bigcap_{n\in\mathbb N}E_n$ is Borel in $X\times Y\times\mathcal A$. The projection $P:=\mathrm{proj}_{X\times Y}(E)$ is analytic.
It is easy to see that $(x,y)\in P$ if and only if $x\in\bar X_0$ and $y=\bar f(x)$, hence we have $P=\mathrm{Graph}(\bar f)$.

\emph{(Bijectivity onto $\bar Y_0$.)} The set $\bar X_0$ is $d_X$-closed in the complete space $(X,d_X)$, hence it is complete.
An isometry maps complete sets to complete sets, so $\bar f(\bar X_0)$ is complete in $(Y,d_Y)$ and therefore $d_Y$-closed in $Y$.
Since $\bar f(\bar X_0)$ contains $Y_0=f(X_0)$, it contains the $d_Y$-closure $\bar Y_0$.
Conversely, by construction $\bar f(\bar X_0)\subset \bar Y_0$, hence $\bar f(\bar X_0)=\bar Y_0$.

\emph{(Measure preservation.)} Since $\mu_X(\bar X_0\setminus X_0)=0$ by Lemma~\ref{lem:closure} and $\bar f=f$ on $X_0$,
\[
\mu_X(\bar f^{-1}(E))=\mu_X(X_0\cap \bar f^{-1}(E))=\mu_X(f^{-1}(E))=\mu_Y(E). \qedhere
\]
\end{proof}

%

\begin{lem}\label{lem:inverse}
Let $\bar g:=\bar f^{-1}:\bar Y_0\to\bar X_0$. Then $\bar g$ is an isometry and is $\Bu$-measurable.
Moreover, for every $\tau_X$-Borel set $E\subset X$,
\begin{equation}\label{eq:pushG}
\mu_X(E)=\mu_Y(\bar g^{-1}(E)),
\end{equation}
where $\bar g^{-1}(E)=\bar f(E\cap\bar X_0)$ is $\Bu(Y)$-measurable.
\end{lem}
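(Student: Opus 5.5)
The statement has three parts: $\bar g$ is an isometry, $\bar g$ is $\Bu$-measurable, and $\bar g$ pushes $\mu_Y$ forward to $\mu_X$ on $\tau_X$-Borel sets. All three will be read off from Lemma~\ref{lem:extend}, applied to $\bar f$ and its graph. By Lemma~\ref{lem:extend}, $\bar f:\bar X_0\to\bar Y_0$ is a distance-preserving bijection, so $\bar g=\bar f^{-1}$ is well defined. Its isometry property is immediate: for $y=\bar f(x)$ and $y'=\bar f(x')$ we have $d_X(\bar g(y),\bar g(y'))=d_X(x,x')=d_Y(y,y')$.

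\emph{Measurability.} I would use the graph of $\bar g$. It is the image of $\mathrm{Graph}(\bar f)\subset X\times Y$ under the coordinate swap $X\times Y\to Y\times X$. This swap is a homeomorphism, so the image is again analytic. Now take a $\tau_X$-Borel set $E\subset X$. Then
\[
\bar g^{-1}(E)=\bar f(E\cap\bar X_0)=\mathrm{proj}_Y\bigl(\mathrm{Graph}(\bar f)\cap (E\times Y)\bigr).
\]
Intersecting an analytic set with a Borel set gives an analytic set, and continuous projections of analytic sets are analytic. Hence $\bar g^{-1}(E)$ is analytic, and analytic sets are universally measurable, so $\bar g^{-1}(E)\in\Bu(Y)$. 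Since this holds for every Borel $E$, the map $\bar g$ is $\Bu$-measurable.

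\emph{Measure identity.} Fix a $\tau_X$-Borel set $E$ and put $F:=\bar g^{-1}(E)=\bar f(E\cap \bar X_0)$. Then $F$ is an analytic subset of $\bar Y_0$. Because $\bar f$ is a bijection onto $\bar Y_0$, we get $\bar f^{-1}(F)=E\cap\bar X_0$. Applying \eqref{eq:pushF} to $F$ gives
\[
\mu_Y(F)=\mu_X(E\cap\bar X_0).
\]
By Lemma~\ref{lem:closure}, $\mu_X(\bar X_0)=1$ (here $\mu_X$ means its completion on $\Bu(X)$). Therefore $\mu_X(E\cap\bar X_0)=\mu_X(E)$, which is \eqref{eq:pushG}.

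\emph{Main obstacle.} The delicate step is the use of \eqref{eq:pushF} on the analytic set $F$, which is not Borel. Its justification in Lemma~\ref{lem:extend} relies on $f^{-1}(F)$ being $\mu_X$-measurable and on $f_*\mu_X=\mu_Y$ extending to completions. That extension holds because $f$ is Borel, and Borel maps are measurable between universally measurable $\sigma$-algebras. Hence $\mu_Y(F)=\mu_X(f^{-1}(F))$ holds for the completed measures. I would state this completion remark explicitly rather than leave it implicit.
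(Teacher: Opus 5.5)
Your proposal is correct and follows essentially the same route as the paper. The paper gets the isometry property from $\bar f$ being a distance-preserving bijection, gets $\Bu$-measurability from the analyticity of the flipped graph of $\bar f$, and gets \eqref{eq:pushG} by applying \eqref{eq:pushF} to the analytic set $\bar f(E\cap\bar X_0)$, using injectivity of $\bar f$ and $\mu_X(\bar X_0)=1$. Your description of $\bar g^{-1}(E)$ as $\mathrm{proj}_Y\bigl(\mathrm{Graph}(\bar f)\cap(E\times Y)\bigr)$ and your remark on completions spell out what the paper states in one line.
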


\begin{proof}
We note that $\bar g$ is an isometry since it is the inverse of an isometry. As its graph is the coordinate flip of $\mathrm{Graph}(\bar f)$, it is analytic and, in particular,  $\bar g$ is $\Bu$-measurable.
For a Borel set~$E$ in $X$, $\bar g^{-1}(E)=\bar f(E\cap\bar X_0)$ is analytic.
Using \eqref{eq:pushF} with the universally measurable set $\bar f(E\cap\bar X_0)$ (and completion of $\mu_Y$),
\[
\mu_Y(\bar g^{-1}(E))=\mu_Y(\bar f(E\cap\bar X_0))=\mu_X(\bar f^{-1}(\bar f(E\cap\bar X_0)))=\mu_X(E\cap\bar X_0)=\mu_X(E)
\]
where we used $\bar f$ is injective on $\bar X_0$ and $\mu_X(X\setminus\bar X_0)=0$ by Lemma~\ref{lem:closure}.
\end{proof}

\subsection{Push-forward of test plans}
Let $\mathcal T_X$ be the family of probability measures $\pi$ on the space of continuous path $C\bigl([0,1]; (X, \tau_X)\bigl)$ in $(X, \tau_X)$ such that $\pi$ is concentrated on the space of absolutely continuous curves with $ \int_0^1 |\dot \gamma_t|^2 dt <\infty$ (we denote by $\AC([0,1]; (X, d_X))$ for the space of such curves) and satisfies 
$$\int \int_0^1 |\dot \gamma_t|^2 dt d\pi(\gamma)<\infty , \qquad (e_t)_*\pi \le C \mu_X \quad t \in [0,1] \quad \text{some $C>0$ independent of $t$},$$
where $e_t: C\bigl([0,1]; (X, \tau_X)\bigl) \to X$ is the evaluation map $\gamma \mapsto \gamma_t \in X$. We call $\pi \in \mathcal T_X$ {\it test plan} and $C$ {\it  compression constant}.
\begin{lem}\label{lem:pi-full}
Let $\pi\in\mathcal T_X$ and set $S_{X_0}:=\bigcap_{q\in\mathbb Q\cap[0,1]} e_q^{-1}(X_0)$.
Then $S_{X_0}$ is Borel in $C([0,1];X)$ and $\pi(S_{X_0})=1$.
Moreover, for every $\gamma\in S_{X_0}\cap\AC([0,1];(X,d_X))$, it holds~$\gamma([0,1])\subset\bar X_0$.
\end{lem}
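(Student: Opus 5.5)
The plan splits the statement into three claims: Borel measurability of $S_{X_0}$, the identity $\pi(S_{X_0})=1$, and the inclusion $\gamma([0,1])\subset\bar X_0$ for admissible curves.

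\emph{Measurability.} For each fixed $q$, the evaluation map $e_q:C([0,1];(X,\tau_X))\to X$ is continuous for the compact-open (uniform) topology. Hence $e_q^{-1}(X_0)$ is Borel, because $X_0$ is $\tau_X$-Borel. A countable intersection of Borel sets is Borel, so $S_{X_0}$ is Borel.

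\emph{Full measure.} I will use the compression bound $(e_q)_*\pi\le C\mu_X$, which gives
\[
\pi\bigl(e_q^{-1}(X\setminus X_0)\bigr)=(e_q)_*\pi(X\setminus X_0)\le C\mu_X(X\setminus X_0)=0 .
\]
Summing over the countably many rationals $q\in\mathbb Q\cap[0,1]$ shows $\pi(C([0,1];X)\setminus S_{X_0})=0$, that is, $\pi(S_{X_0})=1$.

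\emph{Curve inclusion.} Take $\gamma\in S_{X_0}\cap\AC([0,1];(X,d_X))$ and any $t\in[0,1]$. Choose rationals $q_k\to t$. Absolute continuity with respect to $d_X$ means
\[
d_X(\gamma_{q_k},\gamma_t)\le\int_{q_k\wedge t}^{q_k\vee t}|\dot\gamma_s|\,ds,
\]
and this bound is finite and tends to $0$ because $|\dot\gamma|\in L^2\subset L^1$. So $\gamma_t$ is a $d_X$-limit of the points $\gamma_{q_k}\in X_0$. Therefore $\gamma_t$ lies in the $d_X$-closure of $X_0$, which is exactly $\bar X_0$ in the sense of Lemma~\ref{lem:extend}, i.e.\ $\bigcap_n X_0^{(1/n)}$ from Lemma~\ref{lem:closure}.

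The only point needing care is that $\AC$ must be understood with respect to the extended distance $d_X$ rather than $\tau_X$, so that the bound on $d_X(\gamma_s,\gamma_t)$ is available. This is how $\mathcal T_X$ was defined, so the step is direct. Beyond that I expect no real obstacle: the argument uses only the continuity of evaluation maps and the compression constant.
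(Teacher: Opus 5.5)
Your proof is correct and follows the paper's argument essentially step by step. Both use continuity of $e_q$ for Borel measurability, the compression bound with countable additivity over rational times for $\pi(S_{X_0})=1$, and $d_X$-continuity of absolutely continuous curves plus density of the rationals and $d_X$-closedness of $\bar X_0$ for the inclusion; your estimate $d_X(\gamma_{q_k},\gamma_t)\le\int|\dot\gamma_s|\,ds$ simply makes explicit the $d_X$-continuity that the paper invokes without detail.
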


\begin{proof}
Since $e_q$ is continuous, $S_{X_0}$ is Borel. Noting that  $(e_q)_*\pi\ll\mu_X$ and $\mu_X(X_0)=1$, $\pi(e_q^{-1}(X_0))=1$ for each rational number~$q$, we have $\pi(S_{X_0})=1$. If $\gamma\in\AC$, it is $d_X$-continuous since $\gamma_q\in X_0\subset\bar X_0$ for all rationals and $\bar X_0$ is $d_X$-closed, thus $\gamma_t\in\bar X_0$ for all $t$.
\end{proof}

For $\gamma\in\AC([0,1];(X,d_X))$, the metric derivative is defined as 
\begin{align} \label{e:MDD}
|\dot \gamma _t|:=\lim_{h \to 0} \frac{d_X(\gamma_{h+t}, \gamma_t)}{|h|} \quad \text{a.e.~$t$}.
\end{align}
\begin{lem}\label{lem:speed}
If $\gamma\in\AC([0,1];(X,d_X))$ takes values in $\bar X_0$, then $\eta:=\bar f\circ\gamma\in\AC([0,1];(Y,d_Y))$ and
$|\dot\eta_t|=|\dot\gamma_t|$ for a.e.\ $t$. The analogous statement holds for $\bar g$.
\end{lem}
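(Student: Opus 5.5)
The statement is essentially a consequence of the fact that $\bar f$ is a $d$-isometry on $\bar X_0$ (Lemma~\ref{lem:extend}), and that absolute continuity and metric speed are defined purely in terms of the distance. I will check each part of the definition of $\AC([0,1];(Y,d_Y))$ for $\eta$ directly from the corresponding property of $\gamma$. There are two steps: first the metric properties, then continuity with respect to $\tau_Y$, since curves in test plans also have to live in $C([0,1];(Y,\tau_Y))$.

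\emph{Metric part.} Every $\gamma_t$ lies in $\bar X_0$. Lemma~\ref{lem:extend} therefore gives
\[
d_Y(\eta_s,\eta_t)=d_Y(\bar f(\gamma_s),\bar f(\gamma_t))=d_X(\gamma_s,\gamma_t)\qquad\forall s,t\in[0,1].
\]
Because $\gamma$ is absolutely continuous, there is $g\in L^2(0,1)$ with $d_X(\gamma_s,\gamma_t)\le\int_s^t g\,dr$. The same $g$ then controls $d_Y(\eta_s,\eta_t)$, so $\eta$ is $d_Y$-absolutely continuous with square-integrable upper gradient. The difference quotients $d_Y(\eta_{t+h},\eta_t)/|h|$ and $d_X(\gamma_{t+h},\gamma_t)/|h|$ are identical for every $t$ and $h$. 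Hence the limit in \eqref{e:MDD} exists for $\eta$ exactly where it exists for $\gamma$, namely for a.e.\ $t$, and $|\dot\eta_t|=|\dot\gamma_t|$ there. In particular $\int_0^1|\dot\eta_t|^2dt=\int_0^1|\dot\gamma_t|^2dt<\infty$.

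\emph{Topological part.} The point to watch is that $\eta$ must be $\tau_Y$-continuous, not only $d_Y$-continuous. This follows from property~\eqref{d:PEMS3} of Definition~\ref{d:PEMS}: $\tau_{d_Y}$ is finer than $\tau_Y$. Since $\eta$ is $d_Y$-continuous (indeed $d_Y$-uniformly continuous by the previous step), it is also $\tau_Y$-continuous. One should also note that $\eta$ is well defined: $\bar f$ is only defined on $\bar X_0$, and the hypothesis that $\gamma$ takes values in $\bar X_0$ is exactly what is needed.

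\emph{The case of $\bar g$.} By Lemma~\ref{lem:inverse}, $\bar g$ is an isometry from $\bar Y_0$ onto $\bar X_0$, and $\bar Y_0$ is $d_Y$-closed. The argument above applies verbatim with the roles of $X$ and $Y$ exchanged.

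I do not expect a genuine obstacle. The only subtle point is the $\tau$-continuity, which is handled by property~\eqref{d:PEMS3} as above. Measurability of $\bar f$ plays no role for a single curve; it becomes relevant only later, when push-forwards of test plans are formed.
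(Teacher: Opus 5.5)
Your proposal is correct and follows essentially the same route as the paper: the isometry property of $\bar f$ on $\bar X_0$ (and of $\bar g$ on $\bar Y_0$) gives $d_Y(\eta_s,\eta_t)=d_X(\gamma_s,\gamma_t)$ for all $s,t$. Absolute continuity and the equality of metric derivatives via \eqref{e:MDD} then follow at once, and your extra remark that $\eta$ is $\tau_Y$-continuous because $\tau_{d_Y}$ is finer than $\tau_Y$ is a detail the paper leaves implicit.
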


\begin{proof}
For all $s,t$, $d_Y(\eta_s,\eta_t)=d_X(\gamma_s,\gamma_t)$, so $\eta$ is absolutely continuous if so is $\gamma$.
The metric derivative identity follows from~\eqref{e:MDD} and the isometry of $\bar f$.
The same argument applies to $\bar g$.
\end{proof}

\begin{lem}\label{lem:Phi}
Fix any curve $\bar\eta\in C([0,1];Y)$ and define $\Phi:C([0,1];X)\to C([0,1];Y)$ by
\[
\Phi(\gamma):=
\begin{cases}
\bar f\circ\gamma, & \gamma\in S_{X_0}\cap \AC([0,1];(X,d_X)),\\
\bar\eta, & \text{otherwise}.
\end{cases}
\]
Then $\Phi$ is Borel measurable and for every $\pi\in\mathcal T_X$ one has $\Phi(\gamma)=\bar f\circ\gamma$ for $\pi$-a.e.\ $\gamma$.
\end{lem}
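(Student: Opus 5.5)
The proof has two parts: measurability of $\Phi$, and the a.e.\ identity for test plans. For the latter, observe that $\Phi(\gamma)=\bar f\circ\gamma$ on $S_{X_0}\cap\AC([0,1];(X,d_X))$ by definition. Lemma~\ref{lem:pi-full} gives $\pi(S_{X_0})=1$. By the definition of $\mathcal T_X$, $\pi$ is concentrated on $\AC([0,1];(X,d_X))$. Hence the identity holds $\pi$-a.e.

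One point needs checking here: that $\bar f\circ\gamma$ is $\tau_Y$-continuous, so that $\Phi$ really maps into $C([0,1];Y)$. By Lemma~\ref{lem:pi-full}, $\gamma$ takes values in $\bar X_0$. By Lemma~\ref{lem:speed}, $\bar f\circ\gamma$ is $d_Y$-absolutely continuous, hence $d_Y$-continuous. Since $\tau_{d_Y}$ is finer than $\tau_Y$ (property~\eqref{d:PEMS3}), it is also $\tau_Y$-continuous.

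For measurability, I would first show that $\AC([0,1];(X,d_X))$, more precisely the set $\{\gamma:\int_0^1|\dot\gamma_t|^2dt<\infty\}$, is Borel in $C([0,1];(X,\tau_X))$. The plan is to express the $2$-energy as a supremum over rational partitions $0=t_0<\dots<t_k=1$ of $\sum_i d_X(\gamma_{t_{i}},\gamma_{t_{i-1}})^2/(t_i-t_{i-1})$. Each summand is a lower semicontinuous function of $\gamma$, since $d_X$ is $\tau_X\times\tau_X$-l.s.c.\ and evaluations are continuous. Restricting to rational partitions loses nothing, because $d_X$-continuity together with lower semicontinuity lets one approximate arbitrary partitions from rational ones; this is the standard argument of \cite{AGS;C}. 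The energy is therefore Borel, and so is its finiteness set. Intersecting with the Borel set $S_{X_0}$ gives a Borel domain $D$.

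It remains to see that $\gamma\mapsto\bar f\circ\gamma$ is measurable on $D$. The Borel $\sigma$-algebra of $C([0,1];Y)$ is generated by the evaluations $e_q$, $q\in\mathbb Q\cap[0,1]$, so it suffices that $\gamma\mapsto\bar f(\gamma_q)=\bar f\circ e_q(\gamma)$ is measurable on $D$. On $D$ we have $\gamma_q\in X_0$, so $\bar f(\gamma_q)=f(\gamma_q)$, which is a composition of Borel maps. Hence $\Phi$ is Borel.

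The expected main obstacle is the Borel measurability of the absolutely continuous set, with the supremum over rational partitions being the delicate point. If that step resists, I would weaken the claim to $\Phi$ being $\mathcal B^*$-measurable. The reason is that the set is at least analytic, or coanalytic via the l.s.c.\ energy, and universal measurability already suffices for pushing forward the measure $\pi$.
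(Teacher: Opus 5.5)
Your proposal is correct and follows the same route as the paper's proof. The paper shows the finite-energy set is Borel, reduces to rational evaluations where $\bar f(\gamma_q)=f(\gamma_q)$, and uses $\pi(S_{X_0}\cap\AC)=1$; you additionally fill in two points it leaves implicit, namely why the energy is Borel and why $\bar f\circ\gamma$ is $\tau_Y$-continuous. Two minor remarks: the reduction to rational partitions needs only the joint lower semicontinuity of $d_X$ and the $\tau_X$-continuity of $\gamma$, not $d_X$-continuity; and since a supremum of l.s.c.\ functions is l.s.c., the set $\{E<\infty\}$ is $F_\sigma$, so your $\mathcal B^*$ fallback is never needed.
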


\begin{proof}
The set $\AC=\AC([0,1];(X,d_X))$ is Borel in $C([0,1];X)$ because it is equal to~$\{\gamma:\int_0^1|\dot\gamma_t|^2dt<\infty\}$ and the $2$-energy is Borel.
Hence $S_{X_0}\cap\AC$ is Borel. For each rational number~$q$, on $S_{X_0}\cap\AC$, we have $e_q(\Phi(\gamma))=\bar f(\gamma_q)=f(\gamma_q)$ since $\gamma_q\in X_0$.
Thus $e_q\circ\Phi$ is Borel, and because the Borel $\sigma$--algebra of $C([0,1];Y)$ is generated by $\{e_q\}_{q\in\mathbb Q\cap[0,1]}$,
$\Phi$ is Borel. Finally, $\pi(S_{X_0}\cap\AC)=1$ for $\pi\in\mathcal T_X$, so the $\pi$-a.e.\ identity follows.
\end{proof}

\begin{prop}\label{prop:pushF}
If $\pi\in\mathcal T_X$ has compression constant $C$ and $\pi^f:=\Phi_*\pi$, then $\pi^f\in\mathcal T_Y$ with the same constant.
\end{prop}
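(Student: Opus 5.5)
The plan is to verify the defining properties of $\mathcal T_Y$ for $\pi^f=\Phi_*\pi$ one at a time, using Lemmas~\ref{lem:pi-full}, \ref{lem:speed} and~\ref{lem:Phi} to reduce everything to statements about $\pi$-a.e.\ curve $\gamma\in S_{X_0}\cap\AC([0,1];(X,d_X))$. For such curves $\Phi(\gamma)=\bar f\circ\gamma$.

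\textbf{Step 1: concentration and finite energy.} By Lemma~\ref{lem:Phi}, $\Phi$ is Borel, so $\pi^f$ is a Borel probability measure on $C([0,1];Y)$. For $\pi$-a.e.\ $\gamma$, Lemma~\ref{lem:pi-full} gives $\gamma([0,1])\subset\bar X_0$. Lemma~\ref{lem:speed} then gives $\bar f\circ\gamma\in\AC([0,1];(Y,d_Y))$ with $|\dot{(\bar f\circ\gamma)}_t|=|\dot\gamma_t|$ for a.e.\ $t$.

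One point needs checking: $\bar f\circ\gamma$ must be $\tau_Y$-continuous. It is $d_Y$-continuous, and $\tau_{d_Y}$ is finer than $\tau_Y$, so this holds; hence $\Phi$ genuinely lands in $C([0,1];Y)$.

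Consequently $\pi^f$ is concentrated on $\AC([0,1];(Y,d_Y))$. By change of variables,
\[
\int\!\int_0^1|\dot\eta_t|^2\,dt\,d\pi^f(\eta)=\int\!\int_0^1|\dot\gamma_t|^2\,dt\,d\pi(\gamma)<\infty .
\]

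\textbf{Step 2: compression.} Fix $t\in[0,1]$ and a Borel set $B\subset Y$. We have $(e_t)_*\pi^f(B)=\pi(\{\gamma:\Phi(\gamma)_t\in B\})$, and for $\pi$-a.e.\ $\gamma$ this equals $\pi(\{\gamma:\bar f(\gamma_t)\in B\})$. Since $\gamma_t\in\bar X_0$ a.e., the latter is at most $(e_t)_*\pi\bigl(\bar f^{-1}(B\cap\bar Y_0)\bigr)$, where $\bar f^{-1}(B\cap\bar Y_0)\in\Bu(X)$ by Lemma~\ref{lem:extend}.

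The bound $(e_t)_*\pi\le C\mu_X$ extends from Borel sets to universally measurable sets, because it passes through completions. Therefore
\[
(e_t)_*\pi^f(B)\le C\mu_X\bigl(\bar f^{-1}(B\cap\bar Y_0)\bigr)=C\mu_Y(B\cap\bar Y_0)\le C\mu_Y(B),
\]
where the middle equality is \eqref{eq:pushF} applied to the analytic set $E=B\cap\bar Y_0$. Note that $\bar Y_0$ is analytic: it is the image of the analytic graph of $\bar f$ under a projection, or alternatively one can invoke Lemma~\ref{lem:closure}. This yields the same compression constant $C$.

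\textbf{Main obstacle.} The delicate step is this measurability bookkeeping. The map $\bar f$ is only $\Bu$-measurable, and $e_t\circ\Phi$ coincides with $\bar f\circ e_t$ only $\pi$-almost everywhere, and only at non-rational times is $\bar f$ (rather than $f$) actually needed. So one must argue carefully: the set $\{\gamma:\Phi(\gamma)_t\in B\}$ is Borel, and it agrees up to a $\pi$-null set with $e_t^{-1}(\bar f^{-1}(B))$, a set measurable for the $\pi$-completion. One then evaluates both sides with the completed measures. If this approach to \eqref{eq:pushF} proves awkward, the fallback is to approximate $B$ from inside by compact sets and use inner regularity of $\mu_Y$.
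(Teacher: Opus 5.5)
Your proposal is correct and follows the same route as the paper. Concentration on $\AC([0,1];(Y,d_Y))$ and finite energy come from Lemmas~\ref{lem:Phi} and~\ref{lem:speed}; the compression bound then follows from $\bar f^{-1}$, the extension of $(e_t)_*\pi\le C\mu_X$ to $\Bu(X)$, and \eqref{eq:pushF}. Your extra checks are correct refinements of details the paper glosses over: the $\tau_Y$-continuity of $\bar f\circ\gamma$, and applying \eqref{eq:pushF} only to the analytic set $B\cap\bar Y_0\subset\bar Y_0$.

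The measurability bookkeeping you single out as the main obstacle can in fact be bypassed. For each fixed $t$, the compression bound itself gives $\gamma_t\in X_0$ for $\pi$-a.e.\ $\gamma$, so $(e_t)_*\pi^f(B)=(e_t)_*\pi\bigl(X_0\cap f^{-1}(B)\bigr)\le C\mu_X\bigl(f^{-1}(B)\bigr)=C\mu_Y(B)$, and this uses only Borel sets.
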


\begin{proof}
By Lemma~\ref{lem:Phi} and Lemma~\ref{lem:speed}, $\pi^f$ is concentrated on $\AC([0,1];(Y,d_Y))$ and $\int \int_0^1 |\dot \eta_t|^2 dt d\pi^f(\eta)<\infty$.
For Borel $E\subset Y$ and any $t\in[0,1]$,
\[
(e_t)_*\pi^f(E)=\pi(\gamma_t\in \bar f^{-1}(E))=(e_t)_*\pi(\bar f^{-1}(E))\le C\,\mu_X(\bar f^{-1}(E))=C\,\mu_Y(E),
\]
where the inequality extends to $\Bu(X)$ and the last identity follows by~\eqref{eq:pushF}.
\end{proof}

\begin{prop}\label{prop:pushG}
Fix any curve $\bar\gamma\in C([0,1];X)$ and define $\Psi:C([0,1];Y)\to C([0,1];X)$ by
\[
\Psi(\eta):=
\begin{cases}
\bar g\circ\eta, & \eta\in S_{Y_0}\cap \AC((0,1);(Y,d_Y)),\\
\bar\gamma, & \text{otherwise}.
\end{cases}
\]
Then $\Psi$ is Borel measurable and for every $\sigma\in\mathcal T_Y$ one has $\Psi(\eta)=\bar g\circ\eta$ for $\sigma$-a.e.\ $\eta$. Furthermore, 
$\sigma^g:=\Psi_*\sigma$ is a test plan in $\mathcal T_X$ with the same compression constant as $\sigma$.
\end{prop}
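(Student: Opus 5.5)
The proof mirrors Lemma~\ref{lem:Phi} and Proposition~\ref{prop:pushF}, with the roles of $X$ and $Y$ exchanged. The inverse isometry $\bar g=\bar f^{-1}:\bar Y_0\to\bar X_0$ from Lemma~\ref{lem:inverse} takes the place of $\bar f$. Here $S_{Y_0}:=\bigcap_{q\in\mathbb Q\cap[0,1]}e_q^{-1}(Y_0)$.

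\emph{Step 1: the analogue of Lemma~\ref{lem:pi-full} on $Y$.} The set $S_{Y_0}$ is Borel in $C([0,1];Y)$, because each $e_q$ is continuous and $Y_0=f(X_0)$ is Borel; this last fact was shown in the proof of Proposition~\ref{p:S}, using injectivity of $f|_{X_0}$. For $\sigma\in\mathcal T_Y$ we have $(e_q)_*\sigma\le C\mu_Y$ and $\mu_Y(Y_0)=\mu_X(f^{-1}(Y_0))\ge\mu_X(X_0)=1$. Hence $\sigma(S_{Y_0})=1$. Moreover, any $\eta\in S_{Y_0}\cap\AC$ is $d_Y$-continuous and has $\eta_q\in Y_0$ for all rational $q$, so $\eta([0,1])\subset\bar Y_0$. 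Therefore $\bar g\circ\eta$ is well defined. By Lemma~\ref{lem:speed} applied to $\bar g$, it lies in $\AC([0,1];(X,d_X))$ with $|\tfrac{d}{dt}(\bar g\circ\eta)_t|=|\dot\eta_t|$ for a.e.\ $t$. It is also $\tau_X$-continuous, since $\tau_{d_X}$ is finer than $\tau_X$. So $\Psi$ indeed maps into $C([0,1];X)$.

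\emph{Step 2: Borel measurability of $\Psi$.} The set $\AC([0,1];(Y,d_Y))$ is Borel, as in Lemma~\ref{lem:Phi}, so $S_{Y_0}\cap\AC$ is Borel. The Borel $\sigma$-algebra of $C([0,1];X)$ is generated by the evaluations $e_q$ at rational $q$, so it suffices to show that each $e_q\circ\Psi$ is Borel. On $S_{Y_0}\cap\AC$ we have $e_q\circ\Psi(\eta)=\bar g(\eta_q)=(f|_{X_0})^{-1}(\eta_q)$, because $\eta_q\in Y_0$. The map $(f|_{X_0})^{-1}:Y_0\to X_0$ is Borel: it is the inverse of an injective Borel map between standard Borel sets, which is exactly the map $g$ of Proposition~\ref{p:S}. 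Hence $e_q\circ\Psi=(f|_{X_0})^{-1}\circ e_q$ on this Borel set and constant off it, so it is Borel. The $\sigma$-a.e.\ identity $\Psi(\eta)=\bar g\circ\eta$ follows from Step~1.

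\emph{Step 3: test plan property.} By Steps~1 and~2, $\sigma^g$ is concentrated on $\AC([0,1];(X,d_X))$. The energy is preserved, because the metric speeds agree a.e.:
\[
\int\int_0^1|\dot\gamma_t|^2\,dt\,d\sigma^g(\gamma)=\int\int_0^1|\dot\eta_t|^2\,dt\,d\sigma(\eta)<\infty.
\]
For the compression bound, fix a Borel set $E\subset X$ and $t\in[0,1]$. Using that $\Psi(\eta)=\bar g\circ\eta$ for $\sigma$-a.e.\ $\eta$,
\[
(e_t)_*\sigma^g(E)=\sigma(\eta_t\in\bar g^{-1}(E))=(e_t)_*\sigma(\bar g^{-1}(E))\le C\mu_Y(\bar g^{-1}(E))=C\mu_X(E).
\]
The last equality is \eqref{eq:pushG}. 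The inequality is applied to the $\Bu(Y)$-measurable set $\bar g^{-1}(E)$. This is legitimate because $(e_t)_*\sigma\le C\mu_Y$ on Borel sets extends to universally measurable sets: approximate from outside by Borel sets that are $\mu_Y$-null modulo the set, and note that such sets are also $(e_t)_*\sigma$-null since $(e_t)_*\sigma\ll\mu_Y$.

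The main point needing care is Step~2. One should not rely on the merely $\Bu$-measurable $\bar g$; at rational times, where $\eta_q\in Y_0$, one uses the genuinely Borel inverse of $f|_{X_0}$ instead.
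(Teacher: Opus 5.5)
Your proposal is correct and follows the same route as the paper: the paper's proof just says to repeat Lemma~\ref{lem:Phi} and Proposition~\ref{prop:pushF} with $\bar g$ in place of $\bar f$, and your Steps~1--3 carry this out. In particular, your use of the Borel inverse $(f|_{X_0})^{-1}$ at rational times is the exact analogue of the identity $\bar f(\gamma_q)=f(\gamma_q)$ in Lemma~\ref{lem:Phi}, and your compression estimate via \eqref{eq:pushG} on the universally measurable set $\bar g^{-1}(E)$ matches the one in Proposition~\ref{prop:pushF}.
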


\begin{proof}
The proof is similar to Proposition~\ref{lem:Phi} and~\ref{prop:pushF}.
\end{proof}

\subsection{Weak upper gradients and energy invariance}
A property is said to hold for \emph{$\mathcal T_X$-a.e.\ curve} if it holds $\pi$-a.e.\ for every $\pi\in\mathcal T_X$.
\begin{defn}[Weak upper gradient]
Let $u:X\to\mathbb R$ be $\mu_X$-measurable and let $G:X\to[0,\infty]$ be $\mu_X$-measurable.
We say that $G$ is a \emph{$\mathcal T_X$-weak upper gradient} of $u$ if for $\mathcal T_X$-a.e.\ absolutely continuous curve
$\gamma\in \AC([0,1];(X,d_X))$ one has
\begin{equation}\label{eq:wug}
\bigl|u(\gamma_1)-u(\gamma_0)\bigr|
\le \int_0^1 G(\gamma_t)\,|\dot\gamma_t|\,dt <\infty.
\end{equation}
The minimal weak upper gradient $|Du|_{w, X}$ is the element uniquely determined up to $\mu_X$-a.e.~such that $|Du|_{w, X} \le G$ $\mu_X$-a.e.~for every weak upper gradient~$G$ of $u$. 
\end{defn}
Due to~\cite[Theorem~6.2]{AGS;C}, the Cheeger energy given in Definition~\ref{d:Ch} by $L^2$-relaxation coincides with the one by the weak upper gradient approach:
\begin{align} \label{e:CHE}
\Ch_{d_X, \mu_X}(u)=\int_{X}|Du|_{w, X}^2 d\mu_X \ (\le +\infty) \qquad u \in L^2(X, \mu_X).
\end{align}
%

\begin{prop}\label{prop:wug}
Let $u:Y\to\mathbb R$ be $\mu_Y$-measurable and let $G$ be a $\mathcal T_Y$-weak upper gradient of $u$.
Then $G\circ\bar f$ is a $\mathcal T_X$-weak upper gradient of $u\circ\bar f$.
Consequently,
\[
|D(u\circ\bar f)|_{w,X}\le |Du|_{w,Y}\circ\bar f\qquad \mu_X\text{-a.e.}
\]
\end{prop}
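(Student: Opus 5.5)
The plan is to push test plans on $X$ forward to $Y$ via $\Phi$ and to test the weak upper gradient inequality for $u$ along the image curves. Let $\pi\in\mathcal T_X$ and set $\pi^f:=\Phi_*\pi$. By Proposition~\ref{prop:pushF}, $\pi^f\in\mathcal T_Y$ with the same compression constant. Since $G$ is a $\mathcal T_Y$-weak upper gradient of $u$, there is a $\pi^f$-null Borel set $N\subset C([0,1];Y)$ such that \eqref{eq:wug} holds for every $\eta\notin N$. Then $\Phi^{-1}(N)$ is $\pi$-null.

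By Lemma~\ref{lem:Phi}, Lemma~\ref{lem:pi-full} and the fact that $\pi$ is concentrated on $\AC([0,1];(X,d_X))$, for $\pi$-a.e.\ $\gamma$ we have $\gamma\in S_{X_0}\cap\AC$, $\gamma([0,1])\subset\bar X_0$ and $\Phi(\gamma)=\bar f\circ\gamma\notin N$. For such $\gamma$, set $\eta=\bar f\circ\gamma$. Lemma~\ref{lem:speed} gives $|\dot\eta_t|=|\dot\gamma_t|$ for a.e.\ $t$. Hence
\[
|u\circ\bar f(\gamma_1)-u\circ\bar f(\gamma_0)|=|u(\eta_1)-u(\eta_0)|\le\int_0^1 G(\eta_t)|\dot\eta_t|\,dt=\int_0^1 (G\circ\bar f)(\gamma_t)|\dot\gamma_t|\,dt<\infty .
\]
This is exactly \eqref{eq:wug} for $u\circ\bar f$ with $G\circ\bar f$.

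The main obstacle is measurability. The paper's definition needs $u\circ\bar f$ and $G\circ\bar f$ to be $\mu_X$-measurable, but $\bar f$ is only $\Bu$-measurable on $\bar X_0$ and $u,G$ are only $\mu_Y$-measurable. I would first replace $u$ and $G$ by Borel versions $\tilde u,\tilde G$ that agree $\mu_Y$-a.e. For a Borel (or $\mu_Y$-null) set $B\subset Y$, $(\bar f)^{-1}(B)$ is $\mu_X$-measurable with $\mu_X(\bar f^{-1}(B))=\mu_Y(B\cap\bar Y_0)$. This follows from \eqref{eq:pushF}, extended to the completion, together with $\mu_X(X\setminus\bar X_0)=0$ from Lemma~\ref{lem:closure}. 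Consequently $u\circ\bar f=\tilde u\circ\bar f$ $\mu_X$-a.e., and the compositions are $\mu_X$-measurable.

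It remains to check that changing $u$ and $G$ on $\mu_Y$-null sets does not spoil the curve inequality. For $G$ this holds because $(e_t)_*\pi^f\le C\mu_Y$, so by Fubini $\int\!\int_0^1 \mathbf 1_{\{G\ne\tilde G\}}(\eta_t)\,dt\,d\pi^f=0$. For $u$ it holds because $(e_0)_*\pi^f,(e_1)_*\pi^f\ll\mu_Y$, so the endpoint values are unaffected for $\pi^f$-a.e.\ curve. The same Fubini and compression argument handles the fact that $\bar f$ on $\bar X_0\setminus X_0$ is only universally measurable.

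For the consequence, apply the first part with $G=|Du|_{w,Y}$. Then $G\circ\bar f$ is a $\mathcal T_X$-weak upper gradient of $u\circ\bar f$, and minimality of $|D(u\circ\bar f)|_{w,X}$ gives $|D(u\circ\bar f)|_{w,X}\le|Du|_{w,Y}\circ\bar f$ $\mu_X$-a.e.
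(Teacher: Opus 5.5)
Your proof is correct and follows essentially the same route as the paper. You push $\pi\in\mathcal T_X$ forward by $\Phi$, use Proposition~\ref{prop:pushF} and Lemma~\ref{lem:speed} to carry the weak upper gradient inequality from $\pi^f$-a.e.\ $\eta$ back to $\pi$-a.e.\ $\gamma=\bar g\circ\eta$-type curves, and conclude by minimality. Your extra bookkeeping is sound: Borel representatives of $u$ and $G$, $\mu_X$-measurability of the compositions via \eqref{eq:pushF}, and the compression/Fubini argument showing that null-set modifications are harmless. These fill in measurability details that the paper leaves implicit.
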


\begin{proof}
Fix an arbitrary test plan $\pi\in\mathcal T_X$. Let $\pi^f:=\Phi_*\pi$.
By Proposition~\ref{prop:pushF} we have $\pi^f\in\mathcal T_Y$.
Moreover, by Lemma~\ref{lem:speed}, for $\pi$-a.e.\ $\gamma$, the curve $\eta=\bar f\circ\gamma$ satisfies
\begin{equation}\label{eq:speed-id}
|\dot\eta_t|=|\dot\gamma_t|\qquad\text{for a.e.\ }t\in(0,1).
\end{equation}
Since $G$ is a $\mathcal T_Y$-weak upper gradient of $u$, the inequality \eqref{eq:wug} holds for $\pi^f$-a.e.\ $\eta$:
\[
|u(\eta_1)-u(\eta_0)|
\le \int_0^1 G(\eta_t)\,|\dot\eta_t|\,dt.
\]
Writing $\eta=\bar f\circ\gamma$ and using \eqref{eq:speed-id}, we obtain that, for $\pi$-a.e.\ $\gamma$,
\[
\bigl|(u\circ\bar f)(\gamma_1)-(u\circ\bar f)(\gamma_0)\bigr|
=|u(\eta_1)-u(\eta_0)|
\le \int_0^1 G(\bar f(\gamma_t))\,|\dot\gamma_t|\,dt
=\int_0^1 (G\circ\bar f)(\gamma_t)\,|\dot\gamma_t|\,dt.
\]
Since $\pi\in\mathcal T_X$ was arbitrary, this proves that $G\circ\bar f$ is a $\mathcal T_X$-weak upper gradient of $u\circ\bar f$.
Taking $G=|Du|_{w,Y}$ and using the minimality property of $|D(u\circ\bar f)|_{w,X}$, we have
\[
|D(u\circ\bar f)|_{w,X}\le |Du|_{w,Y}\circ\bar f\qquad \mu_X\text{-a.e.},
\]
which is the sought statement. 
\end{proof}

\begin{prop}\label{prop:eq}
For every $\mu_Y$-measurable~$u$,
\[
|D(u\circ\bar f)|_{w,X}=|Du|_{w,Y}\circ\bar f\qquad \mu_X\text{-a.e.}
\]
\end{prop}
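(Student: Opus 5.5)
Proposition~\ref{prop:wug} already gives the inequality $|D(u\circ\bar f)|_{w,X}\le |Du|_{w,Y}\circ\bar f$ $\mu_X$-a.e. The plan is to prove the reverse inequality by the symmetric argument with $\bar g=\bar f^{-1}$, and then transport it back to $X$.

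\textbf{Step 1: the analogue of Proposition~\ref{prop:wug} for $\bar g$.} For a $\mu_X$-measurable $v:X\to\R$ with $\mathcal T_X$-weak upper gradient $H$, the claim is that $H\circ\bar g$ is a $\mathcal T_Y$-weak upper gradient of $v\circ\bar g$. Fix $\sigma\in\mathcal T_Y$. By Proposition~\ref{prop:pushG}, $\sigma^g=\Psi_*\sigma\in\mathcal T_X$ and $\Psi(\eta)=\bar g\circ\eta$ for $\sigma$-a.e.\ $\eta$. By the $Y$-version of Lemma~\ref{lem:pi-full}, such $\eta$ take values in $\bar Y_0$, so Lemma~\ref{lem:speed} gives $|\tfrac{d}{dt}(\bar g\circ\eta)_t|=|\dot\eta_t|$ for a.e.\ $t$. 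The upper gradient inequality for $v$ along $\sigma^g$-a.e.\ curve therefore transfers verbatim. By minimality,
\[
|D(v\circ\bar g)|_{w,Y}\le |Dv|_{w,X}\circ\bar g \qquad \mu_Y\text{-a.e.}
\]

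\textbf{Step 2: apply Step 1 to $v:=u\circ\bar f$.} Extend $\bar f,\bar g$ arbitrarily outside the full-measure sets $\bar X_0$, $\bar Y_0$; by Lemma~\ref{lem:closure}, Lemmas~\ref{lem:extend}--\ref{lem:inverse} and universal measurability, such modifications do not affect $\mu$-a.e.\ statements. Since $\bar f\circ\bar g=\mathrm{id}$ on $\bar Y_0$, we have $v\circ\bar g=u$ $\mu_Y$-a.e. Minimal weak upper gradients depend only on the $\mu$-a.e.\ class of the function, because test plans have bounded compression. Hence
\[
|Du|_{w,Y}\le |D(u\circ\bar f)|_{w,X}\circ\bar g \qquad \mu_Y\text{-a.e.}
\]

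\textbf{Step 3: transport back to $X$.} Compose the last inequality with $\bar f$. The exceptional $\mu_Y$-null set $N$ pulls back to the $\mu_X$-null set $\bar f^{-1}(N)$, by~\eqref{eq:pushF} applied to a Borel (or analytic) null superset of $N$ inside $\bar Y_0$. Using $\bar g\circ\bar f=\mathrm{id}$ on $\bar X_0$, we obtain $|Du|_{w,Y}\circ\bar f\le |D(u\circ\bar f)|_{w,X}$ $\mu_X$-a.e. Combined with Proposition~\ref{prop:wug}, this gives the asserted equality.

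\textbf{Main obstacle.} The delicate point is measurability bookkeeping. The functions $u\circ\bar f$ and $|Du|_{w,Y}\circ\bar f$ are only $\Bu$-measurable, so one must check that minimal weak upper gradients are well defined for universally (i.e.\ $\mu$-) measurable functions. One must also check that null sets are preserved in both directions, via~\eqref{eq:pushF} and~\eqref{eq:pushG}, using the completions of $\mu_X$ and $\mu_Y$. I would handle this by choosing Borel representatives $\mu$-a.e.\ at each step and invoking the null-set invariance of weak upper gradients under test plans.
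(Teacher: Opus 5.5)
Your proposal is correct and follows essentially the same route as the paper. Both arguments pull test plans on $Y$ back to $X$ via $\Psi$ (Proposition~\ref{prop:pushG}), use Lemma~\ref{lem:speed} to show that $|D(u\circ\bar f)|_{w,X}\circ\bar g$ is a $\mathcal T_Y$-weak upper gradient of $u$, then compose with $\bar f$ and combine with Proposition~\ref{prop:wug}. The only differences are cosmetic: you first isolate the general $\bar g$-analogue of Proposition~\ref{prop:wug} before specialising to $v=u\circ\bar f$, and your null-set bookkeeping in Step~3 is slightly more explicit than the paper's.
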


\begin{proof}
Set $v:=u\circ\bar f$.
We show that
\begin{equation}\label{eq:rev-ineq}
|Du|_{w,Y}\le |Dv|_{w,X}\circ \bar g
\qquad \mu_Y\text{-a.e.\ on }Y,
\end{equation}
where $\bar g:=\bar f^{-1}:\bar Y_0\to\bar X_0$ is the inverse isometry (defined $\mu_Y$-a.e.).
Composing \eqref{eq:rev-ineq} with $\bar f$ then yields
$|Du|_{w,Y}\circ\bar f\le |Dv|_{w,X}$ $\mu_X$-a.e.. Together with the opposite inequality from
Proposition~\ref{prop:wug}, we obtain  the desired equality.

Let $\sigma\in\mathcal T_Y$ be an arbitrary test plan on $Y$.
By Proposition~\ref{prop:pushG}, the push-forward
$\sigma^g:=\Psi_*\sigma\in\mathcal T_X$, where $\Psi(\eta)=\bar g\circ\eta$ $\sigma$-a.s.
In particular, $\sigma$ is concentrated on $d_Y$-continuous $\AC$ curves taking values in $\bar Y_0$,
so $\bar g\circ\eta$ is well-defined for $\sigma$-a.e.\ $\eta$.

Since $|Dv|_{w,X}$ is a $\mathcal T_X$-weak upper gradient of $v$, the inequality \eqref{eq:wug}
holds for $\sigma^g$-a.e.\ curve $\gamma$:
\begin{equation}\label{eq:wug-g}
|v(\gamma_1)-v(\gamma_0)|
\le \int_0^1 |Dv|_{w,X}(\gamma_t)\,|\dot\gamma_t|\,dt.
\end{equation}
Now, for $\sigma$-a.e.~$\eta$, set $\gamma:=\bar g\circ\eta$. Then,  $\gamma$ has law $\sigma^g$.
Moreover, since $\bar g$ is an isometry, Lemma~\ref{lem:speed} yields
\begin{equation}\label{eq:speedG}
|\dot\gamma_t|=|\dot\eta_t|\qquad\text{for a.e.\ }t\in(0,1),
\end{equation}
and because $v=u\circ\bar f$ and $\bar f\circ\bar g=\mathrm{Id}$ on $\bar Y_0$,
\[
v(\gamma_i)=v(\bar g(\eta_i))=u(\bar f(\bar g(\eta_i)))=u(\eta_i),\qquad i=0,1.
\]
Substituting these identities into \eqref{eq:wug-g} and using \eqref{eq:speedG}, we have  that, for $\sigma$-a.e.\ $\eta$,
\[
|u(\eta_1)-u(\eta_0)|
\le \int_0^1 |Dv|_{w,X}(\bar g(\eta_t))\,|\dot\eta_t|\,dt
= \int_0^1 (|Dv|_{w,X}\circ\bar g)(\eta_t)\,|\dot\eta_t|\,dt.
\]
Since $\sigma\in\mathcal T_Y$ was arbitrary, this shows that $|Dv|_{w,X}\circ\bar g$ is a $\mathcal T_Y$-weak upper gradient of $u$.
By the minimality property of $|Du|_{w,Y}$, we obtain \eqref{eq:rev-ineq}:
\[
|Du|_{w,Y}\le |Dv|_{w,X}\circ\bar g\qquad \mu_Y\text{-a.e.}
\]
Composing \eqref{eq:rev-ineq} with $\bar f$ and using that $\bar g\circ\bar f=\mathrm{Id}$ on $\bar X_0$
(and $\mu_X(X\setminus\bar X_0)=0$), we have 
\[
|Du|_{w,Y}\circ\bar f \le |Dv|_{w,X}\qquad \mu_X\text{-a.e.}
\]
Combined with Proposition~\ref{prop:wug} applied to $u$ (which gives the reverse inequality
$|Dv|_{w,X}\le |Du|_{w,Y}\circ\bar f$ $\mu_X$-a.e.), we conclude
\[
|D(u\circ\bar f)|_{w,X}=|Du|_{w,Y}\circ\bar f\qquad \mu_X\text{-a.e.},
\]
as claimed.
\end{proof}

\begin{thm}[Invariance of the Cheeger energy]\label{thm:energy}
Define $U:L^2(Y,\mu_Y)\to L^2(X,\mu_X)$ by $Uu:=u\circ\bar f$ (defined $\mu_X$-a.e.).
Then $U$ is an isometric isomorphism with inverse map $Vv:=v \circ \bar g$ and for every $u\in L^2(Y,\mu_Y)$,
\[
\Ch_{d_X,\mu_X}(Uu)=\Ch_{d_Y,\mu_Y}(u).
\]
\end{thm}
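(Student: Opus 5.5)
The plan is to establish first the $L^2$ statement and then transfer the energy through the identity \eqref{e:CHE} and Proposition~\ref{prop:eq}.

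\emph{Step 1: $U$ is a well-defined isometric isomorphism.} For $u\in L^2(Y,\mu_Y)$ we may take a Borel representative. Then $u\circ\bar f$ is $\Bu(X)$-measurable, since $\bar f$ is $\Bu$-measurable by Lemma~\ref{lem:extend}. Hence it coincides $\mu_X$-a.e.\ with a Borel function. Because $\mu_X(X\setminus \bar X_0)=0$ by Lemma~\ref{lem:closure}, the composition is defined $\mu_X$-a.e. Two $\mu_Y$-a.e.\ equal functions give $\mu_X$-a.e.\ equal compositions by \eqref{eq:pushF}. The same identity, extended from indicators of analytic sets to simple functions and then by monotone convergence to all nonnegative $\Bu$-measurable functions, gives
\[
\int_X |u\circ\bar f|^2\,d\mu_X=\int_Y |u|^2\,d\mu_Y .
\]
Symmetrically, \eqref{eq:pushG} shows that $V$ is a well-defined isometry $L^2(X,\mu_X)\to L^2(Y,\mu_Y)$. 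Since $\bar g\circ\bar f=\mathrm{Id}$ on $\bar X_0$ and $\bar f\circ\bar g=\mathrm{Id}$ on $\bar Y_0$, both full-measure sets, we get $VU=\mathrm{Id}$ and $UV=\mathrm{Id}$ a.e. So $U$ is an isometric isomorphism with inverse $V$.

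\emph{Step 2: energy identity.} By \eqref{e:CHE} on both spaces,
\[
\Ch_{d_X,\mu_X}(Uu)=\int_X |D(u\circ\bar f)|_{w,X}^2\,d\mu_X .
\]
Proposition~\ref{prop:eq} lets us replace the integrand $\mu_X$-a.e.\ by $(|Du|_{w,Y}\circ\bar f)^2$. The change of variables from Step 1 then gives
\[
\int_X(|Du|_{w,Y}\circ\bar f)^2\,d\mu_X=\int_Y|Du|_{w,Y}^2\,d\mu_Y=\Ch_{d_Y,\mu_Y}(u).
\]
The case in which $u$ has no weak upper gradient with finite energy must be handled too. There Proposition~\ref{prop:wug} and its symmetric counterpart, built from $\bar g$ via Proposition~\ref{prop:pushG}, transfer weak upper gradients in both directions. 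So $u$ has one iff $Uu$ has one, and both energies equal $+\infty$ simultaneously.

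\emph{Main obstacle.} I expect the main difficulty to be measurability bookkeeping. The minimal weak upper gradient $|Du|_{w,Y}$ is only defined $\mu_Y$-a.e. Its composition with $\bar f$ has to be well defined $\mu_X$-a.e. and independent of representatives, and Proposition~\ref{prop:eq} has to apply to $L^2$ classes rather than pointwise functions. I would handle this by fixing Borel representatives and using \eqref{eq:pushF} on the exceptional $\mu_Y$-null sets: each such set is contained in a Borel null set $N$, and $\bar f^{-1}(N)$ is then $\mu_X$-null. This makes every composition class-invariant.
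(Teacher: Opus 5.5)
Your proposal is correct and follows the paper's proof. The $L^2$-isometry and the mutual-inverse property come from \eqref{eq:pushF}, \eqref{eq:pushG} and the identities $\bar g\circ\bar f=\mathrm{Id}$, $\bar f\circ\bar g=\mathrm{Id}$ on the full-measure sets $\bar X_0,\bar Y_0$, and the energy identity then follows from \eqref{e:CHE}, Proposition~\ref{prop:eq} and the change of variables; your extra care with Borel representatives, null sets and the infinite-energy case only makes explicit what the paper's short proof leaves implicit.
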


\begin{proof}
From \eqref{eq:pushF}, $\|Uu\|_{L^2(\mu_X)}=\|u\|_{L^2(\mu_Y)}$ and it is easy to check $U \circ V=Id_{L^2(\mu_X)}$ and $V \circ U=Id_{L^2(\mu_Y)}$ by $\bar f \circ \bar g=Id_{\bar Y_0}$ and $\bar g \circ \bar f = Id_{\bar X_0}$ and $\mu_X(\bar X_0)=\mu_Y(\bar Y_0)=1$. 
By Proposition~\ref{prop:eq} and \eqref{e:CHE}, 
\[
\Ch_{d_X,\mu_X}(Uu)=\int_X |D(u\circ\bar f)|_{w,X}^2\,d\mu_X
=\int_X (|Du|_{w,Y}^2\circ\bar f)\,d\mu_X
=\int_Y |Du|_{w,Y}^2\,d\mu_Y
=\Ch_{d_Y,\mu_Y}(u). \qedhere
\]
\end{proof}

\section{Stability of curvature bound and functional inequality under isomorphism} \label{sec:Sta}
We prove that the properties we studied in Section \ref{s:FI} are stable under the isomorphism introduced in Definition~\ref{d:mmiso}.
\begin{prop} \label{p:SI}
All the properties $\CD(K,\infty)$, $\RCD(K,\infty)$, $\EVI(K,\infty)$, \(\LS(C_{LS},D_{LS})\), \(\PI(C_P)\), \(\LH(K)\), \(H^\infty(K)\), and \(T(C_T)\)  are invariant under the isomorphism in the sense of Definition~\ref{d:mmiso}. 
\end{prop}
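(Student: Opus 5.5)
By symmetry of $\cong$ (Proposition~\ref{p:S}), it suffices to show that if $X\cong Y$ and $Y$ has one of the listed properties, then so does $X$. Throughout I use the isometric extension $\bar f:\bar X_0\to\bar Y_0$ of Lemma~\ref{lem:extend}, its inverse $\bar g$ of Lemma~\ref{lem:inverse}, and the measure-preserving identities \eqref{eq:pushF}, \eqref{eq:pushG}. The plan has three steps.

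\textbf{Step 1: transfer of measures, entropy and Wasserstein distances.} Define $\bar f_\#:\mathcal P_*(X)\to\mathcal P_*(Y)$ by $\rho\mu_X\mapsto (\rho\circ\bar g)\mu_Y$, with inverse $\bar g_\#$. By \eqref{eq:pushF}--\eqref{eq:pushG}, after completing the measures so that universally measurable sets can be used, this is a bijection. It preserves entropy, since $\int(\rho\circ\bar g)\log(\rho\circ\bar g)\,d\mu_Y=\int\rho\log\rho\,d\mu_X$. For Wasserstein distances, any coupling $\c\in\Cpl(\nu,\sigma)$ of absolutely continuous measures is concentrated on $\bar X_0\times\bar X_0$. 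Its image under $\bar f\times\bar f$ (defined as a Borel-a.e. map, via the universally measurable modification as in Lemma~\ref{lem:Phi}) is a coupling of the transferred measures with the same $d^2$-cost, because $\bar f$ is an isometry. The same argument with $\bar g$ gives the reverse inequality, so $W_{2,d_Y}(\bar f_\#\nu,\bar f_\#\sigma)=W_{2,d_X}(\nu,\sigma)$. For $\TL$ I also need the case where one measure is $\mu_X$ itself; this is covered because $\bar f_\#\mu_X=\mu_Y$.

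\textbf{Step 2: transfer of energy and heat flow.} Theorem~\ref{thm:energy} gives a unitary $U:L^2(\mu_Y)\to L^2(\mu_X)$ with $\Ch_X\circ U=\Ch_Y$. Hence the parallelogram law transfers, and $U$ maps subdifferentials to subdifferentials: $l\in\partial^-\Ch_Y(f)$ if and only if $Ul\in\partial^-\Ch_X(Uf)$. By uniqueness of gradient flows this gives $H^X_t U=UH^Y_t$. The map $U$ also preserves bounded Lipschitz measurable functions up to a.e. modification, so the $\PI$ class of test functions is handled by $W^{1,2}$ density, or directly using $\Ch$.

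\textbf{Step 3: transfer of each property.} $\CD(K,\infty)$ follows by mapping the $Y$-geodesic $\nu^Y_t$ between $\bar f_\#\nu_0$ and $\bar f_\#\nu_1$ back via $\bar g_\#$; distances and entropies are preserved by Step 1. $\RCD$ then follows from $\CD$ together with Step 2. $\EVI$ follows because $(H^X_t\rho)\mu_X=\bar g_\#\bigl((H^Y_t(\rho\circ\bar g))\mu_Y\bigr)$, so both sides of the inequality coincide. $\LS$, $\PI$ and $\TL$ transfer immediately, since all the quantities involved ($\Ch$, $L^p$ norms, entropy, $W_2$) are preserved. For $\LH$ and $\DFH$, take the full-measure set $\Omega_Y$ given for $u\circ\bar g$ and set $\Omega_X:=\bar g(\Omega_Y\cap\bar Y_0)\cap\bar X_0$ intersected with the set where $H^X_t u=(H^Y_t(u\circ\bar g))\circ\bar f$ pointwise. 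Since $d_Y(\bar f x,\bar f y)=d_X(x,y)$ on $\bar X_0$, the inequality transfers; a Borel subset of full measure can be extracted if needed.

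\textbf{Main obstacle.} I expect the measurability bookkeeping in Step 1 to be the main obstacle. Couplings pushed by $\bar f\times\bar f$ are defined only on the completed, universally measurable $\sigma$-algebra, and must be restricted to Borel probability measures on $Y\times Y$ with the correct marginals. I would handle this exactly as in Lemmas~\ref{lem:Phi} and~\ref{prop:pushF}: replace $\bar f$ by a Borel map agreeing with $f$ on $X_0$, where the coupling is concentrated because $X_0$ has full measure for both marginals.
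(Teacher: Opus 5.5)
Your proposal is correct and follows essentially the same route as the paper. Entropy and $W_{2}$ are transferred via push-forwards of absolutely continuous measures and couplings concentrated on $X_0\times X_0$, the heat flows are intertwined through the unitary $U$ of Theorem~\ref{thm:energy}, and each inequality is then transported, with $\Omega_X$ built from $\Omega_Y$ for the Harnack inequalities.

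The differences are minor. The paper avoids your anticipated measurability bookkeeping from the outset: it works with the Borel map $f$ itself and a Borel inverse $g$ on the Borel set $Y_0=f(X_0)$ (Lusin--Souslin, as in Proposition~\ref{p:S}), rather than with $\bar f,\bar g$. Your density argument for the $\Lip_b$ test class in $\PI$ makes explicit a point the paper passes over.
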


\begin{proof} We prove that if \(Y\) satisfies one of the listed properties, then so does \(X\). Since
isomorphism is an equivalence relation by Proposition~\ref{p:S}, the reverse implication follows
by applying the same argument to an isomorphism from \(Y\) to \(X\).

 Let
\(
f:X\to Y
\)
be an isomorphism in the sense of Definition~\ref{d:mmiso}. Thus there exists a Borel set
\(X_0\subset X\) with \(\mu_X(X_0)=1\) such that
\[
d_Y(f(x),f(x'))=d_X(x,x')
\qquad
\text{for all }x,x'\in X_0,
\]
and
\(
f_*\mu_X=\mu_Y.
\)
Set
\[
Y_0:=f(X_0).
\]
As in the proof of Proposition~\ref{p:S}, the set \(Y_0\) is Borel, \(\mu_Y(Y_0)=1\), and
\(f|_{X_0}:X_0\to Y_0\) admits a Borel inverse. We denote this inverse by
\[
g:Y_0\to X_0
\]
and extend \(g\) arbitrarily to a Borel map \(g:Y\to X\). Then, we have 
\[
g_*\mu_Y=\mu_X.
\]

If
\(\nu=\rho\mu_X\ll\mu_X\), then
\[
f_*\nu=(\rho\circ g)\mu_Y .
\]
Hence
\(
\operatorname{Ent}_{\mu_Y}(f_*\nu)
=
\operatorname{Ent}_{\mu_X}(\nu).
\)
Similarly, if \(\sigma,\nu\ll\mu_X\), then
\[
W_{2,d_Y}(f_*\sigma,f_*\nu)=W_{2,d_X}(\sigma,\nu).
\]
Indeed, if \(\gamma\in\operatorname{Cpl}(\sigma,\nu)\), then \((f\times f)_*\gamma\) is a
coupling of \(f_*\sigma\) and \(f_*\nu\), and since \(\gamma\) is concentrated on
\(X_0\times X_0\), we have 
\[
\int_{Y\times Y}d_Y(y,y')^2\,d(f\times f)_*\gamma(y,y')
=
\int_{X\times X}d_X(x,x')^2\,d\gamma(x,x').
\]
This gives
\[
W_{2,d_Y}(f_*\sigma,f_*\nu)\le W_{2,d_X}(\sigma,\nu).
\]
Applying the same argument to \(g\) gives the reverse inequality.

By Theorem~\ref{thm:energy}, the pullback operator
\(
U:L^2(Y,\mu_Y)\to L^2(X,\mu_X) \)
given by
\(Uu:=u\circ f
\)
is an isometric isomorphism and satisfies
\[
\Ch_{d_X,\mu_X}(Uu)=\Ch_{d_Y,\mu_Y}(u)
\qquad
\text{for every }u\in L^2(Y,\mu_Y).
\]
Therefore the \(L^2\)-gradient flows are intertwined:
\[
U H_t^Y=H_t^X U
\qquad
\text{for every }t\ge0.
\]
Indeed, this follows from the uniqueness of gradient flows of lower semicontinuous convex
functionals under  isometries on Hilbert spaces.

We now prove the invariance of the listed properties.

Suppose that \(Y\) satisfies \(\CD(K,\infty)\). Let
\(
\nu_0,\nu_1\in D(\operatorname{Ent}_{\mu_X})
\)
with
\(
W_{2,d_X}(\nu_0,\nu_1)<\infty.
\)
Set
\(
\eta_i:=f_*\nu_i (i=0,1).
\)
Then
\[
\eta_0,\eta_1\in D(\operatorname{Ent}_{\mu_Y}),
\qquad
W_{2,d_Y}(\eta_0,\eta_1)
=
W_{2,d_X}(\nu_0,\nu_1).
\]
By the \(\CD(K,\infty)\) condition on \(Y\), there exists a constant-speed
\(W_{2,d_Y}\)-geodesic \((\eta_t)_{t\in[0,1]}\) joining \(\eta_0\) and \(\eta_1\) such that
\[
\operatorname{Ent}_{\mu_Y}(\eta_t)
\le
(1-t)\operatorname{Ent}_{\mu_Y}(\eta_0)
+t\operatorname{Ent}_{\mu_Y}(\eta_1)
-\frac K2 t(1-t)W_{2,d_Y}(\eta_0,\eta_1)^2 .
\]
Define
\(
\nu_t:=g_*\eta_t.
\)
Using the entropy and Wasserstein identities above, we obtain
\[
W_{2,d_X}(\nu_s,\nu_t)
=
|s-t|W_{2,d_X}(\nu_0,\nu_1)
\]
and
\[
\operatorname{Ent}_{\mu_X}(\nu_t)
\le
(1-t)\operatorname{Ent}_{\mu_X}(\nu_0)
+t\operatorname{Ent}_{\mu_X}(\nu_1)
-\frac K2 t(1-t)W_{2,d_X}(\nu_0,\nu_1)^2 .
\]
Thus \(X\) satisfies \(\CD(K,\infty)\).

The invariance of \(\RCD(K,\infty)\) follows from the invariance of \(\CD(K,\infty)\) and the
invariance of infinitesimal Hilbertianity. 

We next prove the invariance of \(\EVI(K,\infty)\). Suppose that \(Y\) satisfies
\(\EVI(K,\infty)\). Let
\(
\sigma=\rho\mu_X\in D(\operatorname{Ent}_{\mu_X})
\)
and let
\(
\nu\in D(\operatorname{Ent}_{\mu_X})
\)
with
\(
W_{2,d_X}(\sigma,\nu)<\infty.
\)
Set
\[
\bar\sigma:=f_*\sigma,
\qquad
\bar\nu:=f_*\nu.
\]
Then \(\bar\sigma,\bar\nu\in D(\operatorname{Ent}_{\mu_Y})\) and
\(
W_{2,d_Y}(\bar\sigma,\bar\nu)
=
W_{2,d_X}(\sigma,\nu).
\)
Moreover, the intertwining property of the heat flow implies
\[
f_*(H_t^X\sigma)=H_t^Y\bar\sigma .
\]
Applying the \(\EVI(K,\infty)\) inequality on \(Y\) to \(\bar\sigma\) and \(\bar\nu\), and then
using the preservation of entropy and \(W_{2,d}\), gives
\[
\frac12\frac{d^+}{dt}W_{2,d_X}^2(H_t^X\sigma,\nu)
+\frac K2 W_{2,d_X}^2(H_t^X\sigma,\nu)
\le
\operatorname{Ent}_{\mu_X}(\nu)-\operatorname{Ent}_{\mu_X}(H_t^X\sigma).
\]
Hence \(X\) satisfies \(\EVI(K,\infty)\).

The invariance of the log-Sobolev and Poincar\'e inequalities follows from the preservation of
the \(L^2\)-norm, the mean, and the Cheeger energy. 

The invariance of the Talagrand inequality \(\TL(C_T)\) is immediate from the preservation of the entropy and the
Wasserstein distance.

It remains to discuss the Harnack inequalities. Suppose \(Y\) satisfies the dimension-free Harnack
inequality \(\DFH(K)\). Let \(u\in L^\infty(X,\mu_X)\) be non-negative. Since \(U\) is onto,
there exists \(v\in L^\infty(Y,\mu_Y)\) such that
\[
u=Uv=v\circ f
\qquad \mu_X\text{-a.e.}
\]
By the  intertwining,
\[
H_t^X u=U H_t^Yv .
\]
Let \(\Omega_Y\subset Y\) be a full-measure set on which the \(\DFH(K)\) 
holds. Then
\[
\Omega_X:=X_0\cap f^{-1}(\Omega_Y)
\]
has full \(\mu_X\)-measure. For \(x,x'\in\Omega_X\), applying the Harnack inequality on \(Y\) to
\(f(x),f(x')\), and using
\[
d_Y(f(x),f(x'))=d_X(x,x'),
\]
gives the desired \(\DFH(K)\) on \(X\).
The log-Harnack inequality is transported in exactly the same way, so we omit the proof. 
\end{proof}

\section{Examples} \label{ssec:WS}
\subsection{Infinite products of mm-spaces and their quotient spaces}
In this subsection, we discuss infinite product spaces, which are particular cases of inverse limits. 
Let $(Y_n, d_{Y_n}, \mu_{Y_n})$ be a sequence of isomorphic copies of an mm-space $(Y, d_Y, \mu_Y)$.  Recall that $\mu_Y$ is a probability measure. 
Define the infinite product extended mm-space by 
\begin{align*}
X=\prod_{n=1}^\infty Y_n,  \quad d_X(x, y)^2= \sum_{n=1}^\infty d_{Y_n}(x_n,  y_n)^2, \quad \mu_X = \bigotimes_{n=1}^\infty \mu_{Y_n} \comma
\end{align*}
where we equip $X$ with the product topology $\tau_X$, and $x:=\seqn{x_n}$ and $y:=\seqn{y_n}$ are points of $X$. The space $X$ is a particular case of inverse limits, and we call $X=(X, \tau_X, d_{X}, \mu_X)=Y^{\times \infty}$ {\it the infinite product extended mm-space}. 
\begin{cor} \label{c:G}
If $(Y, d_{Y}, \mu_{Y})$ satisfies either $\CD(K,\infty), \RCD(K,\infty), \EVI(K,\infty)$, then the corresponding property holds for  $(X, \tau_X, d_X, \mu_X)$ respectively. 
\end{cor}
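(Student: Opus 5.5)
The plan is to realise $X=Y^{\times\infty}$ as an inverse limit of the finite products $X_n:=Y^{\times n}=(Y^n, d_{X_n},\mu_Y^{\otimes n})$, where $d_{X_n}(x,y)^2=\sum_{k=1}^n d_Y(x_k,y_k)^2$. The maps $f_n:X_{n+1}\to X_n$ will be the coordinate projections forgetting the last factor. Each $f_n$ is $1$-Lipschitz for the $\ell^2$-product distances and pushes $\mu_Y^{\otimes(n+1)}$ forward to $\mu_Y^{\otimes n}$, so $X_n\prec X_{n+1}$. Each $X_n$ is an mm-space: it is complete, separable, and $\mu_Y^{\otimes n}$ has full support because $\mu_Y$ does.

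The next step is to identify $\varprojlim X_n$ with $X$. The map $x\mapsto (\pi_n(x))_n=((x_1,\dots,x_n))_n$ is a bijection from $\prod Y_n$ onto $\varprojlim X_n$, and it is a homeomorphism for the product topologies. It intertwines the measures, since both measures are determined by the same finite-dimensional marginals; this is the uniqueness part of Kolmogorov's theorem. Finally,
\[
\lim_{n}d_{X_n}(\pi_n x,\pi_n y)^2=\sum_{k=1}^\infty d_{Y}(x_k,y_k)^2=d_X(x,y)^2,
\]
so the identification is an isometry on all of $X$. In particular it is an isomorphism in the sense of Definition~\ref{d:mmiso}, and by Proposition~\ref{p:SI} we may argue directly on the inverse limit.

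It then suffices to show that $X_n=Y^{\times n}$ inherits $\CD(K,\infty)$, $\RCD(K,\infty)$ or $\EVI(K,\infty)$ from $Y$, and to apply Theorem~\ref{t:PCD}, Corollary~\ref{c:RCD} and Theorem~\ref{t:EVI} respectively, with $K_n\equiv K$. This tensorisation step is the main point, and I expect it to be the only real obstacle. For the finite $\ell^2$-product of mm-spaces, I would invoke the known tensorisation results rather than reprove them. For $\RCD(K,\infty)$ and $\EVI(K,\infty)$ I would use Ambrosio--Gigli--Savar\'e (Riemannian Ricci bounds, tensorization of $\RCD(K,\infty)$, which also gives the $\EVI$ characterisation on finite products). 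For $\CD(K,\infty)$ I would use Sturm's tensorisation under the non-branching assumption, or the general $\CD(K,\infty)$ tensorisation for products of mm-spaces.

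The case I would worry about is plain $\CD(K,\infty)$ without non-branching, since the general tensorisation of $\CD(K,\infty)$ is delicate. If no citation covers it, I would restrict the $\CD$ statement to the setting where the known product result applies, for instance essentially non-branching $Y$. The $\RCD$ and $\EVI$ cases are safe, because there the heat flow on the product is the product of the factor heat flows and $\EVI$ tensorises.

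Once each $X_n$ satisfies the property, the stability theorems pass it to $\varprojlim X_n$, and the isomorphism invariance transfers it to $X$.
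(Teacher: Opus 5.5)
Your proposal follows the paper's proof: the paper also treats $Y^{\times\infty}$ as the inverse limit of the finite products $Y^{\times n}$, invokes tensorisation (Proposition~4.16 of \cite{St1} for $\CD(K,\infty)$, Theorem~6.13 of \cite{AGS;D} for $\RCD(K,\infty)$ and $\EVI(K,\infty)$), and then applies Theorem~\ref{t:mthm}. Your extra appeal to Proposition~\ref{p:SI} is harmless but unnecessary, since the identification of $X$ with $\varprojlim Y^{\times n}$ is a homeomorphism, a global isometry and measure-preserving.

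Your worry about plain $\CD(K,\infty)$ is well founded. The tensorisation result of \cite{St1} that the paper cites assumes the factors are non-branching, and the paper's one-line proof does not address this. So for $\CD(K,\infty)$, both your argument and the paper's are complete only under an (essentially) non-branching assumption on $Y$. The $\RCD(K,\infty)$ and $\EVI(K,\infty)$ cases go through exactly as you describe.
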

\begin{proof}
Using the tensorisation property of $\CD(K,\infty)$ (\cite[Proposition 4.16]{St1}) $\RCD(K,\infty), \EVI(K,\infty)$ (\cite[Theorem 6.13]{AGS;D}), the proof follows by Theorem~\ref{t:mthm}.
\end{proof}

\begin{defn}[Abstract Wiener space]
Let \(H\) be a separable Hilbert space and let \(W\) be a separable Banach space. We say that
\((W,H,\mu)\) is an abstract Wiener space if \(H\) is continuously and densely embedded into
\(W\), and \(\mu\) is the centred Gaussian measure on \(W\) whose Cameron--Martin space is
\(H\). Equivalently, for every \(\ell\in W^*\), the random variable \(\ell\) is a centred Gaussian
random variable on \((W,\mu)\), and its variance is given by
\[
\int_W \ell(w)^2\,d\mu(w)
=
\|h_\ell\|_H^2,
\]
where \(h_\ell\in H\) is the unique element satisfying
\[
\ell(h)=\langle h,h_\ell\rangle_H
\qquad
\forall h\in H.
\]
\end{defn}

Let \((W,H,\mu)\) be an abstract Wiener space. We equip \(W\) with the Cameron--Martin
extended distance
\[
d_H(w_1,w_2):=
\begin{cases}
\|w_1-w_2\|_H, & w_1-w_2\in H,\\
+\infty, & \text{otherwise}.
\end{cases}
\]
Thus
\(
(W,\tau_W,d_H,\mu)
\)
is a Polish extended metric measure space, where \(\tau_W\) denotes the Banach topology on \(W\).
We also write
\[
X_1:=(\mathbb R^\infty,\tau_{\mathrm{prod}},d_{X_1},\gamma^{\otimes\infty})
\]
for the infinite product Gaussian space. Its Cameron--Martin space is
\[
H_{X_1}:=
\left\{
a=(a_n)_{n=1}^{\infty}\in \mathbb R^\infty:
\sum_{n=1}^{\infty}|a_n|^2<\infty
\right\},
\]
equipped with
\[
\|a\|_{H_{X_1}}^2:=\sum_{n=1}^{\infty}|a_n|^2,
\]
and
\[
d_{X_1}(a,b):=
\begin{cases}
\|a-b\|_{H_{X_1}}, & a-b\in H_{X_1},\\
+\infty, & \text{otherwise}.
\end{cases}
\]

\begin{prop}[Abstract Wiener spaces and the infinite product Gaussian space] \label{p:AWS-prod} 
Let \((W,H,\mu)\) be an infinite-dimensional abstract Wiener space. Then,  there exist Borel vector subspaces
\(
X_0\subset X_1\) and 
\(W_0\subset W
\)
with
\(
\gamma^{\otimes\infty}(X_0)=1\) and 
\(\mu(W_0)=1,
\)
and Borel linear inverse maps
\(
J:X_0\to W_0\) and 
\(K:W_0\to X_0
\)
such that
\[
J_*(\gamma^{\otimes\infty})=\mu
\]
and
\[
d_H(J(a),J(b))=d_{X_1}(a,b)
\qquad
\forall a,b\in X_0.
\]
In particular, 
\[
(X_1,\tau_{\mathrm{prod}},d_{X_1},\gamma^{\otimes\infty})
\cong
(W,\tau_W,d_H,\mu)
\]
in the sense of Definition~\ref{d:mmiso}, and 
\((W, \tau_W, d_H, \mu)\) satisfies
\(\mathrm{RCD}(1,\infty)\) and \(\mathrm{EVI}(1,\infty)\).
\end{prop}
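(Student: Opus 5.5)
The plan is to realise $\mu$ through an orthonormal basis of $H$ and a Karhunen--Lo\`eve type series. We fix an orthonormal basis $(e_n)_{n\ge1}$ of $H$ chosen of the form $e_n=h_{\ell_n}$ for some $\ell_n\in W^*$. This is possible because $\{h_\ell:\ell\in W^*\}$ is dense in $H$, since $H\hookrightarrow W$ is dense and injective, so Gram--Schmidt applied to a countable dense family of such $h_\ell$ stays inside $\{h_\ell\}$ by linearity of $\ell\mapsto h_\ell$. On $(W,\mu)$ the functionals $\ell_n$ are then i.i.d.\ standard Gaussians, because their covariances are $\langle e_n,e_m\rangle_H=\delta_{nm}$. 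Hence
\[
K_0(w):=(\ell_n(w))_{n\ge1}
\]
is a continuous linear map $W\to\mathbb R^\infty$ with $(K_0)_*\mu=\gamma^{\otimes\infty}$.

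Next we invoke the It\^o--Nisio theorem, or the standard convergence result for the series $\sum_n \xi_n e_n$ in $W$ (see \cite[Sec.~3.5]{Bo}). It gives
\[
X_0:=\Bigl\{a\in\mathbb R^\infty:\ \sum_{n=1}^N a_n e_n \text{ converges in } W\Bigr\},
\]
which is a Borel linear subspace with $\gamma^{\otimes\infty}(X_0)=1$. We define $J(a):=\sum_n a_ne_n\in W$, a Borel linear map on $X_0$, and we have $J_*\gamma^{\otimes\infty}=\mu$: the series converges a.s.\ to a centred Gaussian whose covariance is computed via each $\ell\in W^*$ as $\sum_n\ell(e_n)^2=\sum_n\langle h_\ell,e_n\rangle_H^2=\|h_\ell\|_H^2$. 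We also have $K_0\circ J=\mathrm{id}$ on $X_0$, since $\ell_m(e_n)=\langle e_n,e_m\rangle_H=\delta_{nm}$ and $\ell_m$ is continuous. We then set $W_0:=\{w: K_0(w)\in X_0,\ J(K_0(w))=w\}$, which is a Borel linear subspace. It has full $\mu$-measure because $J\circ K_0=\mathrm{id}$ $\mu$-a.e., which follows from martingale convergence of the conditional expectations of $w$ given $\ell_1,\dots,\ell_N$. Shrinking $X_0$ to $K_0(W_0)=J^{-1}(W_0)$ makes $J:X_0\to W_0$ and $K:=K_0|_{W_0}$ mutually inverse Borel linear bijections, and this set still has full measure since $J_*\gamma^{\otimes\infty}=\mu$.

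For the distance identity, linearity reduces it to $d_H(J(c),0)=d_{X_1}(c,0)$ for $c=a-b\in X_0$. If $c\in\ell^2$, then $J(c)=\sum_n c_ne_n$ converges in $H$ and hence in $W$, so $\|J(c)\|_H=\|c\|_{\ell^2}$. Conversely, suppose $J(c)=w\in H$. Then $c_n=\ell_n(w)=\langle w,e_n\rangle_H$, so $c\in\ell^2$. Therefore both sides are $+\infty$ simultaneously, and otherwise they agree. This proves $X_1\cong W$ in the sense of Definition~\ref{d:mmiso}.

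Finally, $(\mathbb R,|\cdot|,\gamma)$ satisfies $\RCD(1,\infty)$ and $\EVI(1,\infty)$ by the classical Bakry--\'Emery theory. Moreover, $X_1$ is exactly the infinite product $Y^{\times\infty}$ of this space with the $\ell^2$-type extended distance, so Corollary~\ref{c:G} gives $\RCD(1,\infty)$ and $\EVI(1,\infty)$ for $X_1$. Proposition~\ref{p:SI} then transfers both properties to $(W,\tau_W,d_H,\mu)$.

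The main obstacle is the measure-theoretic bookkeeping in the second step. We need a single full-measure Borel subspace on which $J$ and $K$ are genuinely mutually inverse. This hinges on the a.s.\ convergence of the series together with the identification $J\circ K_0=\mathrm{id}$ $\mu$-a.e., and both will be quoted from the Gaussian measure literature rather than reproved.
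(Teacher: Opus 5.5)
Your argument is correct, and its skeleton matches the paper's. You produce mutually inverse Borel linear maps between full-measure subspaces that extend a unitary $\ell^2\to H$ and its inverse. You read off the distance identity from this extension property; your Parseval argument with $c_n=\ell_n(J(c))=\langle J(c),e_n\rangle_H$ is exactly the paper's use of $K$ extending $U^{-1}$. You then conclude with Corollary~\ref{c:G} and Proposition~\ref{p:SI}.

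The difference is in how $J$ and $K$ are obtained. The paper quotes an abstract measurable linear extension theorem for an arbitrary unitary $U:H_{X_1}\to H$. It then uses uniqueness of measurable linear extensions, which gives $K\circ J=\mathrm{id}$ only almost everywhere. Finally it needs a Borel-realisation lemma to pass to Borel subspaces on which the two maps are genuinely inverse.

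You instead choose the basis $e_n=h_{\ell_n}$, and this buys three things:
\begin{itemize}
\item $K_0=(\ell_n)_n$ is continuous.
\item $K_0\circ J=\mathrm{id}$ holds on all of $X_0$ for free.
\item $X_0$ and $W_0$ can be written down explicitly as Borel linear subspaces.
\end{itemize}
The only nontrivial Gaussian input is the almost-everywhere convergence $\sum_n\ell_n(w)e_n\to w$ (Karhunen--Lo\`eve, or It\^o--Nisio). From it, together with $(K_0)_*\mu=\gamma^{\otimes\infty}$, both $\gamma^{\otimes\infty}(X_0)=1$ and $J_*\gamma^{\otimes\infty}=(J\circ K_0)_*\mu=\mu$ follow directly. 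Your route is therefore more explicit and self-contained, at the cost of a specific choice of basis. The paper's route is shorter but relies on heavier black boxes.

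Two small remarks. First, your final ``shrinking'' step is vacuous: $K_0\circ J=\mathrm{id}_{X_0}$ already forces $J(X_0)\subset W_0$ and $K_0(W_0)=X_0$. Second, to meet Definition~\ref{d:mmiso} literally you should extend $J$ to all of $X_1$ by a constant outside $X_0$, as the paper does with $\widehat J$.
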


\begin{proof}
Since \(H\) is a separable infinite-dimensional Hilbert space, we can choose a unitary isomorphism
\(
U:H_{X_1}\longrightarrow H.
\)
By the measurable extension theorem \cite[Chapter~2, Theorem~6]{Ba}, the unitary map
\(U\) admits a linear measurable extension
\(
J:X_1\longrightarrow W.
\)
Applying the same theorem to \(U^{-1}:H\to H_{X_1}\), we obtain a linear measurable extension
\(
K:W\longrightarrow X_1.
\)
Moreover,  these extensions preserve the corresponding Gaussian measures:
\[
J_*(\gamma^{\otimes\infty})=\mu,
\qquad
K_*\mu=\gamma^{\otimes\infty}.
\]
By the uniqueness of measurable linear extensions up to null sets, we also have
\[
K(J(a))=a
\qquad
\gamma^{\otimes\infty}\text{-a.e. }a\in X_1.
\]
By the Borel-realisation lemma~\cite[Chapter~2, Lemma~2]{Ba}, after restricting to Borel vector subspaces of full measure, we
may assume that there are Borel vector subspaces
\[
X_0\subset X_1,
\qquad
W_0\subset W
\]
with
\[
\gamma^{\otimes\infty}(X_0)=1,
\qquad
\mu(W_0)=1,
\]
such that \(J|_{X_0}:X_0\to W_0\) and \(K|_{W_0}:W_0\to X_0\) are Borel inverse linear maps.
Replacing \(X_0\) and \(W_0\) by these full-measure subspaces, we do not relabel and use the same notation
\(
J:X_0\to W_0\) and
\(K:W_0\to X_0.
\)

We claim that \(J\) preserves the Cameron--Martin extended distances on \(X_0\). Let
\(a,b\in X_0\). If \(a-b\in H_{X_1}\), then, as \(J\) extends \(U\),
\[
J(a)-J(b)=J(a-b)=U(a-b)\in H,
\]
therefore
\[
d_H(J(a),J(b))
=
\|U(a-b)\|_H
=
\|a-b\|_{H_{X_1}}
=
d_{X_1}(a,b).
\]
Conversely, suppose that
\[
d_H(J(a),J(b))<\infty.
\]
Then \(J(a)-J(b)\in H\). Since \(K\) extends \(U^{-1}\), we obtain
\[
a-b
=
K(J(a))-K(J(b))
=
K(J(a)-J(b))
=
U^{-1}(J(a)-J(b))
\in H_{X_1}.
\]
Thus \(d_H(J(a),J(b))<\infty\) if and only if \(d_{X_1}(a,b)<\infty\), and in that case,  the
two distances are equal. Hence
\[
d_H(J(a),J(b))=d_{X_1}(a,b)
\qquad
\forall a,b\in X_0.
\]

Since \(J_*(\gamma^{\otimes\infty})=\mu\), the Borel map~$J$ is measure preserving and distance preserving. Fix \(w_*\in W\), and define
\[
\widehat J(a):=
\begin{cases}
J(a), & a\in X_0,\\
w_*, & a\notin X_0.
\end{cases}
\]
Then \(\widehat J:X_1\to W\) is Borel, \((\widehat J)_*(\gamma^{\otimes\infty})=\mu\),
and \(\widehat J\) preserves the extended distances on \(X_0\).
Therefore
\[
X_1\cong W
\]
in the sense of Definition~\ref{d:mmiso}. The latter statement follows from~Corollary~\ref{c:G}, Proposition~\ref{p:SI} and the $\RCD(1,\infty)$ property for the one-dimensional Gaussian space~$(\R, d_\R, \gamma)$.
\end{proof}

\subsection{Quotient spaces}
%
%
%
%
%

Let \(Y=(Y,d_Y,\mu_Y)\) be a metric measure space. Assume that a compact group \(G\)
acts continuously on \(Y\) by measure-preserving isometries. Let
\(
X:=Y^{\times\infty}
\)
be the infinite product extended metric measure space, equipped with the product topology,
the product measure \(\mu_X:=\mu_Y^{\otimes\infty}\), and the extended distance
\[
d_X(x,y)^2:=\sum_{i=1}^\infty d_Y(x_i,y_i)^2,
\qquad
x=(x_i)_{i=1}^\infty,\quad y=(y_i)_{i=1}^\infty.
\]
Let \(G\) act diagonally on \(X\), viz.~
\[
g x := (g x_i)_{i=1}^\infty.
\]
Define
\(
Z:=G\backslash X
\)
and equip \(Z\) with the quotient extended distance
\(
d_Z([x],[y]) := \inf_{g\in G} d_X(x,gy)
\)
and the quotient measure
\(
\mu_Z := (q_\infty)_*\mu_X,
\)
where \(q_\infty:X\to G\backslash X\) is the quotient map. We equip \(Z\) with the quotient topology. Since \(G\) is compact and acts continuously on the
Polish space \(X\), the orbit space \(G\backslash X\) is Polish. Moreover, the quotient
distance \(d_Z\) is complete and lower semicontinuous with respect to the quotient topology
since \(d_X\) is complete and the action is by isometries. Indeed, the compactness of \(G\) implies that orbit representatives can be chosen along convergening
subsequences, so the lower semicontinuity and the completeness pass from \(d_X\) to the quotient distance.

For each \(n\in\mathbb N\), set
\[
Z_n:=G\backslash Y^{\times n},
\]
where \(G\) acts diagonally on \(Y^{\times n}\). Equip \(Z_n\) with the quotient distance
\[
d_{Z_n}([y],[z]) := \inf_{g\in G} d_{Y^{\times n}}(y,gz),
\]
where
\[
d_{Y^{\times n}}(y,z)^2:=\sum_{i=1}^n d_Y(y_i,z_i)^2,
\]
and with the quotient measure
\[
\mu_{Z_n}:=(q_n)_*\mu_Y^{\otimes n},
\]
where \(q_n:Y^{\times n}\to G\backslash Y^{\times n}\) is the quotient map.
Let
\[
r_n:Z_{n+1}\to Z_n
\]
be the map forgetting the last coordinate. 

\begin{thm} \label{t:CQ}
In the above setting,  \(\{(Z_n,r_n)\}_{n\in\mathbb N}\)
is an inverse system of metric measure spaces. Moreover,
\[
Z \cong \varprojlim Z_n
\]
in the sense of Definition~\ref{d:mmiso}.
In particular, if each \(Z_n\) satisfies one of the stability properties considered in Theorem~\ref{t:mthm},
then the same property holds for \(Z\) by Theorem~\ref{t:mthm} and
Proposition~\ref{p:SI}.
\end{thm}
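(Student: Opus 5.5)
The plan has three parts: check the inverse-system structure of the $Z_n$, construct an explicit map from $Z$ to $\varprojlim Z_n$, and identify the limit distance with $d_Z$.

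\textbf{Step 1: the $Z_n$ form an inverse system.} Each $Z_n=G\backslash Y^{\times n}$ is an mm-space. Since $G$ is compact and acts by isometries, $d_{Z_n}([y],[z])=\min_{g\in G}d_{Y^{\times n}}(y,gz)$ is a genuine metric on the orbit space: the minimum is attained, so zero distance forces equal orbits. It is complete and separable, its topology is the quotient topology, and $\mu_{Z_n}$ is a Borel probability measure with full support, because $\mu_Y^{\otimes n}$ has full support and $q_n$ is continuous and open.

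The forgetful map $r_n([y_1,\dots,y_{n+1}])=[y_1,\dots,y_n]$ is well defined because the action is diagonal. It is $1$-Lipschitz because dropping a coordinate can only decrease $d_{Y^{\times(n+1)}}(y,gz)$ for each fixed $g$. Finally, $(r_n)_*\mu_{Z_{n+1}}=\mu_{Z_n}$ because $r_n\circ q_{n+1}=q_n\circ \mathrm{pr}_{n+1,n}$ and the product measure projects onto the product measure. Hence $Z_n\prec Z_{n+1}$.

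\textbf{Step 2: the comparison map.} Define $\Theta:Z\to\varprojlim Z_n$ by $\Theta([x])=([x_1,\dots,x_n])_{n\in\N}$. This is well defined and continuous from the quotient topology to the product topology, hence Borel. It is in fact a bijection, using a compactness argument on $G$, though injectivity is not needed. Its $n$-th marginal is
$$(\pi_n)_*\Theta_*\mu_Z=(q_n)_*(\mathrm{pr}_n)_*\mu_X=\mu_{Z_n},$$
so uniqueness of the projective limit measure (Definition~\ref{def;invlimmeas}) gives $\Theta_*\mu_Z=\mu_{\varprojlim Z_n}$.

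\textbf{Step 3: distance preservation (the main obstacle).} It remains to show, for all $[x],[y]\in Z$,
$$\sup_n \min_{g\in G} d_{Y^{\times n}}(x|_n,gy|_n)=\inf_{g\in G}d_X(x,gy),$$
which gives the isomorphism with $X_0=Z$. The inequality "$\le$" is immediate, since $d_{Y^{\times n}}(x|_n,gy|_n)\le d_X(x,gy)$ for every $g$.

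For "$\ge$", suppose the left side equals $L<\infty$. For each $n$ choose $g_n$ attaining the minimum. By compactness of $G$, a subsequence satisfies $g_{n_k}\to g$. Fix $m$. The function $h\mapsto d_{Y^{\times m}}(x|_m,hy|_m)$ is continuous, since the action is continuous and $m$ is finite. Therefore
$$d_{Y^{\times m}}(x|_m,gy|_m)=\lim_k d_{Y^{\times m}}(x|_m,g_{n_k}y|_m)\le \liminf_k d_{Y^{\times n_k}}(x|_{n_k},g_{n_k}y|_{n_k})\le L.$$
Letting $m\to\infty$ gives $d_X(x,gy)\le L$, hence $d_Z([x],[y])\le L$. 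The key point is that the limit is taken in each finite-coordinate truncation before passing to the supremum, which avoids any continuity of $d_X$ in $g$; we only use its monotone-limit structure.

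\textbf{Conclusion.} $\Theta$ is a Borel, measure-preserving map that preserves distances everywhere, so $Z\cong\varprojlim Z_n$ in the sense of Definition~\ref{d:mmiso}. This relies on $Z$ being a Polish extended mm-space, as asserted before the theorem. The final statement then follows by applying Theorem~\ref{t:mthm} to $\varprojlim Z_n$ and transferring the property to $Z$ via Proposition~\ref{p:SI} and the symmetry of $\cong$ (Proposition~\ref{p:S}).
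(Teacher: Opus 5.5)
Your proposal is correct and follows essentially the same route as the paper: the same forgetful maps $r_n$, the same comparison map $[x]\mapsto([x_1,\dots,x_n])_{n}$, the same marginal-uniqueness argument for the measure, and the same compactness argument (minimizers $g_n$, a convergent subsequence, finite truncations, then the supremum) for the distance identity. The differences are minor. You get Borel measurability from continuity of $\Theta$ via the universal property of the quotient topology, whereas the paper appeals to the standard Borel structure of the orbit space. You also rightly skip the bijectivity proof, which Definition~\ref{d:mmiso} does not require.
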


\begin{proof}
We first check that \(\{Z_n,r_n\}_{n\in\mathbb N}\) is an inverse system. The map
\[
r_n:G\backslash Y^{\times(n+1)}\to G\backslash Y^{\times n}
\]
is defined by
\[
r_n([y_1,\dots,y_n,y_{n+1}]) := [y_1,\dots,y_n].
\]
This is well-defined because the \(G\)-action is diagonal. It is also \(1\)-Lipschitz. Indeed, for
\(y,z\in Y^{\times(n+1)}\),
\[
\begin{aligned}
d_{Z_n}(r_n([y]),r_n([z]))
&=
\inf_{g\in G}
d_{Y^{\times n}}\bigl((y_1,\dots,y_n),g(z_1,\dots,z_n)\bigr) \\
&\le
\inf_{g\in G}
d_{Y^{\times(n+1)}}(y,gz) \\
&=
d_{Z_{n+1}}([y],[z]).
\end{aligned}
\]
Moreover, \(r_n\) is measure-preserving as 
\[
r_n\circ q_{n+1}=q_n\circ \operatorname{pr}_{n,n+1},
\]
where \(\operatorname{pr}_{n,n+1}:Y^{\times(n+1)}\to Y^{\times n}\) is the projection onto the
first \(n\) coordinates, and
\[
(\operatorname{pr}_{n,n+1})_*\mu_Y^{\otimes(n+1)}=\mu_Y^{\otimes n}.
\]
Hence
\[
(r_n)_*\mu_{Z_{n+1}}=\mu_{Z_n}.
\]

Let
\(
Z_\infty:=\varprojlim Z_n
\)
be the inverse limit. We define a map
\(
\Phi:Z\to Z_\infty
\)
as follows. For \(x=(x_i)_{i=1}^\infty\in X\), set
\[
\Phi([x])
:=
\bigl([x_1],[x_1,x_2],\dots,[x_1,\dots,x_n],\dots\bigr).
\]
This is well-defined. Indeed, if \(x'=gx\) for some \(g\in G\), then
\[
[x'_1,\dots,x'_n]=[gx_1,\dots,gx_n]=[x_1,\dots,x_n]
\]
for every \(n\). The sequence \(\Phi([x])\) is compatible with the bonding maps \(r_n\). 
Thus, it belongs to \(Z_\infty\).

We next prove that \(\Phi\) is bijective.
First, suppose that
\[
\Phi([x])=\Phi([y]).
\]
Then, for every \(n\in\mathbb N\), there exists \(g_n\in G\) such that
\[
(x_1,\dots,x_n)=g_n(y_1,\dots,y_n).
\]
Since \(G\) is compact, there exists a subsequence \(\{g_{n_k}\}_{k=1}^\infty\) converging to some
\(g\in G\). Fix \(j\in\mathbb N\). For sufficiently large \(k\), we have \(n_k\ge j\). Thus, 
\[
x_j=g_{n_k}y_j.
\]
By the continuity of the \(G\)-action,
\[
x_j=gy_j.
\]
Since \(j\) was arbitrary, \(x=gy\). Therefore \([x]=[y]\), and \(\Phi\) is injective.

We now prove the surjectivity. Let
\(
z=(z_n)_{n=1}^\infty\in Z_\infty.
\)
We shall construct \(x=(x_i)_{i=1}^\infty\in X\) such that \(\Phi([x])=z\).
Choose a representative
\(
y^{(1)}_1\in Y
\)
of \(z_1\). Suppose that for some \(n\ge1\) we choose  a representative
\(
(y^{(n)}_1,\dots,y^{(n)}_n)\in Y^{\times n}
\)
of \(z_n\). Choose any representative
\(
(a_1,\dots,a_n,a_{n+1})\in Y^{\times(n+1)}
\)
of \(z_{n+1}\). Since
\(
r_n(z_{n+1})=z_n,
\)
we have
\[
[(a_1,\dots,a_n)]=[(y^{(n)}_1,\dots,y^{(n)}_n)]
\]
in \(G\backslash Y^{\times n}\). Hence there exists \(h_n\in G\) such that
\[
(h_n a_1,\dots,h_n a_n)=(y^{(n)}_1,\dots,y^{(n)}_n).
\]
Define
\[
(y^{(n+1)}_1,\dots,y^{(n+1)}_{n+1})
:=
(h_n a_1,\dots,h_n a_n,h_n a_{n+1}).
\]
Then this is a representative of \(z_{n+1}\), and its first \(n\) coordinates coincide with
\((y^{(n)}_1,\dots,y^{(n)}_n)\). By induction, we obtain compatible representatives. Define
\(
x_i:=y^{(i)}_i.
\)
Then
\[
(x_1,\dots,x_n)=(y^{(n)}_1,\dots,y^{(n)}_n)
\]
represents \(z_n\) for every \(n\). Thus \(\Phi([x])=z\), and \(\Phi\) is surjective.

We now prove that \(\Phi\) preserves the extended distances. Let \(x,y\in X\). By definition of
the inverse-limit distance on \(Z_\infty\),
\[
d_{Z_\infty}(\Phi([x]),\Phi([y]))
=
\sup_{n\in\mathbb N}
d_{Z_n}([x_1,\dots,x_n],[y_1,\dots,y_n]).
\]
Set
\[
D:=
\sup_{n\in\mathbb N}
d_{Z_n}([x_1,\dots,x_n],[y_1,\dots,y_n]).
\]
For every \(g\in G\) and every \(n\in\mathbb N\),
\[
d_{Z_n}([x_1,\dots,x_n],[y_1,\dots,y_n])
\le
d_{Y^{\times n}}\bigl((x_1,\dots,x_n),g(y_1,\dots,y_n)\bigr)
\le
d_X(x,gy).
\]
Taking the supremum over \(n\) and then the infimum over \(g\in G\), we obtain
\[
D\le d_Z([x],[y]).
\]

Conversely, if \(D=+\infty\), there is nothing to prove. Assume \(D<+\infty\). For each
\(n\in\mathbb N\), since \(G\) is compact and the function
\[
g\mapsto d_{Y^{\times n}}\bigl((x_1,\dots,x_n),g(y_1,\dots,y_n)\bigr)
\]
is continuous, there exists \(g_n\in G\) such that
\[
d_{Z_n}([x_1,\dots,x_n],[y_1,\dots,y_n])
=
d_{Y^{\times n}}\bigl((x_1,\dots,x_n),g_n(y_1,\dots,y_n)\bigr).
\]
By the compactness of \(G\),  passing to a subsequence, we may assume that
\[
g_{n_k}\to g
\]
for some \(g\in G\). Fix \(N\in\mathbb N\). For all sufficiently large \(k\), we have \(n_k\ge N\), hence
\[
\begin{aligned}
d_{Y^{\times N}}\bigl((x_1,\dots,x_N),g_{n_k}(y_1,\dots,y_N)\bigr)
&\le
d_{Y^{\times n_k}}\bigl((x_1,\dots,x_{n_k}),g_{n_k}(y_1,\dots,y_{n_k})\bigr) \\
&=
d_{Z_{n_k}}([x_1,\dots,x_{n_k}],[y_1,\dots,y_{n_k}]) \\
&\le D.
\end{aligned}
\]
Letting \(k\to\infty\) gives
\[
d_{Y^{\times N}}\bigl((x_1,\dots,x_N),g(y_1,\dots,y_N)\bigr)\le D.
\]
Taking the supremum over \(N\), we obtain
\(
d_X(x,gy)\le D.
\)
Therefore
\(
d_Z([x],[y])\le d_X(x,gy)\le D.
\)
Combining the two inequalities, we obtain
\[
d_Z([x],[y])
=
d_{Z_\infty}(\Phi([x]),\Phi([y])).
\]

It remains to check the measure-preserving property. Let
\(
\operatorname{Pr}_n:Z_\infty\to Z_n
\)
be the canonical projection. Then, for every \(n\),
\[
\operatorname{Pr}_n\circ\Phi\circ q_\infty
=
q_n\circ \operatorname{pr}_n,
\]
where \(\operatorname{pr}_n:X\to Y^{\times n}\) is the projection onto the first \(n\) coordinates.
Hence
\[
(\operatorname{Pr}_n)_*\Phi_*\mu_Z
=
(q_n)_*(\operatorname{pr}_n)_*\mu_X
=
(q_n)_*\mu_Y^{\otimes n}
=
\mu_{Z_n}.
\]
Since the inverse-limit measure on \(Z_\infty\) is uniquely characterized by its finite-dimensional
marginals \(\mu_{Z_n}\), we obtain
\[
\Phi_*\mu_Z=\mu_{Z_\infty}.
\]

Finally, \(\Phi\) is Borel. Indeed,
\(
\Phi\circ q_\infty
=
\bigl(q_n\circ \operatorname{pr}_n\bigr)_{n=1}^\infty
\)
is Borel and \(G\)-invariant. Since a continuous action of a compact group on a Polish
space has a standard Borel orbit space, the induced map \(\Phi:G\backslash X\to Z_\infty\) is Borel.

We have shown that \(\Phi\) is Borel, measure preserving, and distance preserving everywhere.
Therefore
\[
Z\cong Z_\infty= \varprojlim Z_n
\]
in the sense of Definition~\ref{d:mmiso}. The proof is complete.
\end{proof}

\begin{cor} \label{c:SER}
In the setting of the previous theorem, assume in addition that \(G\) is a compact Lie group and
that \(Y\) satisfies \(\RCD(K,\infty)\). Then
\[
Z=G\backslash Y^{\times\infty}
\]
satisfies \(\RCD(K,\infty)\) as well as \(\EVI(K,\infty)\).
\end{cor}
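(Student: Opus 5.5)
The plan is to reduce Corollary~\ref{c:SER} to Theorem~\ref{t:CQ} combined with Theorem~\ref{t:mthm} and Proposition~\ref{p:SI}. We show that every finite quotient $Z_n=G\backslash Y^{\times n}$ satisfies $\RCD(K,\infty)$ and $\EVI(K,\infty)$. Theorem~\ref{t:CQ} says $\{(Z_n,r_n)\}$ is an inverse system of mm-spaces with $Z\cong\varprojlim Z_n$. Then Corollary~\ref{c:RCD} and Theorem~\ref{t:EVI}, applied with $K_n\equiv K$, give $\RCD(K,\infty)$ and $\EVI(K,\infty)$ on $\varprojlim Z_n$. Finally Proposition~\ref{p:SI} transfers both properties to $Z$.

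\textbf{Step 1: finite products.} We show that $Y^{\times n}$, with the $\ell^2$-product distance and the product measure, satisfies $\RCD(K,\infty)$ and hence, since it is an ordinary mm-space, $\EVI(K,\infty)$. This is the tensorisation property of $\RCD(K,\infty)$ from \cite[Theorem~6.13]{AGS;D}, applied inductively; it is the same result already invoked in Corollary~\ref{c:G}. We also record the standard fact that in the non-extended setting $\RCD(K,\infty)$ and $\EVI(K,\infty)$ are equivalent, because the heat flow is then the $\EVI_K$-gradient flow of the entropy. This lets us work with either formulation on each $Z_n$.

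\textbf{Step 2: compact group quotients.} We prove that $(G\backslash Y^{\times n},d_{Z_n},\mu_{Z_n})$ satisfies $\RCD(K,\infty)$. This is the step I expect to be the main obstacle. The $\CD(K,\infty)$ part I would do directly. For $\nu_0,\nu_1\ll\mu_{Z_n}$, lift them to $G$-invariant measures $\tilde\nu_i$ on $Y^{\times n}$ by averaging over Haar measure on fibres; the lifts have the same relative entropy. The identity $W_2(\nu_0,\nu_1)=W_2(\tilde\nu_0,\tilde\nu_1)$, the standard quotient-Wasserstein identity for isometric actions of compact groups, should hold after choosing optimal plans between fibres measurably. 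Next take a $\CD$ geodesic upstairs and average it over $G$. The entropy is convex along this averaging, and $W_2$ does not increase under the projection. Pushing the averaged curve down then gives the $\CD$ inequality. The more delicate point is infinitesimal Hilbertianity of the quotient. Here I would use that $G$-invariant functions on $Y^{\times n}$ correspond to functions on $Z_n$, with $|D(u\circ q_n)|=|Du|\circ q_n$ a.e. The slope identity follows since $q_n$ is a submetry. The Cheeger energy of $Z_n$ is then the restriction of the quadratic Cheeger energy of $Y^{\times n}$ to the closed subspace of invariant functions, so the parallelogram law descends. This is where the Lie hypothesis is convenient: smooth measurable slices or a principal orbit type reduce regularity issues. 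Alternatively, I would cite the known result (Galaz-Garc\'ia--Kell--Mondino--Sosa) that quotients of $\RCD(K,\infty)$ spaces by compact Lie group actions by measure-preserving isometries are $\RCD(K,\infty)$.

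\textbf{Step 3: conclusion.} Combining Steps 1 and 2, each $Z_n$ is an $\RCD(K,\infty)$ mm-space and therefore also satisfies $\EVI(K,\infty)$. Theorem~\ref{t:CQ} places us in the inverse-limit setting. Corollary~\ref{c:RCD} gives $\RCD(K,\infty)$ and Theorem~\ref{t:EVI} gives $\EVI(K,\infty)$ for $\varprojlim Z_n$. Proposition~\ref{p:SI} applied to $Z\cong\varprojlim Z_n$ completes the argument.
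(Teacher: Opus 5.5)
Your proposal is correct and follows the paper's proof essentially step for step. Tensorisation gives $\RCD(K,\infty)$ on $Y^{\times n}$, the Galaz-Garc\'{\i}a--Kell--Mondino--Sosa quotient theorem passes it to $Z_n$, and the Ambrosio--Gigli--Savar\'e equivalence in the non-extended setting gives $\EVI(K,\infty)$ on $Z_n$. Theorem~\ref{t:CQ}, Corollary~\ref{c:RCD}, Theorem~\ref{t:EVI} and Proposition~\ref{p:SI} then finish the argument.

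The direct quotient argument you sketch in Step~2 is not needed, since the paper simply cites the quotient theorem. As written, its infinitesimal-Hilbertianity part would also need a $G$-averaging of the Lipschitz approximants, to pass from the pointwise slope identity under the submetry to the relaxed gradients. Citing the theorem is therefore the cleaner route.
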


\begin{proof}
For each \(n\in\mathbb N\), the product space \(Y^{\times n}\), equipped with the product
distance and product measure, satisfies \(\RCD(K,\infty)\) by tensorisation. The diagonal
action of \(G\) on \(Y^{\times n}\) is by measure-preserving isometries. Hence, by the quotient
stability theorem in~\cite[Theorem 6.2]{GKMS}, the quotient
\(
Z_n:=G\backslash Y^{\times n}
\)
satisfies \(\RCD(K,\infty)\) for every \(n\in\mathbb N\).

By Theorem~\ref{t:CQ},
\(
Z\cong \varprojlim  Z_n
\)
in the sense of Definition~\ref{d:mmiso}. Since each \(Z_n\) satisfies \(\RCD(K,\infty)\),
Corollary~\ref{c:RCD} implies that \(\varprojlim  Z_n\) satisfies \(\RCD(K,\infty)\).
Finally, Proposition~\ref{p:SI} transfers the \(\RCD(K,\infty)\) property to \(Z\).

The \(\EVI(K,\infty)\) statement follows similarly. Since \(Z_n\) is an ordinary metric measure space satisfying \(\RCD(K,\infty)\), the 
equivalence between \(\RCD(K,\infty)\) and the \(\EVI_K\)-formulation of the heat flow in \cite{AGS;D} gives
\(\EVI(K,\infty)\) on \(Z_n\).
Thus Theorem~\ref{t:EVI} gives \(\EVI(K,\infty)\) for \(\varprojlim Z_n\), and
Proposition~\ref{p:SI} transfers it to \(Z\).
\end{proof}

\subsection{Path space and its quotient}
Let
\(
Y_m := (\mathbb{R}^m,d_{\mathbb{R}^m},\gamma_m),
\)
where \(d_{\mathbb{R}^m}\) is the Euclidean distance on \(\mathbb{R}^m\) and \(\gamma_m\) is the
centred Gaussian measure with covariance equal to the identity matrix. Let
\[
X_m := Y_m^{\times \infty}
\]
be the infinite product extended metric measure space.
Let
\[
W_m := \bigl(C([0,1],\mathbb{R}^m),\tau_u,d_{H_m},\mu_m\bigr),
\]
where \(\tau_u\) is the uniform topology,
\[
H_m := H^{1,2}_0([0,1];\mathbb{R}^m),
\qquad
d_{H_m}(w,v) :=
\begin{cases}
\|w-v\|_{H_m}, & w-v\in H_m,\\
\infty, & \text{otherwise},
\end{cases}
\]
and \(\mu_m\) is the law of the standard Brownian motion on \(\mathbb{R}^m\) starting at \(0\).

Let \(G\subset O(m)\) be a compact subgroup. The group~\(G\) acts diagonally on \(X_m\) by
\[
t\{x_n\}_{n=1}^\infty := \{t x_n\}_{n=1}^\infty
\qquad (t\in G),
\]
and pointwise on \(W_m\) by
\[
(tw)(s) := t(w(s))
\qquad (t\in G,\ w\in W_m,\ s\in[0,1]).
\]

Let \(p_X:X_m\to G\backslash X_m\) and \(p_W:W_m\to G\backslash W_m\) be the quotient maps.
Equip \(G\backslash X_m\) and \(G\backslash W_m\) with the quotient extended distances
\[
d_{G\backslash X_m}([x],[y]) := \inf_{t\in G} d_{X_m}(x,ty),
\qquad
d_{G\backslash W_m}([w],[v]) := \inf_{t\in G} d_{H_m}(w,tv),
\]
and the quotient measures
\[
\mu_{G\backslash X_m} := (p_X)_*(\gamma_m^{\otimes \infty}),
\qquad
\mu_{G\backslash W_m} := (p_W)_*(\mu_m).
\]
%

In the following lemma, we use the case \(m=1\) of the above notation. Thus \(X_1\) is the
one-dimensional infinite product Gaussian space and \(W_1\) is the one-dimensional classical
Wiener space. We denote by
\[
H_{X_1}:=
\left\{
a=(a_n)_{n=1}^\infty\in \mathbb R^\infty:
\sum_{n=1}^\infty |a_n|^2<\infty
\right\}
\]
the Cameron--Martin space of \(X_1\), equipped with the norm
\[
\|a\|_{H_{X_1}}^2:=\sum_{n=1}^\infty |a_n|^2 .
\]

\begin{thm} \label{t:WPE}
Under the above setting, 
\[
(G\backslash X_m,d_{G\backslash X_m},\mu_{G\backslash X_m})
\cong
(G\backslash W_m,d_{G\backslash W_m},\mu_{G\backslash W_m})
\]
 in the sense of Definition \ref{d:mmiso}. In particular,
\(G\backslash W_m\) satisfies
\(\mathrm{RCD}(1,\infty)\) and \(\mathrm{EVI}(1,\infty)\).

%
%
%
\end{thm}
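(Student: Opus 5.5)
Our plan is to build a $G$-equivariant isomorphism $X_m\cong W_m$ upstairs, in the sense of Definition~\ref{d:mmiso}, and then pass it to the quotients. For the upstairs isomorphism we use the Lévy--Ciesielski (Karhunen--Loève) expansion. Fix an orthonormal basis $(e_n)_{n\ge1}$ of $L^2([0,1];\mathbb R)$ and set $h_n(s):=\int_0^s e_n(r)\,dr$; then $(h_n)$ is an orthonormal basis of $H_1=H^{1,2}_0([0,1];\mathbb R)$. For $x=(x_n)\in X_m=(\mathbb R^m)^{\infty}$ we put
\[
J(x)(s):=\sum_{n=1}^\infty h_n(s)\,x_n\in\mathbb R^m .
\]
By the Itô--Nisio theorem this series converges uniformly on $[0,1]$ for $\gamma_m^{\otimes\infty}$-a.e.\ $x$. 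We let $X_0$ be the Borel set of convergence and map its complement to a fixed path. This gives a Borel map with $J_*\gamma_m^{\otimes\infty}=\mu_m$.

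This $J$ is exactly the measurable linear extension of the unitary map $U:\ell^2(\mathbb N;\mathbb R^m)\to H_m$, $U(a)=\sum_n h_n a_n$. So the argument of Proposition~\ref{p:AWS-prod}, applied coordinatewise with $H_{X_m}=\ell^2(\mathbb N;\mathbb R^m)$, should give two things after shrinking to full-measure Borel vector subspaces. First, $d_{H_m}(J(a),J(b))=d_{X_m}(a,b)$ for $a,b\in X_0$. Second, $J$ has a Borel inverse $K(w)=(\int_0^1 e_n\,dw)_n$, given by Wiener integrals.

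The key new point is equivariance. For $t\in G\subset O(m)$, linearity of $J$ in each $x_n$ gives $J(tx)=tJ(x)$ wherever the series converges. The convergence set is $G$-invariant because $t$ is an isometry of $\mathbb R^m$. We therefore take $X_0$ to be $G$-invariant, for instance by replacing it with $\bigcap_{t\in G}tX_0$; since the convergence set itself is already invariant, this can be arranged directly. We also use that $G$ preserves both $\gamma_m^{\otimes\infty}$ and $\mu_m$, and preserves both extended distances.

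Next we descend to the quotients. We define $\bar J:G\backslash X_m\to G\backslash W_m$ by $\bar J([x])=[J(x)]$ for $x\in X_0$ and by a constant otherwise. This is well defined by equivariance. It is Borel because both orbit spaces are standard Borel (compact group acting on a Polish space, as in the proof of Theorem~\ref{t:CQ}), and $p_W\circ J$ is a $G$-invariant Borel map. It pushes $\mu_{G\backslash X_m}$ to $\mu_{G\backslash W_m}$ because $p_W\circ J=\bar J\circ p_X$ and $J_*\gamma_m^{\otimes\infty}=\mu_m$. For the distance, take $x,y\in X_0$. Since $ty\in X_0$ by invariance,
\[
d_{G\backslash W_m}([Jx],[Jy])=\inf_{t}d_{H_m}(Jx,tJy)=\inf_t d_{H_m}(Jx,J(ty))=\inf_t d_{X_m}(x,ty)=d_{G\backslash X_m}([x],[y]).
\]
This identity holds on the Borel set $p_X(X_0)$. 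This set has full measure because $X_0$ is $G$-invariant, so $p_X^{-1}(p_X(X_0))=X_0$; its Borelness follows from $X_0$ being an invariant Borel set in a standard quotient. This establishes $\cong$.

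For the curvature conclusion, Theorem~\ref{t:CQ} with $Y=Y_m$ gives $G\backslash X_m\cong\varprojlim Z_n$, where $Z_n=G\backslash Y_m^{\times n}$. The Gaussian space $Y_m$ is $\RCD(1,\infty)$, and $G$ is a compact subgroup of $O(m)$, hence a compact Lie group. Corollary~\ref{c:SER} then yields $\RCD(1,\infty)$ and $\EVI(1,\infty)$ for $G\backslash X_m$, and Proposition~\ref{p:SI} transports both to $G\backslash W_m$.

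The main obstacle is the measure-theoretic bookkeeping in the equivariance step. We need a single $G$-invariant Borel full-measure set on which three things hold at once: $J$ is linear, $J$ is injective with the Wiener-integral inverse, and the distance identity holds. One route is to define $X_0$ via the convergence of the series together with $K(J(x))=x$; both conditions are $G$-invariant since $K(tw)=tK(w)$. This gives invariance without taking an uncountable intersection.
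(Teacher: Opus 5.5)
Your proposal is correct and takes essentially the same route as the paper. Both build a $G$-equivariant, linear, measure-preserving map $X_m\to W_m$ that preserves distances on a full-measure Borel vector subspace, using the measurable linear extension of the unitary Cameron--Martin map, then descend it to the orbit spaces and conclude with Corollary~\ref{c:SER} and Proposition~\ref{p:SI}. The paper gets this map abstractly from Proposition~\ref{p:AWS-prod} for $m=1$, taken coordinatewise, whereas you write it explicitly as the Karhunen--Lo\`eve series with $\mathbb R^m$-valued coefficients, which amounts to the same map. One small difference is in your favour: you obtain Borelness of the full-measure set $p_X(X_0)$ directly from the $G$-invariance of $X_0$, while the paper passes to a Borel subset of the analytic set $p_X(X_{m,0})$.
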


\begin{proof}
We construct an isomorphism between \(G\backslash X_m\) and \(G\backslash W_m\).
We identify \(X_m\) with \(X_1^{\times m}\) by rearranging coordinates, and identify
\(W_m\) with \(W_1^{\times m}\) by taking coordinate functions.
Indeed, for
\[
x=\{x_n\}_{n=1}^{\infty}\in X_m,
\qquad
x_n=(x_n^{(1)},\ldots,x_n^{(m)})\in\mathbb R^m,
\]
we write
\[
x^{(j)}:=\{x_n^{(j)}\}_{n=1}^{\infty}\in X_1,
\qquad
j=1,\ldots,m.
\]
Similarly, for \(w\in W_m=C([0,1],\mathbb R^m)\), we write
\[
w=(w^{(1)},\ldots,w^{(m)}),
\qquad
w^{(j)}\in W_1.
\]
Under these identifications, the measures satisfy
\[
\gamma_m^{\otimes\infty}
=
(\gamma^{\otimes\infty})^{\otimes m},
\qquad
\mu_m=\mu_1^{\otimes m}.
\]

Since the classical Wiener space \((W_1,H_1,\mu_1)\) is an abstract Wiener space, 
Proposition~\ref{p:AWS-prod} applied to \((W_1,H_1,\mu_1)\) gives Borel linear subspaces
\(
X_{1,0}\subset X_1\) and \(
W_{1,0}\subset W_1
\)
with
\[
\gamma^{\otimes\infty}(X_{1,0})=1,
\qquad
\mu_1(W_{1,0})=1,
\]
and Borel linear inverse maps
\(
J_1:X_{1,0}\longrightarrow W_{1,0}\) and 
\(K_1:W_{1,0}\longrightarrow X_{1,0}
\)
such that
\[
(J_1)_*(\gamma^{\otimes\infty})=\mu_1
\]
and
\[
d_{H_1}(J_1(a),J_1(b))=d_{X_1}(a,b)
\qquad
\forall a,b\in X_{1,0}.
\]
Set
\[
X_{m,0}:=X_{1,0}^{\times m},
\qquad
W_{m,0}:=W_{1,0}^{\times m}.
\]
Then
\[
\gamma_m^{\otimes\infty}(X_{m,0})=1,
\qquad
\mu_m(W_{m,0})=1.
\]
Define
\(
I_m:X_{m,0}\longrightarrow W_{m,0}
\)
coordinatewise by
\[
I_m(x):=
\bigl(J_1(x^{(1)}),\ldots,J_1(x^{(m)})\bigr).
\]
Similarly define
\(
L_m:W_{m,0}\longrightarrow X_{m,0}
\)
by
\[
L_m(w):=
\bigl(K_1(w^{(1)}),\ldots,K_1(w^{(m)})\bigr).
\]
Then \(L_m\circ I_m=\mathrm{Id}_{X_{m,0}}\) and
\(I_m\circ L_m=\mathrm{Id}_{W_{m,0}}\). Moreover,
\[
(I_m)_*(\gamma_m^{\otimes\infty})=\mu_m.
\]

We next verify the distance identity. For \(x,y\in X_{m,0}\), under the above coordinate identification, the Cameron--Martin space
of \(X_m\) is \(H_{X_1}^{\times m}\), and hence
\[
d_{X_m}(x,y)^2
=
\sum_{j=1}^m d_{X_1}(x^{(j)},y^{(j)})^2.
\]
Similarly, since \(H_m=H_1^{\times m}\),
\[
d_{H_m}(I_m(x),I_m(y))^2
=
\sum_{j=1}^m
d_{H_1}\bigl(J_1(x^{(j)}),J_1(y^{(j)})\bigr)^2.
\]
By Proposition~\ref{p:AWS-prod},
\(
d_{H_1}\bigl(J_1(x^{(j)}),J_1(y^{(j)})\bigr)
=
d_{X_1}(x^{(j)},y^{(j)})
\)
for each \(j=1,\ldots,m\). Therefore
\[
d_{H_m}(I_m(x),I_m(y))=d_{X_m}(x,y)
\qquad
\forall x,y\in X_{m,0}.
\]

We now prove \(G\)-equivariance. Let
\(
t=(t_{ij})_{1\le i,j\le m}\in G\subset O(m).
\)
Since \(X_{1,0}\) is a vector subspace, \(X_{m,0}\) is \(G\)-invariant. For \(x\in X_{m,0}\),
\[
(tx)^{(i)}
=
\sum_{j=1}^m t_{ij}x^{(j)}\in X_{1,0}.
\]
Using the linearity of \(J_1\), we get
\[
J_1((tx)^{(i)})
=
\sum_{j=1}^m t_{ij}J_1(x^{(j)}).
\]
Hence
\(
I_m(tx)=tI_m(x)\) for 
every \(t\in G,\ x\in X_{m,0}.
\)

Let
\(
p_X:X_m\to G\backslash X_m\) and
\(p_W:W_m\to G\backslash W_m
\)
be the quotient maps. Define a Borel map
\(
\widehat I_m:X_m\longrightarrow G\backslash W_m
\)
by
\[
\widehat I_m(x):=
\begin{cases}
p_W(I_m(x)), & x\in X_{m,0},\\
[w_*], & x\notin X_{m,0},
\end{cases}
\]
where \(w_*\in W_m\) is fixed. The map \(\widehat I_m\) is Borel because \(X_{m,0}\) is Borel,
\(I_m:X_{m,0}\to W_m\) is Borel, and \(p_W\) is Borel. By the \(G\)-equivariance of \(I_m\),
the map \(\widehat I_m\) is \(G\)-invariant. Since a continuous action of a compact group on a
Polish space has a standard Borel orbit space, \(\widehat I_m\) factors through a Borel map
\(
\bar I_m:G\backslash X_m\longrightarrow G\backslash W_m
\)
such that
\[
\bar I_m\circ p_X=\widehat I_m.
\]

We check that \(\bar I_m\) is measure preserving. Since
\[
\mu_{G\backslash X_m}=(p_X)_*(\gamma_m^{\otimes\infty}),
\qquad
\mu_{G\backslash W_m}=(p_W)_*\mu_m,
\]
and since \(\gamma_m^{\otimes\infty}(X_{m,0})=1\), we have
\[
\begin{aligned}
(\bar I_m)_*\mu_{G\backslash X_m}
&=
(\bar I_m)_*(p_X)_*(\gamma_m^{\otimes\infty})\\
&=
(\widehat I_m)_*(\gamma_m^{\otimes\infty})\\
&=
(p_W)_*(I_m)_*(\gamma_m^{\otimes\infty})\\
&=
(p_W)_*\mu_m\\
&=
\mu_{G\backslash W_m}.
\end{aligned}
\]

Let
\(
A:=p_X(X_{m,0})\subset G\backslash X_m.
\)
Since \(X_{m,0}\) is Borel and \(p_X\) is Borel, the set \(A\) is analytic. Moreover,
\(
\mu_{G\backslash X_m}(A)=1.
\)
Choose a Borel set \(A_0\subset A\) with
\(
\mu_{G\backslash X_m}(A_0)=1.
\)
For \([x],[y]\in A_0\), choose representatives \(x,y\in X_{m,0}\). Then
\[
\begin{aligned}
d_{G\backslash W_m}(\bar I_m([x]),\bar I_m([y]))
&=
\inf_{t\in G}d_{H_m}(I_m(x),tI_m(y))\\
&=
\inf_{t\in G}d_{H_m}(I_m(x),I_m(ty))\\
&=
\inf_{t\in G}d_{X_m}(x,ty)\\
&=
d_{G\backslash X_m}([x],[y]).
\end{aligned}
\]
Here we used the \(G\)-equivariance of \(I_m\), the \(G\)-invariance of \(X_{m,0}\), and the
isometry property of \(I_m\) on \(X_{m,0}\). Thus \(\bar I_m\) is an isomorphism in the sense of
Definition~\ref{d:mmiso}.

Finally, since \((\mathbb R^m,d_{\mathbb R^m},\gamma_m)\) satisfies \(\RCD(1,\infty)\) and
\(G\subset O(m)\) is a compact subgroup, \(G\) is a compact Lie group. Hence
Corollary~\ref{c:SER} applies and \(G\backslash X_m\) satisfies \(\RCD(1,\infty)\) and
\(\EVI(1,\infty)\). The isomorphism just constructed, together with
Proposition~\ref{p:SI}, transfers these properties to \(G\backslash W_m\).
\end{proof}

\begin{rem} In \cite[Theorem~1.5]{FSS}, $\CD(1,\infty)$ has been proved for the Wiener space (the case~where $G=\{e\}$ is trivial), however,  even in this case, $\EVI(1,\infty)$ has not been explicitly proven in literature.
\end{rem}

\begin{rem}\normalfont 
In Shioya--Takatsu \cite[the third paragraph in~p.877]{ST}, they conjectured that any weak limit of compact homogeneous Riemannian manifolds is not mm-isomorphic (i.e., isomorphic as metric measure spaces) to the abstract Wiener space. In contrast, under the weaker notion of isomorphism in Definition~\ref{d:mmiso}, Theorem~\ref{t:WPE} identifies the infinite product Gaussian space and its compact orthogonal quotients with the corresponding classical Wiener path-space quotients. Consequently, Proposition~\ref{p:SI} transfers the synthetic lower Ricci curvature bounds and related functional inequalities to these spaces. 
\end{rem}

\subsection*{Acknowledgements}
The second author thanks Ryunosuke Ozawa for discussions related to this work.

\end{document}